\newtheorem{theorem}{Theorem}[section]
\newtheorem{prop}[theorem]{Proposition}
\newtheorem{prop*}{Proposition}
\newtheorem{lemma}[theorem]{Lemma}
\newtheorem{algo}[theorem]{Algorithm}
\newtheorem{cor*}{Corollary}
\theoremstyle{definition}
\newtheorem{defn}[theorem]{Definition}
\newtheorem{rem}[theorem]{Remark}
\newtheorem{cons}[theorem]{Construction}
\newtheorem{exmp}[theorem]{Example}
\newtheorem{exmp*}{Example}
\newcommand{\m}{\mathrm{Mod}(S_g)}
\newcommand{\Mod}{\mathrm{Mod}}
\newcommand{\Homeo}{\mathrm{Homeo}}
\newcommand{\lcm}{\mathrm{lcm}}
\renewcommand{\P}{\mathcal{P}}
\newcommand{\N}{\mathcal{N}}
\newcommand{\M}{\mathcal{M}}
\newcommand{\TM}{\widetilde{\mathcal{M}}}
\newcommand{\B}{\mathcal{B}}
\renewcommand{\O}{\mathcal{O}}
\newcommand{\C}{\mathcal{C}}
\newcommand{\D}{\mathcal{D}}
\newcommand{\F}{\mathcal{F}}
\newcommand{\T}{\mathcal{T}}
\newcommand{\W}{\mathcal{W}}
\newcommand{\Z}{\mathbb{Z}}
\renewcommand{\L}{\mathcal{L}}
\begin{document}
\title[Factoring periodic maps into Dehn twists]{Factoring periodic maps into Dehn twists}

\author[N. K. Dhanwani]{Neeraj K. Dhanwani}
\address{Department of Mathematics\\
Indian Institute of Science Education and Research Bhopal\\
Bhopal Bypass Road, Bhauri \\
Bhopal 462 066, Madhya Pradesh\\
India}
\email{neerajk.dhanwani@gmail.com}

\author[Ajay K. Nair]{Ajay K. Nair}
\address{Department of Mathematics\\
Indian Institute of Science\\
Bangalore - 560 012, Karnataka\\
India.}
\email{anir71@gmail.com}

\author[K. Rajeevsarathy]{Kashyap Rajeevsarathy}
\address{Department of Mathematics\\
Indian Institute of Science Education and Research Bhopal\\
Bhopal Bypass Road, Bhauri \\
Bhopal 462 066, Madhya Pradesh\\
India}
\email{kashyap@iiserb.ac.in}
\urladdr{https://home.iiserb.ac.in/$_{\widetilde{\phantom{n}}}$kashyap/}

\subjclass[2000]{Primary 57M60; Secondary 57M50, 57M99}

\keywords{surface; mapping class; periodic maps; Dehn twists}

\maketitle

\begin{abstract}
Let $\Mod(S_g)$ be the mapping class group of the closed orientable surface $S_g$ of genus $g \geq 1$. In this paper, we develop various methods for factoring periodic mapping classes into Dehn twists, up to conjugacy. As applications, we develop methods for factoring certain roots of Dehn twists as words in Dehn twists. We will also show the existence of conjugates of periodic maps of order $4g$ and $4g+2$, for $g\geq 2$, whose product is pseudo-Anosov.
\end{abstract}

\section{Introduction}

Let $S=S_{g,p}^b$ denote the orientable surface of genus $g \geq 0$ with $p \geq 0$ punctures and $b \geq 0$ boundary components, where we ignore the parameters $b$ or $p$ when they take the value zero. Let $\Mod(S)$ denote the mapping class group of $S$. Lickorish~\cite{WBRL} proved that $\m$ is generated by $3g-1$ Dehn twists about non-separating curves, and subsequently Humphries~\cite{SPH} showed that $\Mod(S_g)$ is generated by a minimal generating set comprising $2g+1$ Dehn twists about non-separating curves. Thus, it is a natural question to ask whether we can derive methods for representing an arbitrary periodic $F \in \Mod(S_g)$ as a word $\W(F)$ in Dehn twists, up to conjugacy. In this paper, we develop algorithms to write a word $\W(F)$ for arbitrary periodic mapping class $F\in\m$ in Dehn twists.

In their seminal paper~\cite{BH3}, Birman-Hilden derived an expression for $\W(F)$ when $F$ is of (largest possible) order $4g+2$ in $\Mod(S_g)$, and consequently for $F^{2g+1}$, the hyperelliptic involution. This problem was solved for involutions in $\Mod(S_2)$ by Matsumoto~\cite{MM}, which was later generalized to $g >2$ by Korkmaz~\cite{MK}. Using techniques in algebraic geometry, Hirose~\cite{SH} derived expressions for $\W(F)$ for every periodic $F \in \Mod(S_g)$, for $1 \leq g \leq 4$. However, these  specialized techniques of Hirose do not easily generalize for $g \geq 5$. In~\cite{PKS}, a method was described to decompose an arbitrary periodic mapping class $F \in \Mod(S_g)$ into irreducible components that are realized as rotations of certain canonical hyperbolic polygons with side-pairings. We use this decomposition to develop various methods in this paper for deriving $\W(F)$, for an arbitrary periodic mapping class $F \in \Mod(S_g)$. 

In Section~\ref{sec:rotation_word}, we provide a method for deriving $\W(F)$ when $F$ is realizable as a rotation of $S_g$. To begin with, we show that any non-free involution on $S_g$, for $g \geq 3$, can be decomposed into components, where each component is either a hyperelliptic involution on the torus or an involution in $S_2$ with two fixed points (We call these the \textit{fundamental involutions}). Using this decomposition and the Burkhardt \textit{handle swap} map~\cite{HB,MS1} (that swaps a pair of handles on $S_g$), we provide a method for factoring $F$ (into Dehn twists) when $F$ is non-free involution. Furthermore, we use the fact that any other surface rotation can be written as a product of at most two nonfree involutions, to factor all surface rotations (see Algorithm~\ref{algo:surf_rotn}). 

In Section~\ref{sec:chain_method}, we use the well known chain relation in $\Mod(S_g)$ to develop a method (that we call the \textit{chain method}) see~\ref{algo:chain_method}, for representing a large family of periodic mapping classes (that we will call \textit{chain-realizable} periodics) as words in Dehn twists. As an immediate application of the chain method, we represent the torsion elements in $\Mod(S_2)$ (up to conjugacy) as words in Dehn twists. In Section~\ref{sec:gen_star_method}, we apply a known generalization~\cite{NS,M00} of the standard star relation in $\Mod(S_1^3)$ to $g \geq 2$, to develop a method (that we call the \textit{star method}) for representing a even larger family of periodic mapping classes (that encompasses \textit{chain-realizable} periodics) that we call \textit{star-realizable} periodics, see algorithm~\ref{algo:star_methodFT}, as words in Dehn twists. For an $F \in \m$ that is neither rotational nor star-realizable, in Section~\ref{sec:symp_method}, we formulate a method for deriving $\W(F)$ that we call the \textit{symplectic method}, see algorithm~\ref{algo:symp_method}. As the name suggests, this method uses the symplectic representations of periodic maps as described in~\cite{PKS}.
 
  In Section~\ref{sec:mthd_appls}, we provide several applications of our methods. By applying the star and symplectic methods, we obtain representations for the torsion elements in $\Mod(S_3)$ (up to conjugacy) as words in Dehn twists.  When $c$ is a nonseparating curve in $S_g$, for $g \geq 2$, Margalit-Schleimer~\cite{MS} gave the first example of a nontrivial root of $T_c$ of degree $2g-1$. Furthermore, they derived an explicit factorization of this root into Dehn twists using the chain relation in $\Mod(S_g)$. By applying our star method and the theory developed in~\cite{MK1}, we provide an algorithm for factoring a root of $T_c$ of degree $2g-1$ that is non-conjugate to the Margalit-Schleimer root for $g \geq 3$. Consider the Lickorish generating set $\L_g = \{T_{a_1},T_{b_1},T_{c_1},T_{b_2},T_{a_2},T_{c_2}, \ldots,T_{c_{g-1}},T_{b_g},T_{a_g}\}$ for $\Mod(S_g)$ (where the curves $a_i$, $b_i$ and the $c_i$ are as indicated in Figure~\ref{fig:2_lick_gens}). In the same spirit, we apply results in~\cite{KR2} to derive a root of $T_{a_1}^{2g-2}$ of degree $4g-4$. Thus, we have the following.
 \begin{cor*}
For $g \geq 2$, consider the nonseparating curve $a_1$ in $S_g$.
\begin{enumerate}[(i)]
\item There exists a root of $T_{a_1}$ in $\Mod(S_g)$ of degree $2g-1$ which is nonconjugate to the Margalit-Schleimer root for $g \geq 3$ factoring into Dehn twists as $$T_{a_1}^{-1}(T_{c_1}T_{a_2}\prod_{i=2}^{g-1} (T_{b_i}T_{c_i})T_{b_g}T_{a_g})^2.$$
\item There exists a root of $T_{a_1}^{2g-2}$ of degree $4g-4$ in $\Mod(S_g)$ that factors into Dehn twists as 
$$T_{a_2}\prod_{i=1}^{g-1}(T_{c_i}T_{b_{i+1}}).$$
\end{enumerate}
\end{cor*}
\noindent When $c$ is separating curve in $S_g$ for $g \geq 2$, so that $S_g=S_{g_1}\#_cS_{g_2}$. It follows from the results in~\cite{KR1} that there exists a root of $T_c$ of degree $4g_2+2$. In this context, we  show that:
\begin{cor*}
For $g \geq 2$,  let $c$ be a separating curve in $S_g$ so that $S_g=S_{g_1}\#_cS_{g_2}$. Then there exists a root of $T_c$ in $\Mod(S_g)$ of degree $4g_2+2$ that factors into Dehn twists as
$$(\prod_{j=1}^{g_1}(T_{a_j}T_{b_j}T_{a_j'})^{2(-1)^{(g_1-j)}})(T_{a_{g_1+1}}^2(\prod_{i=g_1+1}^{g-1}T_{b_i}T_{c_i})T_{b_g}T_{a_g})^{g_2+1}T_{c}^{-1}.$$
\end{cor*}

\noindent Furthermore, we have provided a method for factoring the root of multitwist $\prod_{i=1}^g T_{c_i}$ in $\Mod(S_g)$. 

It is well known~\cite{H1,AW} that the largest two possible orders of a torsion element in $\Mod(S_g)$ are $4g+2$ and $4g$. In fact, when $n$ is $4g$ or $4g+2$, there exists an order-$n$ mapping class $W_n$ that is realized~\cite{PKS} as the $2\pi/n$ rotation of a regular hyperbolic $n$-gon with opposite sides identified. As a final application of our theory we show the following. 
\begin{prop*}
\label{prop1}
For $g \geq 2$, there exists conjugates $$W_{4g}'=(T_{b_1}^2\prod_{i=1}^{g-1}(T_{c_i}T_{b_{i+1}})T_{a_g}) \text{ and } W_{4g+2}'=(T_{a_1}\prod_{i=1}^{g-1}(T_{b_i}T_{c_i})T_{b_g})$$ of $W_{4g}$ and $W_{4g+2}$, respectively, such that $W_{4g}'W_{4g+2}'$ is pseudo-Anosov in $\Mod(S_g)$.
\end{prop*}

\noindent It may be noted that the first part of Proposition~\ref{prop1} was also shown by Ishizaka in~\cite{MI}.

\section{Preliminaries}\label{sec:prelim}
The primary purpose of this section is to summarize the theory developed in~\cite{BPR,PKS}, which lies at the foundation of the various methods that we will develop in this paper.
\subsection{Periodic mapping classes}\label{sec:finite_maps}
For $g \geq 1$, let $F \in \Mod(S_g)$ be of order $n$. By the Nielsen-Kerckhoff theorem~\cite{SK,JN},  $F$ is represented by a \textit{standard representative} $\F \in \Homeo^+(S_g)$ of order $n$. Let $\O_F := S_g/\langle \F \rangle$ be the \textit{corresponding orbifold} of $F$ of genus $g_0$ (say). Each cone point $x_i \in \O_F$ lifts under the branched cover $S_g \to S_g/\langle \F \rangle$ to an orbit of size $n/n_i$ on $S_g$, where the local rotation induced by $\F$ is given by $2 \pi c_i^{-1}/n_i$,  $c_i c_i^{-1} \equiv 1 \pmod{n_i}$. The tuple $\Gamma(\O_F) := (g_0; n_1,\ldots,n_{\ell})$, is called the \textit{signature of} $\O_F$. By the theory of group actions on surfaces (see~\cite{H1} and the references therein), we obtain an exact sequence: 
\begin{equation*}
\label{eq:surf_kern}
1 \rightarrow \pi_1(S_g) \rightarrow \pi_1^{orb}(\O_F) \xrightarrow{\Phi_F} \langle \F \rangle \rightarrow 1, 
\end{equation*}
\noindent where the orbifold fundamental group $\pi_1^{orb}(\O_F)$ of $\O_F$ has the presentation
\begin{equation*}
\label{eqn:orb-pres}
\left\langle \alpha_1,\beta_1,\dots,\alpha_{g_0},\beta_{g_0}, \xi_1,\dots,\xi_{\ell} \, |\, \xi_1^{n_1},\dots,\xi_\ell^{n_{\ell}},\,\prod_{j=1}^{\ell} \xi_j \prod_{i=1}^{g_0}[\alpha_i,\beta_i]\right\rangle
\end{equation*} 
and $\Phi_{F} (\xi _i) = \F^{(n/n_i)c_i}$, for $1 \leq i \leq \ell$. We will now define a tuple of integers that will encode the conjugacy class of a periodic mapping class $F \in \Mod(S_g)$ of order $n$ in $\Mod(S_g)$.  

\begin{defn}\label{defn:data_set}
A \textit{data set of degree $n$} is a tuple
$$
D = (n,g_0, r; (c_1,n_1),\ldots, (c_{\ell},n_{\ell})),
$$
where $n\geq 2$, $g_0 \geq 0$, and $0 \leq r \leq n-1$ are integers, and each $c_i \in \Z_{n_i}^\times$ such that:
\begin{enumerate}[(i)]
\item $r > 0$ if and only if $\ell = 0$ and $\gcd(r,n) = 1$, whenever $r >0$,
\item each $n_i\mid n$,
\item $\lcm(n_1,\ldots \widehat{n_i}, \ldots,n_{\ell}) = N$, for $1 \leq i \leq \ell$, where $N = n$, if $g_0 = 0$,  and
\item $\displaystyle \sum_{j=1}^{\ell} \frac{n}{n_j}c_j \equiv 0\pmod{n}$.
\end{enumerate}
The number $g$ determined by the Riemann-Hurwitz equation
\[\frac{2-2g}{n} = 2-2g_0 + \sum_{j=1}^{\ell} \left(\frac{1}{n_j} - 1 \right) \tag{R-H} \]
is called the {genus} of the data set, denoted by $g(D)$.
\end{defn}

\noindent The following proposition, which allows us to use data sets to represent the conjugacy classes of cyclic actions on $S_g$, is well known in literature (see~\cite{AI,SH}, and~\cite[Theorem 3.89]{KP}). However, it follows mainly from a result of Nielsen~\cite{JN1}.

\begin{prop}\label{prop:ds-action}
For $g \geq 1$ and $n \geq 2$, data sets of degree $n$ and genus $g$ correspond to conjugacy classes of $\Z_n$-actions on $S_g$. 
\end{prop}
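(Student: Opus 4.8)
The plan is to read this off as a dictionary between the combinatorial data of a data set and the branching data of a surjection $\pi_1^{orb}(\O) \twoheadrightarrow \Z_n$ with surface-group kernel, and then to invoke the classification of \cite[Theorem 3.8]{KP} together with the theory of \cite{H1}. First I would fix the quotient orbifold $\O$ of signature $(g_0; n_1, \ldots, n_\ell)$ and recall its standard orbifold-fundamental-group presentation, with hyperbolic generators $a_1, b_1, \ldots, a_{g_0}, b_{g_0}$ and elliptic generators $\xi_1, \ldots, \xi_\ell$ subject to $\xi_i^{n_i} = 1$ and the long relation $\prod_{j=1}^{g_0}[a_j, b_j]\prod_{i=1}^{\ell}\xi_i = 1$. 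By the exact sequence preceding Definition~\ref{defn:data_set}, a $\Z_n$-action on $S_g$ with quotient $\O$ corresponds to a surjection $\phi\colon \pi_1^{orb}(\O) \twoheadrightarrow \Z_n$ whose kernel is torsion-free, hence a surface group $\pi_1(S_g)$.

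Given a data set $D = (n, g_0, r; (c_1, n_1), \ldots, (c_\ell, n_\ell))$, I would build $\phi$ by setting $\phi(\xi_i) = (n/n_i)c_i \in \Z_n$ (and, when $g_0 > 0$, choosing the images of the hyperbolic generators so that $\phi$ is onto). This is well defined precisely because $n_i \mid n$ and $c_i \in \Z_{n_i}^*$: a short computation gives that $(n/n_i)c_i$ has order exactly $n_i$, so $\xi_i^{n_i}$ dies, and the same order computation shows $\phi$ is injective on each $\langle \xi_i\rangle$, which is exactly the statement that the kernel contains no conjugate of a nontrivial power of $\xi_i$, i.e. that the kernel is torsion-free. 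The long relation survives because the commutators vanish in the abelian group $\Z_n$ and $\sum_i (n/n_i)c_i \equiv 0 \pmod n$ by condition (iv). When $g_0 = 0$ the realizability condition (iii) forces the elements $(n/n_i)c_i$ to generate $\Z_n$ (giving a connected cover of the correct genus), while the free case $\ell = 0$ is handled by the parameter $r$ with $\gcd(r,n) = 1$ of condition (i). The Riemann existence theorem then produces a genuine branched cover, and the Riemann--Hurwitz equation pins its genus to $g(D) = g$. Running the construction in reverse recovers $D$ from an action: the quotient orbifold supplies $g_0$ and the $n_i$, and the local rotation data $2\pi c_i^{-1}/n_i$ supplies the $c_i$.

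What remains, and where the real content lies, is the passage to \emph{equivalence classes}: I must check that two data sets coincide if and only if the associated actions are conjugate, and it is here that I would lean hardest on \cite[Theorem 3.8]{KP} and \cite{H1}. The subtlety is that a data set records only the signature and the rotation numbers (plus $r$ in the free case), but not the images of the hyperbolic generators $a_j, b_j$; one must argue that these images can be normalized away, so that the conjugacy class of the action is determined by the branching data alone. For the abelian group $\Z_n$ this normalization is exactly what the mapping-class-group action on the orbifold generators provides, and it is encoded in the equivalence relation underlying \cite[Theorem 3.8]{KP}. I expect this bookkeeping---matching my explicit $\phi$ to the equivalence classes of that theorem and verifying that conditions (i)--(iv) are jointly necessary and sufficient for a valid $\Z_n$-action---to be the main obstacle, the preceding realization steps being essentially forced once the presentation is in hand.
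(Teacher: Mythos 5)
Your proposal is correct and follows essentially the same route as the paper: the paper gives no argument beyond citing \cite[Theorem 3.8]{KP} and \cite{H1}, and your sketch is precisely the standard Harvey-type realization argument underlying those references, deferring to them (as the paper does) for the key step that the branching data alone determines the conjugacy class. Nothing further is needed.
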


\noindent We will denote the data set encoding the conjugacy class of a periodic mapping class $F$ by $D_F$. The parameter $r$ (in Definition~\ref{defn:data_set}) will come into play in a data set $D_F$ only when $\F$ is a free rotation of $S_g$ by $2\pi r/n$, in which case, $D_F$ will take the form $(n,g_0,r;)$. We will avoid including $r$ in the notation of a data set $D_F$, whenever $\F$ is non-free.  Furthermore, for compactness of notation, we also write a data set $D$ (as in Definition~\ref{defn:data_set}) as
$$D = (n,g_0,r; ((d_1,m_1),\alpha_1),\ldots,((d_{\ell'},m_{\ell'}),\alpha_{\ell'})),$$
where $(d_i,m_i)$ are the distinct pairs in the multiset $S = \{(c_1,n_1),\ldots,(c_{\ell},n_{\ell})\}$, and the $\alpha_i$ denote the multiplicity of the pair $(d_i,m_i)$ in the multiset $S = \{(c_1,n_1),\ldots,(c_{\ell},n_{\ell})\}$. For simplicity of notation, we shall avoid writing the parameters $\alpha_i$ when they equal $1$. 

Let $F \in \Mod(S_g)$ be of order $n$. Then $F$ is said to be \textit{rotational} if $\F$ is a rotation of the $S_g$ through an axis by $2 \pi r/n$, where $\gcd(r,n)=1$. It is apparent that $\F$ is either has no fixed points, or $2k$ fixed points which are induced at the points of intersection of the axis of rotation with $S_g$. Moreover, these fixed points will form $k$ pairs of points $(x_i,x_i')$, for $1 \leq i \leq k$, such that the sum of the angles of rotation induced by $\F$ around $x_i$ and  $x_i'$ add up to $0$ modulo $2\pi$. Consequently, we have the following: 
\begin{prop}
\label{prop:rotl_actn}
Let $F \in \Mod(S_g)$ be a rotational mapping class of order $n$. 
\begin{enumerate}[(i)]
\item When $\F$ is a non-free rotation, then $D_F$ has the form
 $$(n,g_0;\underbrace{(s,n),(n-s,n),\ldots,(s,n),(n-s,n)}_{k \,pairs}),$$ for integers $k \geq 1$ and $0<s\leq n-1$ with $\gcd(s,n)= 1$, and $k=1$ if $n>2$.  
 \item When $\F$ is a free rotation, then $D_F$ has the form
 $$(n,\frac{g-1}{n}+1,r;).$$
\end{enumerate}
\end{prop}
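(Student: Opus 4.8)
The plan is to read the data set $D_F$ directly off the branched cover $S_g \to \O_F = S_g/\langle\F\rangle$ in each case, feeding the ramification data into the Riemann--Hurwitz relation (R-H) of Definition~\ref{defn:data_set}. I would split the free and non-free cases at the outset, since a free rotation gives an unramified cover (so $\ell = 0$), whereas a non-free rotation records ramification exactly at the poles where the axis meets $S_g$.

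For the free case (ii), $\F$ has no fixed points, so $\O_F$ is a closed surface of genus $g_0$ with no cone points ($\ell = 0$) and $S_g \to \O_F$ is a genuine $n$-fold cover. Then (R-H) collapses to $\tfrac{2-2g}{n} = 2 - 2g_0$, which I would solve for $g_0$ to obtain $g_0 = \tfrac{g-1}{n}+1$. Because $\ell = 0$, part~(i) of Definition~\ref{defn:data_set} brings the rotation parameter $r$ into play with $\gcd(r,n)=1$, giving $D_F = (n,\tfrac{g-1}{n}+1,r;)$, as asserted.

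For the non-free case (i), I would first note that any fixed point of $\F$ is fixed by every power of $\F$, since a nontrivial rotation about an axis fixes that axis pointwise; hence each of the $2k$ fixed points is its own orbit and contributes a cone point with $n_i = n$, so $\ell = 2k$. Next I would compute the local rotation at each pole: at a pole where the surface normal points along the axis, $\F$ acts on the tangent plane by $+2\pi r/n$, while at the paired antipodal pole the induced orientation is reversed and $\F$ acts by $-2\pi r/n$. Comparing with the normal form $2\pi c_i^{-1}/n_i$ recalled in Section~\ref{sec:finite_maps} gives $c_i \equiv s := r^{-1} \pmod n$ at the first type of pole and $c_i \equiv n-s$ at its partner; since $\gcd(r,n)=1$ we get $\gcd(s,n)=1$ and $0 < s \le n-1$. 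Grouping the $2k$ poles into the $k$ pairs $(x_i,x_i')$ from the discussion above then produces exactly the stated block form, and I would check that Definition~\ref{defn:data_set}(iv) holds automatically, since each pair contributes $s+(n-s)\equiv 0 \pmod n$.

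The main obstacle is the final claim that $k=1$ whenever $n>2$. Neither (R-H) nor the Lefschetz count settles it: every nontrivial power $\F^j$ has the same fixed set with index $+1$ at each pole, so $L(\F^j)=2k$ for all $j$, which merely reproduces the relation between $k$, $g$, and $g_0$. The constraint must therefore be geometric, and I would exploit that $\F$ is a \emph{rigid} rotation with a single axis. For $n=2$ the two local angles coincide ($s=n-s=1$), so no orientation splitting occurs and $k$ is genuinely unconstrained, matching the large fixed-point sets of involutions such as the hyperelliptic one; but for $n>2$ the angles $+2\pi r/n$ and $-2\pi r/n$ are distinct, splitting the poles into two orientation types, and I would attempt to show that the invariance of the connected surface $S_g$ under the full rotation, together with the way the axis threads $S_g$ between consecutive poles, forces exactly one pole of each type. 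I expect the bulk of the work to lie precisely in ruling out single-axis configurations with more than one pair for order greater than two, and this is the step I would scrutinize most carefully.
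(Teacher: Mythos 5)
Your handling of the free case (Riemann--Hurwitz giving $g_0=\tfrac{g-1}{n}+1$) and of the pairing structure in the non-free case (fixed points at the intersections of the axis with $S_g$, paired with opposite local rotation angles, hence pairs $(s,n),(n-s,n)$) is exactly the content the paper relies on: the proposition is stated there as an immediate consequence of the preceding paragraph, with no separate proof given. Up to that point your argument is fine and, if anything, more explicit than the paper's.

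The genuine gap is precisely where you located it: the assertion that $k=1$ whenever $n>2$. You leave this step as something you ``would attempt to show,'' so the proposal is incomplete as a proof; but the more serious problem is that the step would fail, because the assertion is not true for arbitrary rigid rotations under the paper's stated definition of rotational. Stack two round spheres along the $z$-axis and join them by $n$ tubes placed symmetrically around the axis; the result is an embedded copy of $S_{n-1}$ invariant under the rotation by $2\pi/n$, and the axis meets the surface in four points, giving $k=2$ with data set $(n,0;(1,n),(n-1,n),(1,n),(n-1,n))$. For $n=3$ this is the genus-two class $(3,0;((1,3),2),((2,3),2))$ appearing in the paper's own Table~\ref{tab:genus2_words} (note also that no order-$3$ rotation of $S_2$ with $k=1$ or with no fixed points is consistent with Riemann--Hurwitz, so the proposition as stated would leave this rotation unaccounted for). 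Consequently no analysis of how the axis threads $S_g$ can force $k=1$ for $n>2$ unless one first restricts to the flower-type embeddings of Figure~\ref{fig:two_inv_rot}, in which a central body meets the axis exactly twice and everything else is cyclically permuted---evidently what the authors intend, but not part of their definition. Your instinct to scrutinize this step was sound; the fix is an added hypothesis (or dropping the $k=1$ clause), not a cleverer argument.
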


\noindent We say $F$ is of \textit{Type 1} if $\Gamma(\O_F)$ has the form $(g_0; n_1,n_2,n)$, and $F$ is said to be of \textit{Type 2} if $F$ is neither rotational nor of Type 1. Gilman~\cite{G1} showed that a periodic mapping class $F \in \Mod(S_g)$ is irreducible if and only if $\O_F$ is a sphere with three cone points. Thus, $F$ is an irreducible Type 1 mapping class if and only if $\Gamma(\O_F)$ has the form $(0; n_1,n_2,n)$.

\subsection{Decomposing periodic maps into irreducibles} 
\label{sec:pri_into_irred}
In ~\cite{BPR,PKS}, a method was described to construct an arbitrary non-rotational periodic element $F \in \Mod(S_g)$, for $g \geq 2$, by performing certain``compatibilties" on irreducible Type 1 actions. These Type 1 actions are in turn realized as rotations of certain unique hyperbolic polygons with side-pairings. 
\begin{theorem}[{\cite[Theorem 2.7]{PKS}}]
\label{res:1}
For $g \geq 2$, consider an irreducible Type 1 action $F \in {\Mod}(S_g)$ with $$D_F = (n,0; (c_1,n_1),\linebreak (c_2,n_2), (c_3,n)).$$ Then $F$ can be realized explicitly as the rotation $\theta_F= 2\pi c_3^{-1}/n$ of a hyperbolic polygon $\P_F$ with a suitable side-pairing $W(\P_F)$, where $\P_F$ is a hyperbolic  $k(F)$-gon with
$$ k(F) := \begin{cases}
2n, & \text { if } n_1,n_2 \neq 2, \text{ and } \\
n, & \text{otherwise, }
\end{cases}$$
and for $0 \leq m\leq n-1$, 
$$ 
W(\P_F) =
\begin{cases}
\displaystyle  
  \prod_{i=1}^{n} a_{2i-1} a_{2i} \text{ with } a_{2m+1}^{-1}\sim a_{2z}, & \text{if } k(F) = 2n, \text{ and } \\
\displaystyle
 \prod_{i=1}^{n} a_{i} \text{ with } a_{m+1}^{-1}\sim a_{z}, & \text{otherwise,}
\end{cases}$$
where $\displaystyle z \equiv m+qj \pmod{n}$ with $q= (n/n_2)c_3^{-1}$ and $j=n_{2}-c_{2}$.
\end{theorem}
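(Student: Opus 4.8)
The plan is to build $S_g$ directly as the quotient of the $k(F)$-gon $\P_F$ by the side-pairing $W(\P_F)$, to place the unique $\F$-fixed point lying over the order-$n$ cone point $x_3$ at the center, and to exhibit $F$ as the rotation through $\theta_F$ about that center. The angle $\theta_F = 2\pi c_3^{-1}/n$ is dictated by the data set: $x_3$ is exactly where the rotation axis meets $S_g$, and the local rotation $\F$ induces there is $2\pi c_3^{-1}/n$. I would first carry out the generic case $n_1,n_2\neq 2$, where $k(F)=2n$, and then indicate the modification forced by a cone point of order $2$.

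First I would show that the rotation $\rho$ through $\theta_F$ descends to the quotient. Writing the sides as $a_1,\dots,a_{2n}$ in cyclic order, each subtends $\pi/n$ at the center, so $\rho$ shifts indices by $2c_3^{-1}$; on the pairing data this is $(m,z)\mapsto(m+c_3^{-1},z+c_3^{-1})$. Since the defining relation $z\equiv m+qj\pmod n$ is invariant under this simultaneous shift, $\rho$ carries identified sides to identified sides and so induces a homeomorphism $F$ of the quotient; as $\rho$ generates a cyclic group of order $n$ (because $\gcd(c_3^{-1},n)=1$), so does $F$.

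The heart of the proof is to read the branch data off the identifications and match it to $D_F$. Tracking endpoints of $a_{2m+1}^{-1}\sim a_{2z}$ gives $v_{2m}\sim v_{2z}$ and $v_{2m+1}\sim v_{2z-1}$, so on $\Z_n$ the even vertices are permuted by $m\mapsto m+qj$ and the odd ones by $m\mapsto m+qj-1$. Substituting $q=(n/n_2)c_3^{-1}$, $j=n_2-c_2$, and invoking the data-set congruence $\sum_{i}(n/n_i)c_i\equiv 0\pmod n$, one finds $qj\equiv-(n/n_2)c_2c_3^{-1}$ and $qj-1\equiv(n/n_1)c_1c_3^{-1}\pmod n$, whence $\gcd(qj,n)=n/n_2$ and $\gcd(qj-1,n)=n/n_1$. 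Thus the $2n$ corners collapse into $n/n_2$ even-classes (each of size $n_2$) and $n/n_1$ odd-classes (each of size $n_1$); choosing a $\langle\rho\rangle$-symmetric equilateral hyperbolic $2n$-gon whose alternating interior angles are $2\pi/n_2$ and $2\pi/n_1$ then makes every vertex-class smooth, so the quotient is a closed surface whose genus is $g(D)$ by Riemann--Hurwitz. The even and odd orbits are the preimages of $x_2$ and $x_1$, of the correct sizes $n/n_2$ and $n/n_1$, and a local-rotation computation at one vertex of each orbit recovers the rotation numbers $c_2$ and $c_1$; together with the central fixed point this shows $F$ realizes $D_F$.

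The step I expect to be the real obstacle is this last one: confirming that the offset $qj$ yields the exact rotation numbers $c_1,c_2$ at the two vertex orbits, and not merely the correct orders $n_1,n_2$. This is precisely where the combination $q=(n/n_2)c_3^{-1}$, $j=n_2-c_2$ must be reconciled with the monodromy $\phi_F(\xi_i)=\F^{(n/n_i)c_i}$ and the relation $\xi_1\xi_2\xi_3=1$ in $\pi_1^{orb}(\O_F)$, and it is the one place where the choice $\theta_F=2\pi c_3^{-1}/n$ (rather than $2\pi/n$) is essential. For $n_1=2$ or $n_2=2$, the order-$2$ cone point supplies a half-turn about a side-midpoint that already lies in $\langle\rho\rangle$; this extra involution folds the $2n$-gon onto an $n$-gon with a single family of sides $a_1,\dots,a_n$, and repeating the endpoint analysis with $a_{m+1}^{-1}\sim a_z$ produces the reduced word $\prod_{i=1}^{n} a_i$, which I would check returns the same signature.
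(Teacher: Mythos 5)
The paper does not actually prove Theorem 2.3; it is imported verbatim from \cite{BPR,PKS}, where the polygon is obtained from uniformization: one writes $\O_F=\h/\Gamma$ for the triangle group $\Gamma=\Delta(n_1,n_2,n)$, takes $S_g=\h/K$ with $K=\ker\phi_F$, and assembles $\P_F$ from $n$ translates of a fundamental domain of $\Gamma$ around a fixed point of the order-$n$ elliptic element, reading the side-pairing off the $\Gamma$-action. Your route is the synthetic converse: build a $\langle\rho\rangle$-symmetric hyperbolic $2n$-gon with alternating angles $2\pi/n_1,2\pi/n_2$ (which exists precisely because $1/n_1+1/n_2+1/n<1$, i.e.\ $g\geq 2$), impose the stated pairing, and verify the quotient realizes $D_F$. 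That approach works, and the computations you do carry out are right: the pairing is genuinely $\rho$-invariant, the vertex identifications are $m\mapsto m+qj$ on even and $m\mapsto m+qj-1$ on odd indices, and the congruence $\sum_i (n/n_i)c_i\equiv 0$ gives $qj\equiv -(n/n_2)c_2c_3^{-1}$ and $qj-1\equiv(n/n_1)c_1c_3^{-1}$, hence class sizes $n_2$ and $n_1$. The step you flag as the obstacle does close: tracing the counterclockwise wedge order around an even vertex class gives wedges indexed by $k\mapsto v_{2(m-kqj)}$, and $\rho^{n/n_2}$ advances this index by $q(qj)^{-1}\cdot(-1)\equiv c_2^{-1}\pmod{n_2}$, i.e.\ local rotation $2\pi c_2^{-1}/n_2$; the analogous computation at odd vertices yields $2\pi c_1^{-1}/n_1$, matching the convention $\phi_F(\xi_i)=\F^{(n/n_i)c_i}$. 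The uniformization proof buys you the data set for free but must compute the pairing; yours fixes the pairing and must verify the data set, which is the computation above.

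One caution: your description of the case $n_1=2$ or $n_2=2$ as the half-turn ``folding the $2n$-gon onto an $n$-gon'' is the wrong mental model if taken literally, since quotienting $\P_F$ by an involution in $\langle\rho\rangle$ would produce $S_g/\langle\rho^{n/2}\rangle$ rather than $S_g$. What actually happens is that the vertex classes of order $2$ force interior angle $\pi$ there, so those corners straighten and the $2n$-gon degenerates to an $n$-gon whose side-midpoints become the $n/2$ preimages of the order-$2$ cone point; the endpoint analysis for $a_{m+1}^{-1}\sim a_z$ then has to be redone from scratch on the $n$-gon (note also that there the two endpoint identifications generate a larger translation subgroup, so the vertex count differs from the naive halving). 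You defer this verification, so it should be written out, but it is routine and does not threaten the argument.
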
 

\noindent Consequently, the process yielded a concrete construction of a hyperbolic metric invariant under the cyclic action. Further, it was shown that the process of decomposition can be reversed by piecing together the irreducible Type 1 components (described in Theorem~\ref{res:1}) using the methods that we will now describe in Constructions~\ref{cons:k-comp} and \ref{cons:perm_add}. 

\begin{cons}[$k$-compatibility]
\label{cons:k-comp}
For $i=1,2$, let $F_i \in \Mod(S_{g_i})$ be of order $n$. Suppose that the actions of $\langle \F_i \rangle$ on the $S_{g_i}$  induce a pair of orbits $O_i$ such that $|O_1| = |O_2|$ and the rotation angles induced by the $\langle \F_i \rangle$-action around points in the $O_i$ add up to $0 \pmod{2\pi}$. Then we remove (cyclically permuted) $\langle \F_i \rangle$-invariant disks around points in the $O_i$ and then attach $k$-annuli $A_i$ connecting the resulting boundary components, to obtain an $F \in \Mod(S_{g})$ of order $n$, where $g(F):=g = g_1+g_2+k-1$. This method of constructing $F$ is called a \textit{k-compatibility}, and we say that \textit{$F$ is realizable as a $k$-compatible pair $(F_1,F_2)$ of genus $g(F)$.} Further, we denote $A(F) := \sqcup_{i=1}^k A_i$. A typical $1$-compatibility between irreducible Type 1 maps is illustrated in Figure~\ref{fig:Fix_glue} below. (For a visualization of a $k$-compatibility for $k\geq2$, see Figure~\ref{fig:chain_connect}.) 
 
\begin{figure}[H]
		\centering
	\includegraphics[width=20ex]{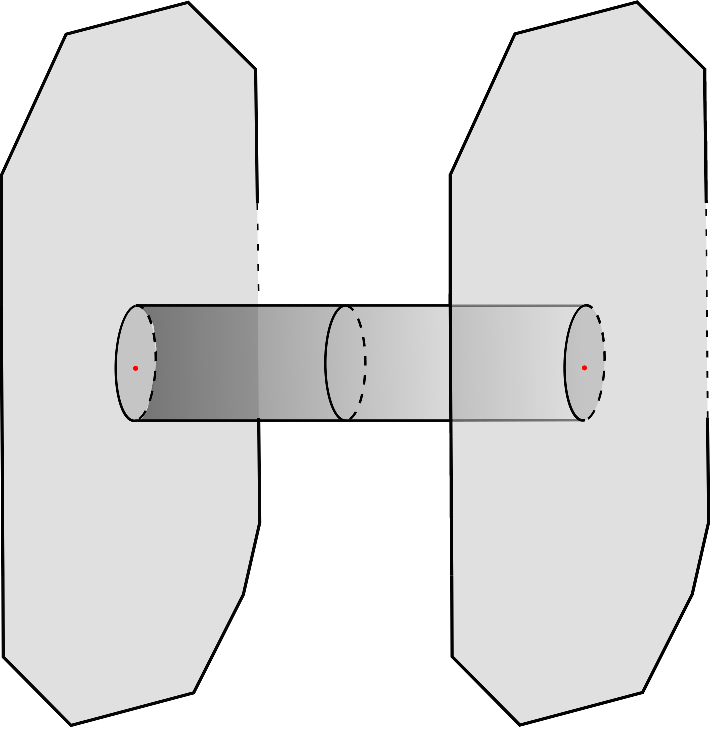}
	\caption{A $1$-compatibility of a pair of irreducible Type 1 maps.}
	\label{fig:Fix_glue}

\end{figure}

\noindent Let $\Sigma_i(F) := \overline{S_{g_i} \setminus A(F)}$. Then by construction, the maps $\F \vert_{\Sigma_i(F)}$ and $\F \vert_{A(F)}$ commute with each other.

If in the construction above, the orbits $O_i$ are induced by a single action on a surface $S_g$, then the method is called a \textit{self k-compatibility}, wherein the resultant action is on $S_{g+k}$. 
\end{cons}

\noindent Generalizing the ideas in Construction~\ref{cons:k-comp}, we have the following. 
\begin{defn}
\label{defn:compk_tuple}
Let $F \in \Mod(S_g)$ be of order $n$. We say $F$ is a \textit{linear $s$-tuple $(F_1,F_2,\ldots, F_s)$ of degree $n$ and genus $g$} if for $1 \leq i \leq s$, there exists $F_i \in \Mod(S_{g_i})$ of order $n$ satisfying the following conditions.
\begin{enumerate}[(i)]
\item $F_{i,i+1}:=(F_i,F_{i+1})$ is realizable as a $k_i$-compatible pair of genus $g(F_{i,i+1})$, for $1 \leq i \leq s-1$.
\item Let 
$$\Sigma_i(F):= 
\begin{cases}
\overline{S_{g_i} \setminus A(F_{i,i+1})}, & \text{if }i=1, \\
\overline{S_{g_i} \setminus A(F_{i-1,i})}, & \text{if }i=s, \text{ and } \\
\overline{S_{g_i} \setminus (A(F_{i-1,i}) \sqcup A(F_{i,i+1}))}, & \text{for } 2 \leq i \leq s-1.
\end{cases}$$
\noindent Then $\F \vert_{\Sigma_i(F)} = \F_i \vert_{\Sigma_i(F)}$, for $ 1 \leq i \leq s$.
\item $S_g = \sqcup_{i=1}^{s} \Sigma_i(F)\sqcup_{i=1}^{s-1} A(F_{i,i+1}),$ where  $$g = \sum_{i=1}^s g_i + \sum_{i=1}^{s-1}(k_i-1).$$
\end{enumerate} 
\end{defn}

\noindent Given a linear $s$-tuple $F=(F_1,F_2,\ldots, F_s)$ as in Definition~\ref{defn:compk_tuple}, we denote $g(F) :=g$, and further, we fix the following notation that for $1 \leq i < j -1 < s$, we denote  
$$F_{i,j} := (F_i,F_{i+1}, \ldots, F_j) \text{ and } \Sigma_{i,j}(F) := \displaystyle \cup_{k=i}^j \Sigma_k(F).$$

\begin{cons}[Permutation additions and deletions]
\label{cons:perm_add}
 The \textit{addition of a $g'$-permutation component} to a periodic map $\F$ is a process that involves the removal of (cyclically permuted) invariant disks around points in an orbit of size $n$ and then pasting $n$ copies of $S_{g'}^1$ (i.e. $S_{g'}$ with one boundary component) to the resultant boundary components. This realizes an action on $S_{g+ng'}$ with the same fixed point and orbit data as $\F$. A visualization of a permutation addition to an irreducible Type 1 map is shown in Figure~\ref{fig:perm_add} below.
 \begin{figure}[H]
	\includegraphics[width=30ex]{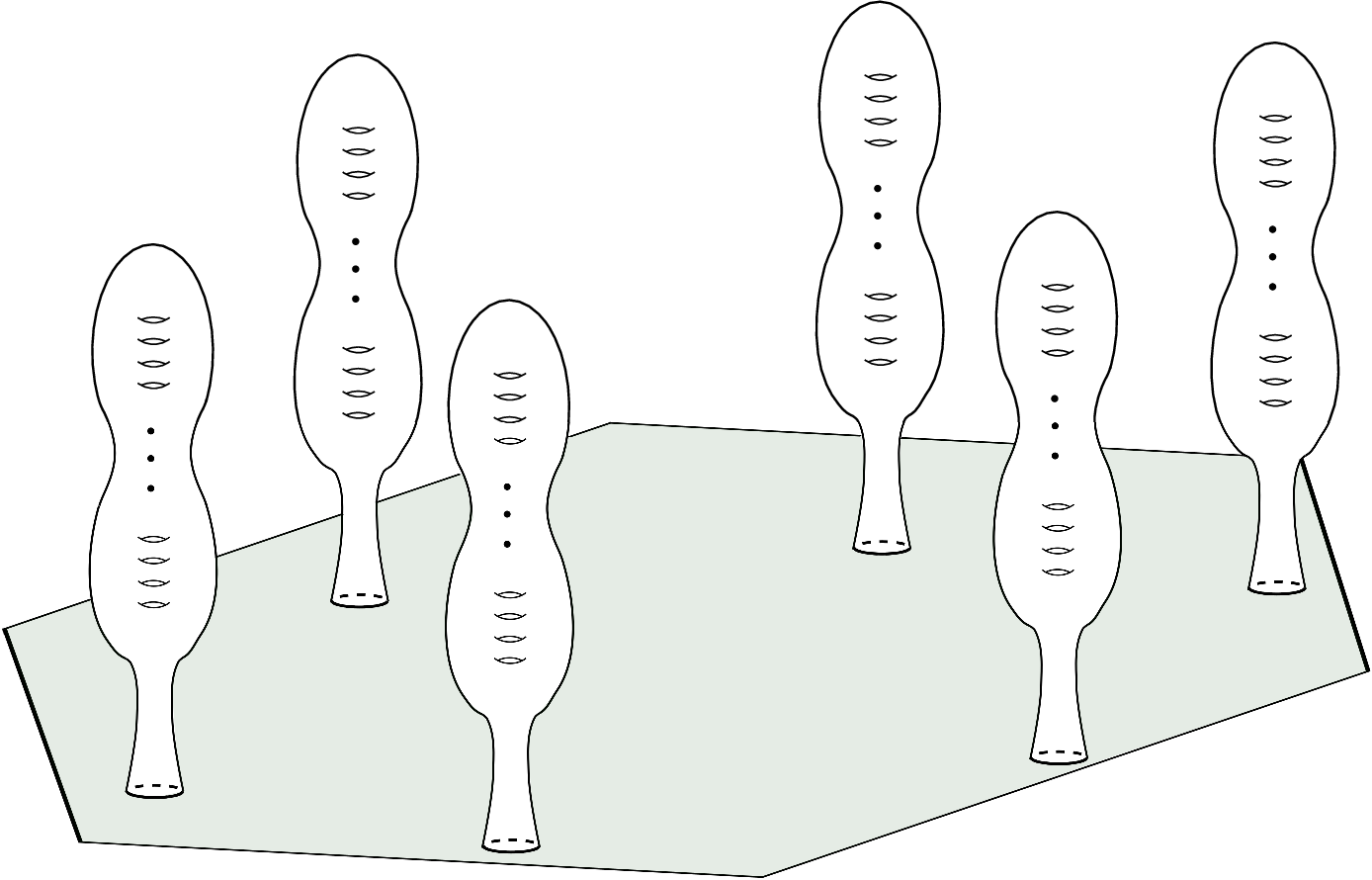}
	\caption{Addition of a $g'$-permutation component to an irreducible Type 1 map.}
	\label{fig:perm_add}
\end{figure}
\noindent The reversal of this process, wherein a $g'$-permutation is removed from $\F$ (when possible), is called the \textit{deletion of $g'$-permutation component.}
\end{cons}
 
\noindent The upshot of the discussion above and~\cite[Lemma 2.23]{PKS} is the following:
\begin{theorem}[{\cite[Theorem 2.24]{PKS}}]
\label{res:2}
For $g \geq 2$, an arbitrary non-rotational periodic mapping class in $\Mod(S_g)$ can be constructed through finitely many $k$-compatibilities, permutation additions, and permutation deletions on irreducible Type 1 mapping classes. 
\end{theorem}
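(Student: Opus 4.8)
The plan is to pass to the quotient orbifold and argue combinatorially. By Proposition~\ref{prop:ds-action} the conjugacy class of $F$ is encoded by its data set, equivalently by the signature $\Gamma(\O_F)=(g_0;n_1,\dots,n_\ell)$ together with the rotation numbers $c_i$, so it suffices to realize the three operations as moves on this data. I would induct on the lexicographically ordered complexity $(g_0,\ell)$, with the goal of peeling $F$ down to irreducible pieces, for which $\O_F$ is a thrice-punctured sphere by Gilman's criterion.

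First I would clear the base genus. Realizing the genus-$g_0$ underlying surface of $\O_F$ as a sphere with $g_0$ handles attached along regular points, these handles pull back under $S_g\to\O_F$ to a freely and cyclically permuted $\langle\F\rangle$-orbit of $n$ one-holed subsurfaces, i.e. exactly a permutation component in the sense of Construction~\ref{cons:perm_add}. Deleting these reduces to the case $g_0=0$ without changing $n$ or the fixed-point data, and $F$ is recovered from the reduced map by the inverse permutation additions. Here the point to check is that the handles may be taken over regular (cone-order-$1$) orbits, so that the deletions are valid and the genus bookkeeping $g=\sum g_i+\sum(k_i-1)$ of Definition~\ref{defn:compk_tuple} is respected.

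With $g_0=0$ the orbifold is a sphere with $\ell$ cone points and I would induct on $\ell$. The base case $\ell=3$ gives an irreducible map $\O_F=(0;n_1,n_2,n_3)$, which when some $n_i=n$ is an irreducible Type~1 map realized explicitly by Theorem~\ref{res:1}. For $\ell>3$ I would choose an embedded curve $\gamma\subset\O_F$ cutting off a disk that contains exactly two cone points $x_i,x_j$; its preimage is an $\langle\F\rangle$-invariant family of $k=n/m$ parallel curves, where $m$ is the order of the boundary monodromy, and the cut creates a new cone point of order $m$. This exhibits $F$ as a $k$-compatible pair $(F_1,F_2)$ in the sense of Construction~\ref{cons:k-comp}, with $F_1$ supported on the $(0;n_i,n_j,m)$ piece and $F_2$ on the residual sphere with $\ell-1$ cone points. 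Since $F_2$ has strictly smaller complexity, the induction expresses it through the three operations on irreducible Type~1 pieces, and reattaching $F_1$ by the $k$-compatibility assembles $F$ as a tuple obtained by iterated $k$-compatibilities (a linear $s$-tuple when the decomposition is linear, in the sense of Definition~\ref{defn:compk_tuple}).

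The main obstacle is the number-theoretic content of the peeling step: one must choose the enclosed pair $(x_i,x_j)$ so that the split-off three-cone-point piece is genuinely Type~1, i.e. so that either some enclosed cone order equals $n$ or the combined local rotation number $\tfrac{n}{n_i}c_i+\tfrac{n}{n_j}c_j$ is a unit modulo $n$ (forcing $m=n$ and $k=1$), and simultaneously so that the residual orbifold still satisfies the divisibility and $\lcm$ conditions (ii)--(iv) of Definition~\ref{defn:data_set} required of a degree-$n$ data set. Tracking the constraint $\sum_j\tfrac{n}{n_j}c_j\equiv0\pmod n$ as cone points are merged, and showing that an order-$n$ cone can always be produced or preserved along some admissible curve, is the crux. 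The genuinely delicate case is an irreducible Type~2 core $(0;n_1,n_2,n_3)$ with all $n_i<n$, which admits no further invariant splitting; isolating the combinatorial conditions under which the decomposition avoids such cores, and handling them when they arise, is where I expect the real work to lie.
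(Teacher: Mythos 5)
First, a point of reference: the paper does not actually prove this theorem --- it is imported wholesale from \cite{BPR,PKS}, with Theorem~\ref{res:1} and Constructions~\ref{cons:k-comp}--\ref{cons:perm_add} presented as ``the discussion above.'' So there is no in-paper argument to compare against; what can be assessed is whether your plan would close into a proof. It would not, for two concrete reasons. The first is your genus-clearing step. Deleting the handles of $\O_F$ as permutation components requires the epimorphism $\pi_1^{orb}(\O_F)\to\Z_n$ to vanish on the handle generators, which is possible only when the cone-point monodromies already generate $\Z_n$, i.e.\ when $\lcm(n_1,\dots,n_\ell)=n$. When this fails the handles necessarily carry monodromy and their preimages are \emph{connected} subsurfaces with several boundary circles, not $n$ disjoint copies of $S_{g'}^1$. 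A concrete instance is $D_F=(4,1;(1,2),(1,2))$ on $S_3$ (it appears in Table~\ref{tab:genus3_words}): the two order-$2$ cone points generate only the index-two subgroup of $\Z_4$, so no choice of splitting makes the handle a permutation component. Such handles must instead be undone by \emph{self} $k$-compatibilities (Construction~\ref{cons:k-comp}, last paragraph), an operation your argument never invokes. Relatedly, your peeling step for $g_0=0$ silently assumes both sides of the separating curve have connected preimage; this holds only if the two enclosed cone orders (resp.\ the remaining ones together with the boundary order) have the right $\lcm$, and must be checked alongside the unit/divisibility conditions you do list.

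The second and decisive gap is the one you flag yourself and leave open: an irreducible Type~2 class, e.g.\ $D_F=(30,0;(1,6),(1,10),(11,15))$ on $S_{11}$, is irreducible by Gilman's criterion and hence admits no invariant multicurve at all. Every $k$-compatibility and every permutation \emph{addition} leaves an invariant multicurve in its output, so the only way such a class can arise from the three operations is as the result of a terminal permutation \emph{deletion} applied to a reducible class with quotient orbifold $(g';n_1,n_2,n_3)$, $g'>0$, of strictly larger complexity. This is incompatible with your lexicographically decreasing induction on $(g_0,\ell)$: the construction must first go \emph{up} in complexity before coming back down, and one must then verify that the fattened reducible class itself decomposes into Type~1 pieces (which again runs into the pairing/number-theoretic issues above). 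Identifying this detour, and carrying out the case analysis showing it always succeeds, is precisely the content of the cited theorem; a plan that defers it as ``where I expect the real work to lie'' has not proved the statement.
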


\begin{rem}
It is important to note that Theorem~\ref{res:2} does not imply that a periodic mapping class always decomposes into Type 1 irreducibles. In fact, any irreducible Type 2 mapping class cannot be decomposed any further. However, the theorem does guarantee (see ~\cite[Lemma 2.23]{PKS}) that an arbitrary Type 2 mapping class can be built inductively from finitely many irreducible Type 1 mapping classes through the processes described in Constructions~\ref{cons:k-comp} and~\ref{cons:perm_add}. For example, consider the irreducible Type 2 $Z_{30}$-action $$D = (30, 0; (1, 6), (1, 10), (11, 15))$$ on $S_{11}$. Consider the following $\mathbb{Z}_{30}$-actions: 
\begin{enumerate}[(i)]
\item $D_1 = (30, 0; (11, 15), (19, 30), (19, 30))$, 
\item $D_2 = (30, 0; (1, 6), (7, 15), (11, 30))$, and
\item $D_3 = (30, 0; (1, 10), (8, 15), (11, 30))$. 
\end{enumerate}
First we note that the $\mathbb{Z}_{30}$-action $D' = (30, 1; (1, 6), (1, 10), (11, 15))$ on $S_{41}$ can be built from $1$-compatibility on the pair $(D_1,D_2)$, followed by a $2$-compatibility on the pair $(D_2,D_3)$, and then a $1$-compatibility on the pair $(D_3,D_1)$. Finally, we perform a $1$-permutation deletion on $D'$ to realize $D$. In general, \cite[Lemma 2.23]{PKS} says that any irreducible Type 2 mapping class can be constructed in a similar manner. 
\end{rem}

\begin{defn}
\label{defn:admi_tuple}
Given integers $s>0$, $u,v,w \geq 0$, an \textit{admissible $(s,u,v,w)$-tuple $\T$} is a tuple of integers of the form
\begin{gather*}
\T = [((i_1,j_1),k_1),\ldots,((i_u,j_u),k_u);(i_1',g_1'),\ldots, (i_v',g_v'); \\ ((i_1'',j_1''),g_1''), \ldots,((i_w'',j_w''),g_w'')], 
\end{gather*}
where for each $q$, $1 \leq i_q < j_q \leq s$, $k_q \geq 1$, , $1 \leq i_q' \leq s$, $1 \leq i_q'' < j_q'' \leq s$, and $g_q',g_q'' > 0$.
\end{defn}

\begin{defn}
\label{defn:comp_FTtuple}
Given a linear $s$-tuple $(F_1,\ldots, F_s)$ of degree $n$ and genus $g$ as in Definition~\ref{defn:compk_tuple} and an admissible $(s,u,v,w)$-tuple $\T$ as in Definition~\ref{defn:admi_tuple}, we construct a \textit{compatible $(F,\T)$-tuple} $F_{\T}$ of degree $n$ and genus $g(F_{\T})$ through the constructions in the following sequence of steps.
\begin{enumerate}[\textit{Step} 1.]
\item If $u = 0$, then we skip this step. Otherwise, for $1 \leq q \leq u$, we perform a self $k_q$-compatibility in $\F_{i_q,j_q}$, if $\F_{i_q,j_q}$ admits such a compatibility.
\item If $v = 0$, then we skip this step. Otherwise, for $1 \leq q \leq v$, we perform a $g_q'$-permutation addition on the $\F_{i_q}$.
\item If $w = 0$, then we skip this step. Otherwise, for $1 \leq q \leq w$, we perform a $g_q''$-permutation deletion on the $\F_{i_q,j_q}$, if $\F_{i_q,j_q}$ admits such a deletion.
\end{enumerate}
\end{defn}

\noindent Note that a compatible $(F,\T)$-tuple, where $\T$ is an admissible $(s,0,0,0)$-tuple simply refers to the linear $s$-tuple $F$. With this notation in place, Theorem~\ref{res:2} can be now restated as follows. 
\begin{theorem}\label{res:21}
Given an arbitrary non-rotational periodic mapping class in $G \in \Mod(S_g)$, for $g \geq 2$, there exists a linear $s$-tuple $F \in \Mod(S_{g'})$ of irreducible Type 1 actions and an admissible $(s,u,v,w)$-tuple $\T$ of integers such that $G = F_{\T}$.
\end{theorem}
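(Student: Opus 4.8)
The plan is to obtain Theorem~\ref{res:21} as a bookkeeping reformulation of Theorem~\ref{res:2}, so the real work lies in matching the geometric operations produced by the decomposition to the combinatorial data of a linear tuple and an admissible tuple. First I would apply Theorem~\ref{res:2} to the given non-rotational $G \in \Mod(S_g)$ with $g \geq 2$: this yields a finite collection of irreducible Type 1 mapping classes, each of order $n = \ord(G)$, together with a finite list of $k$-compatibilities, permutation additions, and permutation deletions whose net effect on these pieces reconstructs $\G$ (and hence $G$) on $S_g$.

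Next I would sort the $k$-compatibilities into those joining two distinct irreducible pieces and the self $k$-compatibilities joining two orbits carried by a single already-assembled piece. The crucial structural step is to list the irreducible pieces as $F_1, \ldots, F_s$ so that the distinct-piece compatibilities form a single chain, i.e.\ so that $F_i$ and $F_{i+1}$ are joined by a $k_i$-compatibility for $1 \leq i \leq s-1$; this exhibits $F = (F_1, \ldots, F_s)$ as a linear $s$-tuple in the sense of Definition~\ref{defn:compk_tuple}. To justify this I would examine how the pieces are arranged along the quotient orbifold $\O_G$: the joining annuli $A(F_{i,i+1})$ and the complementary regions $\Sigma_i(F)$ must be chosen to satisfy the decomposition $S_g = \sqcup_{i=1}^{s} \Sigma_i(F) \sqcup_{i=1}^{s-1} A(F_{i,i+1})$ and the genus-additivity relation of Definition~\ref{defn:compk_tuple}(iii), with conditions (i) and (ii) following because each consecutive pair is genuinely joined by a $k_i$-compatibility and $\F$ restricts to the piece action $\F_i$ away from the joining annuli, as recorded in Construction~\ref{cons:k-comp}.

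With the linear backbone fixed, I would transcribe the remaining operations into an admissible $(s,u,v,w)$-tuple $\T$ (Definition~\ref{defn:admi_tuple}): each self $k_q$-compatibility performed within $\F_{i_q,j_q}$ becomes an entry $((i_q,j_q),k_q)$ with $1 \leq i_q < j_q \leq s$; each $g_q'$-permutation addition on $\F_q$ becomes $(i_q',g_q')$ with $1 \leq i_q' \leq s$; and each $g_q''$-permutation deletion becomes $((i_q'',j_q''),g_q'')$. I would then check that running $F$ and $\T$ through the three-step assembly of Definition~\ref{defn:comp_FTtuple} reproduces, step for step, the construction supplied by Theorem~\ref{res:2}, so that $G = F_{\T}$, the final genus matching $g$ by the additivity formulas together with the genus contributions of the self-compatibilities and permutation operations. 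The main obstacle I anticipate is the linearization in the second step: showing that the distinct-piece $k$-compatibilities can always be arranged into a single chain rather than a branching tree, with any branching or cyclic pattern of the gluing absorbed into self-compatibilities and permutation operations, which requires a careful analysis of the incidence of the irreducible pieces along $\O_G$.
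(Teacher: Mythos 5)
Your proposal takes essentially the same route as the paper: the paper offers no separate proof of this theorem, presenting it purely as a notational restatement of Theorem~\ref{res:2} once compatible $(F,\T)$-tuples have been defined, which is exactly your ``bookkeeping reformulation'' strategy of transcribing the $k$-compatibilities, permutation additions, and deletions produced by the decomposition into the entries of an admissible $(s,u,v,w)$-tuple. The one substantive issue you flag --- that the distinct-piece $k$-compatibilities must be arrangeable into a single linear chain $F_1,\ldots,F_s$ rather than a branching or cyclic incidence pattern --- is a genuine point, and the paper does not address it either; it is implicitly absorbed into the convention that any gluing not fitting the linear backbone is recorded as a self $k$-compatibility entry $((i_q,j_q),k_q)$ of $\T$ performed on the already-assembled piece $\F_{i_q,j_q}$. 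So your proposal is at least as complete as the paper's treatment, and the obstacle you anticipate is precisely the only place where additional rigor would have to be supplied.
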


\subsection{Symplectic representations of periodic mapping classes} 
\label{sec:per_symp_rep}
For $g \geq 1$, let $\Psi : \Mod(S_g) \to \text{Sp}(2g;\Z)$ be the surjective representation afforded by the action of $\Mod(S_g)$ on $H_1(S_g;\Z)$ with respect to the \textit{canonical homology generators} $\{a_i,b_i : 1 \leq i \leq g\}$ as shown in Figure~\ref{fig:2_lick_gens}. In this subsection, we will state some results from~\cite[Section 4]{PKS} that are relevant to this paper.

Let $F \in \Mod(S_g)$ be an irreducible Type 1 action that is realized by the rotation of a hyperbolic polygon $\P_F$ with a boundary word $W(\P_F)$ (when read counterclockwise) as in Theorem~\ref{res:1}. Let $W(\P_F)$ be of the form $Q a R b S a^{-1} T b^{-1} U,$ for some words $Q, R, S, T,U$ (possibly empty), and letters $a,b$. Then by an application of the \emph{handle normalization algorithm} detailed in~\cite[Section 3.4]{GR}, we have the following.

\begin{prop}[{\cite[Proposition 4.2]{PKS}}]\label{prop:normal}
Let $W(\P_F) =QaRbSa^{-1}Tb^{-1}U$. Suppose that $\P'$ is the polygon with boundary word $W(\P') = QTSRUxyx^{-1}y^{-1}$ obtained by applying the handle normalization algorithm once to $\P_F$. Then $x$ and $y$ are freely homotopic to $U^{-1}R^{-1}a^{-1}Tb^{-1}U$ and $QTb^{-1}U$, respectively. 
\end{prop}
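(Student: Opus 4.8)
The plan is to regard this as a single pass of the handle normalization algorithm on the polygon $\P_F$, and to recover the homotopy classes of the two edges it introduces by reading them as edge-paths. Throughout I would use the two standard conventions of the classification algorithm: each letter of a boundary word is a directed edge of the polygon and hence, once all vertices are identified to a single point, a based loop in $S_g$ representing a generator of $\pi_1(S_g)$; and a diagonal joining two vertices $P,P'$ is homotopic, rel $\{P,P'\}$, to either of the two boundary arcs it cuts off, so its class is read by spelling out the edges along such an arc. The linked pairs here are $\{a,a^{-1}\}$ and $\{b,b^{-1}\}$, which interleave in $W(\P_F)=QaRbSa^{-1}Tb^{-1}U$, and the algorithm replaces them by a single commutator $xyx^{-1}y^{-1}$ via two cut-and-paste moves.

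For the first move I would introduce a diagonal $x$ separating the two $a$-edges, cut $\P_F$ along $x$ into two sub-polygons, and reglue them along the identified pair $\{a,a^{-1}\}$. This deletes both copies of $a$, leaves $x$ as a new matched pair $\{x,x^{-1}\}$, and brings the two $b$-edges into position to be merged in the next move. Reading $x$ along the boundary arc it subtends, and in the orientation dictated by the gluing, spells $x\simeq QTb^{-1}U$.

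For the second move I would cut the resulting polygon along a diagonal $y$ separating the two $b$-edges and reglue along $\{b,b^{-1}\}$. This deletes both copies of $b$ and collects the two new edges into a single commutator block, giving $W(\P')=QTSRU\,xyx^{-1}y^{-1}$; this is exactly the output predicted by the classical handle lemma, under which $A\,a\,B\,b\,C\,a^{-1}\,D\,b^{-1}\,E$ passes to $ADCBE\,xyx^{-1}y^{-1}$, so that the inner blocks $R,S,T$ are reversed to $T,S,R$ while $Q$ and $U$ keep their positions. Reading $y$ along its boundary arc then yields $y\simeq U^{-1}R^{-1}a^{-1}Tb^{-1}U$.

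I expect the main obstacle to be the orientation and basepoint bookkeeping through the two cuts: at each stage one must track which of the two arcs subtended by the diagonal produces the asserted word, and with which sign each edge is traversed, since an error there flips an exponent in $x$ or $y$. A useful internal check is to substitute the resulting expressions for $x$ and $y$ back into $W(\P')$ and verify, using the relator $W(\P_F)=1$, that one recovers a cyclic rearrangement (or the inverse) of the original relation; carrying out this substitution and reconciling it with the reordering $QRS\ldots\to QTSR\ldots$ is what pins down the correct orientations and confirms the two homotopy classes.
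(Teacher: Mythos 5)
The paper does not actually prove this proposition; it imports it from \cite[Section 4]{PKS}, where it is obtained by tracking one pass of the handle normalization algorithm of \cite[Section 3.4]{GR}. Your overall strategy --- perform the two cut-and-paste moves and read each new edge off as the boundary arc subtended by the corresponding diagonal --- is exactly the intended argument, so the approach is the right one.

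However, there is a concrete problem with the plan as you have written it, beyond the fact that the two key identifications are asserted rather than derived. You propose to make the \emph{first} cut along a diagonal separating the two $a$-edges and then glue along $\{a,a^{-1}\}$, claiming this produces $x\simeq QTb^{-1}U$. This cannot happen. For the regluing along $a$ to reassemble the two pieces into a single disk, the diagonal must indeed separate the two $a$-edges; but then each of the two boundary arcs it subtends contains exactly one of $a,a^{-1}$, so the loop represented by that diagonal has $a$-exponent sum $\pm 1$ (equivalently, its class in $H_1$ has coefficient $\pm 1$ on $[a]$, and the edge classes of the polygon are independent in $H_1$ apart from the vacuous boundary relation). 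The word $QTb^{-1}U$ contains no occurrence of $a$, so it is not freely homotopic to any such diagonal. The correct bookkeeping is the reverse of yours: the first move must eliminate the pair $\{b,b^{-1}\}$, cutting along the diagonal that subtends the arc $Tb^{-1}U\,Q$ (a cyclic rearrangement, hence free-homotopy representative, of $QTb^{-1}U$); this is the move that creates $x$. The $a$-pair is eliminated second, inside the intermediate polygon, and the resulting diagonal $y$ is read as an arc such as $TSa^{-1}x$, which, after substituting $x\simeq Tb^{-1}UQ$ and using the relator $QaRbSa^{-1}Tb^{-1}U=1$, is conjugate to $R^{-1}a^{-1}Tb^{-1}$, i.e.\ to the stated $U^{-1}R^{-1}a^{-1}Tb^{-1}U$. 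Note also that ``homotopically equivalent'' here can only mean free homotopy (conjugacy in $\pi_1$), since the stated words are cyclic rearrangements of the actual boundary arcs; this suffices for the homological application in Theorem~\ref{thm:rep_irrtype1}. Your proposed consistency check (substituting back into $W(\P')$) is a reasonable necessary condition, but you do not carry it out, and it would not by itself pin down the exponent of $a$ that your first move gets wrong.
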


\noindent We note that there is a small typographical error in the way Proposition~\ref{prop:normal} (i.e. Proposition 4.2) is stated in~\cite{PKS} as the roles of $x$ and $y$ are mistakenly interchanged in the statement. However, this error does not appear in the proof.

By a set of \textit{standard generators} of $H_1(S_g;\Z)$, we mean collection $\{a_i',b_i': 1 \leq i \leq g\}$ of curves occurring along the $4g$-sided polygon $\P(g)$ with $\W(\P(g)) = \prod_{i=1}^g [a_i',b_i']$. In order to compute $\Psi(F)$ (up to conjugacy), we need to express a set of \textit{standard generators} $\{a_i',b_i': 1 \leq i \leq g\}$ of $H_1(S_g;\Z)$ in terms of the letters occurring along the boundary of the polygon $\P_F$ (as in Theorem~\ref{res:1}) that realizes an irreducible Type 1 action $F$. To this end, we apply the handle normalization algorithm iteratively on the polygon $\P_F$. Before moving ahead, we first fix the following notation. 
\begin{enumerate}[(a)]
\item We denote by $\N^i(\P_F)$, the polygon obtained from $\P_F$ after $i$ successive applications of the normalization procedure described in Proposition~\ref{prop:normal}.
\item We denote by $L(\P_F)$, the set of distinct letters in $W(\P_F)$. 
\item We denote by $\B(\P_F)$, a set of standard generators $\{a_i',b_i': 1 \leq i \leq g\}$ of $H_1(S_g;\Z)$ obtained from iterative applications of the handle normalization algorithm, expressed in terms of the elements in $L(\P_F)$. 
\end{enumerate}

\noindent Let $W=W(\P_F)$ and $W' = W(\P')$ be as in Proposition~\ref{prop:normal}. Then the relabelling map $$ \B(\P')\to \B(\P_F)\, :\, x \mapsto U^{-1}R^{-1}a^{-1}Tb^{-1}U, \, y \mapsto QTb^{-1}U ,\, z \mapsto z,$$ $z\in \B(\P')\setminus \{x, y\}$ yields a change of basis matrix $f_{\P',\P_F}$ which is obtained by expressing each of vectors in $\B(\P')$ in terms of the letters in $W(P_F)$. This leads us to the following lemma.

\begin{lemma}[{\cite[Lemma 4.4]{PKS}}]
\label{lem:normalization}
Let $F \in \Mod(S_g)$ be an irreducible Type 1 action that is realized by the rotation of a hyperbolic polygon $\P_F$ as in Theorem~\ref{res:1}.  Then 
$$W(\N^g(\P_F)) = \displaystyle \prod_{i=1}^g [x_i,y_i],$$ and the matrix
 $$\displaystyle f_{\P_F}= \prod_{i=1}^{g} f_{\N^i(\P_F),\,\N^{i-1}(\P_F)}$$ defines a change of basis matrix on $H_1(S_g;\mathbb{Z})$ such that $$\B(\P(g)) \xmapsto{f_{\P_F}} \B(\P_F).$$ 
\end{lemma}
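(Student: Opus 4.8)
The plan is to prove the two assertions — the normal-form identity $W(\N^g(\P_F)) = \prod_{i=1}^g[x_i,y_i]$ and the fact that the composite $f_{\P_F}$ is a homology automorphism carrying $\B(\P_F)$ to $\B(\P_F)$ — by running the handle normalization algorithm of Proposition~\ref{prop:normal} and tracking the homology classes of the standard generators at each stage.

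For the normal-form identity, I would observe that each application of Proposition~\ref{prop:normal} replaces an interlocked pattern $QaRbSa^{-1}Tb^{-1}U$ by $QTSRU\,xyx^{-1}y^{-1}$, which detaches a single handle $[x,y]$ whose letters no longer interleave with the residual word $QTSRU$. I would then argue by induction on $g$ that the front word $QTSRU$ is a side-paired polygon presenting the lower-genus surface obtained after removing this handle, so that repeated normalization peels off one handle per step. Since $\P_F$ presents the closed orientable surface $S_g$ by construction (Theorem~\ref{res:1}), after $g-1$ steps the residual front word presents $S_1$ and is therefore already a single commutator; hence $W(\N^{g-1}(\P_F)) = \prod_{i=1}^g[x_i,y_i]$, and one further (vacuous) normalization leaves this unchanged. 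The only genuine input here is that an interlocked pair exists in each intermediate word of positive residual genus, which holds because an orientable polygon of positive genus cannot have all its edge-pairs unlinked.

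For the homology statement, I would use that each normalization is realized by a diagonal cut-and-reglue of the polygon, hence by a homeomorphism of $S_g$; thus every $f_{\N^i(\P_F),\N^{i-1}(\P_F)}$ is a well-defined automorphism of $H_1(S_g;\Z)$, prescribed on the chosen basis by the substitutions $x \mapsto QTb^{-1}U$, $y \mapsto U^{-1}R^{-1}a^{-1}Tb^{-1}U$, and $z \mapsto z$ of Proposition~\ref{prop:normal}. As a product of automorphisms, $f_{\P_F}$ is itself an automorphism of $H_1(S_g;\Z)$. To obtain $\B(\P_F) \xmapsto{f_{\P_F}} \B(\P_F)$, I would start from the canonical basis $\{x_i,y_i\}$ read off the normalized word and pull it back through the maps $f_{\N^{g-1},\N^{g-2}},\ldots,f_{\N^1,\N^0}$ in turn; by the definition of $\B$ as the standard generators expressed in the letters of the current polygon, the fully pulled-back expressions are words in $L(\P_F)$ that are precisely the elements of $\B(\P_F)$. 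The ``$z\mapsto z$'' clauses make the successive substitutions compatible, so the composite is well-defined and maps the standard symplectic basis onto itself.

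The step I expect to be the main obstacle is the inductive bookkeeping in the second part: verifying that at each stage the letters constituting the standard generators of $\N^i(\P_F)$ are substituted consistently — in particular that a handle detached at step $i$ contributes letters that are thereafter fixed by the ``$z\mapsto z$'' clause and are not re-expressed at later stages — so that the composition genuinely returns the prescribed set $\B(\P_F)$ rather than a merely conjugate or sign-permuted basis. Establishing the existence of an interlocked pair at every positive-genus stage, and confirming the invertibility of each intermediate substitution on $H_1(S_g;\Z)$, are the supporting points on which this bookkeeping rests.
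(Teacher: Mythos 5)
The paper offers no written proof of this lemma: it is asserted as an immediate consequence of the handle normalization algorithm of \cite[Section 3.4]{GR} together with Proposition~\ref{prop:normal}, so your outline is essentially a reconstruction of the intended argument, and your treatment of the second assertion (each normalization is a cut-and-reglue homeomorphism, hence induces an automorphism of $H_1(S_g;\Z)$ prescribed by the substitutions of Proposition~\ref{prop:normal}, and the composite carries the pulled-back symplectic basis to $\B(\P_F)$) is the right idea and matches what the paper does in its worked example in Section~\ref{sec:symp_method}.

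There is, however, a genuine gap in your proof of the first assertion. The handle move of Proposition~\ref{prop:normal} replaces $QaRbSa^{-1}Tb^{-1}U$ by $QTSRU\,xyx^{-1}y^{-1}$ and therefore preserves the number of edges of the polygon, whereas $\P_F$ is a $k(F)$-gon with $k(F)=n$ or $2n$, which is in general strictly larger than $4g$ (in the paper's own example, $D_F=(9,0;(1,3),(1,9),(5,9))$ gives an $18$-gon for $g=3$, while $\prod_{i=1}^3[x_i,y_i]$ has only $12$ edges). Consequently your inductive step fails where you claim that ``after $g-1$ steps the residual front word presents $S_1$ and is therefore already a single commutator'': a polygon word presenting a genus-one surface need not be a single commutator (e.g.\ $aba^{-1}b^{-1}cc^{-1}$), and after $g$ handle moves the front word is a genus-zero word that need not be empty. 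To close the argument you must invoke the other moves of the classification algorithm --- cancellation of adjacent pairs $cc^{-1}$ and reduction to a single vertex class --- either as a preliminary step or interleaved with the handle moves; this is also where the amalgamation of letters visible in the paper's example (the nine letters $a_j$ combining into the six composite letters $b_1,\dots,b_6$) comes from, and your ``$z\mapsto z$'' bookkeeping for $f_{\P_F}$ must be carried out on this amalgamated alphabet rather than on $L(\P_F)$ itself.
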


\noindent  For an isomorphism $\varphi: H_1(S_g;\mathbb{Z}) \to H_1(S_g;\mathbb{Z})$, let $M_{\varphi}$ denote the matrix of $\varphi$ with respect to a set of standard homology generators. The following theorem describes the structure of $\Psi(F)$, up to conjugacy.

\begin{theorem}[{\cite[Theorem 4.5]{PKS}}]
\label{thm:rep_irrtype1}
Let $F \in \Mod(S_g)$ be an irreducible Type 1 with $D_F = ((n,0;(c_1,n_1), (c_2,n_2), (c_3,n)).$ Then  up to conjugacy, $\displaystyle \Psi(F) = M_{\varphi},  \text{ where } \varphi = f_{\P_F}^{-1}\phi_{\P_F}f_{\P_F},$ with $f_{\P_F}$ as in Lemma ~\ref{lem:normalization}, and $\B(\P_F) \xmapsto{\phi_{\P_F}} \B(\P_F)$ is induced by $a_i \mapsto a_j,$ where
$$j \equiv \begin{cases}
i+2c_3^{-1} \pmod{2n} , & \text{if } n_1,n_2\neq 2, \text{ and}\\
i+c_3^{-1} \pmod{n},  & \text{otherwise}.
\end{cases}
$$
\end{theorem}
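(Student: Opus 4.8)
The plan is to read off the action of $F$ on $H_1(S_g;\Z)$ directly from its geometric realization as a rotation of the polygon $\P_F$, and then transport this action to the standard symplectic basis via the normalization isomorphism $f_{\P_F}$ supplied by Lemma~\ref{lem:normalization}. Thus the argument splits into two essentially independent pieces: computing $\phi_{\P_F}$ from the rotation, and converting to standard coordinates.

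First, by Theorem~\ref{res:1}, $F$ is realized as the rotation $\theta_F = 2\pi c_3^{-1}/n$ of the regular hyperbolic $k(F)$-gon $\P_F$. Since this rotation is a symmetry of $\P_F$, it cyclically permutes the boundary edges and hence acts on the letters $L(\P_F)$. I would first pin down this permutation: one edge subtends the angle $2\pi/k(F)$, so the rotation through $\theta_F$ advances each edge by $k(F)\theta_F/(2\pi)$ positions, which evaluates to $2c_3^{-1}$ when $k(F) = 2n$ and to $c_3^{-1}$ when $k(F) = n$. This is precisely the assignment $a_i \mapsto a_j$ that defines $\phi_{\P_F}$. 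Because the rotation is an honest homeomorphism of $S_g$, this edge substitution respects the boundary relation and the side-pairing identifications encoded in $W(\P_F)$, so it descends to a well-defined automorphism $\phi_{\P_F}$ of $H_1(S_g;\Z)$ carrying $\B(\P_F)$ to $\B(\P_F)$, exactly as asserted in the statement.

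Next I would invoke Lemma~\ref{lem:normalization}: the isomorphism $f_{\P_F}$ identifies the homology classes carried by the edge letters of $\P_F$ with the standard generators appearing in the normalized word $W(\N^g(\P_F)) = \prod_{i=1}^g [x_i,y_i]$. In effect, $f_{\P_F}$ is the change of basis relating the \emph{polygon frame} $\B(\P_F)$ to the standard symplectic basis. Since $\phi_{\P_F}$ records the action of $F$ in the polygon frame, the matrix of $\Psi(F)$ with respect to the standard generators is obtained by conjugation: $\varphi = f_{\P_F}^{-1}\phi_{\P_F}f_{\P_F}$ and $\Psi(F) = M_\varphi$. The qualifier \emph{up to conjugacy} absorbs the choices made in selecting the standard representative $\F$ and in performing the normalization, each of which perturbs $\Psi(F)$ only by an inner automorphism of $\text{Sp}(2g;\Z)$.

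The main obstacle I anticipate lies in the conjugation step, specifically in verifying that transporting the rotation along the chain of normalization substitutions of Lemma~\ref{lem:normalization} is genuinely realized by conjugation by $f_{\P_F}$ in the stated order, and not by its inverse or in the opposite order. This amounts to checking the naturality of $\phi_{\P_F}$ with respect to each elementary move $f_{\N^i(\P_F),\N^{i-1}(\P_F)}$, and confirming that the orientations of identified edges (through the side-pairing $a_{2m+1}^{-1}\sim a_{2z}$) are tracked consistently, so that the signs in $M_\varphi$ come out correctly. By contrast, the permutation computation yielding $\phi_{\P_F}$ is routine once the number of sides $k(F)$ and the rotation angle $\theta_F$ are in hand.
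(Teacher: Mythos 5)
Your argument is sound and takes essentially the approach the paper intends: Theorem~\ref{thm:rep_irrtype1} is imported from~\cite[Section 4]{PKS} without a proof in this paper, but the worked example in Section~\ref{sec:symp_method} (computing $\Psi(F)$ for $D_F = (9,0;(1,3),(1,9),(5,9))$ via $[x_i] \xrightarrow{f} \cdot \xrightarrow{\phi} \cdot \xrightarrow{f^{-1}} \cdot$) exhibits exactly the mechanism you describe --- the rotation $\theta_F$ advances each edge of $\P_F$ by $k(F)\theta_F/(2\pi)$, giving the shift $2c_3^{-1}$ or $c_3^{-1}$ according to whether $k(F)=2n$ or $n$, and $f_{\P_F}$ is the change of basis to the standard symplectic generators. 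The conjugation-order issue you flag as the main obstacle is settled by that same example: one first applies $f$ (expressing a standard generator in the polygon letters), then $\phi$, then $f^{-1}$, which is precisely $\varphi = f_{\P_F}^{-1}\phi_{\P_F}f_{\P_F}$ read right to left, and the absence of signs in $\phi_{\P_F}$ follows, as you note, from the rotation preserving the counterclockwise traversal of $\partial\P_F$.
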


\noindent We refer the reader to Example~\ref{eg:symp_rep_order9_map} for an illustration of Theorem~\ref{thm:rep_irrtype1}. We conclude this subsection with the following remark. 

\begin{rem}
\label{rem:irr_symp_blocks}
By Theorem~\ref{res:2}, an arbitrary non-rotational periodic mapping class $F \in \Mod(S_g)$ can be decomposed into irreducible Type 1 mapping classes. This decomposition induces a decomposition of $\Psi(F)$ (up to conjugacy) into a block-diagonal matrix, where each diagonal block is of one of the following types.
\begin{enumerate}[(i)]
\item The image under $\Psi$ of an irreducible Type 1 component (of $F$) as described in Theorem~\ref{thm:rep_irrtype1}. 
\item Let $F'$ be a component of $F$ resulting from a $k$-compatibility (or a self $k$-compatibility), and let $S_g^b$ denote the surface of genus $g$ with $b$ boundary components. Then there exists a subsurface $S$ (of $S_g$) homeomorphic to $S_{k-1}^2$ (shown in Figure~\ref{fig:chain_connect} below) in which $\F'$ cyclically permutes the disjoint union of the $k$ annuli $A_F \subset S$ involved in the construction. 
\begin{figure}[H]
\centering
\labellist
		\small
		\pinlabel $S_{k-1}^2$ at 398 20
		\endlabellist
	\includegraphics[width=45ex]{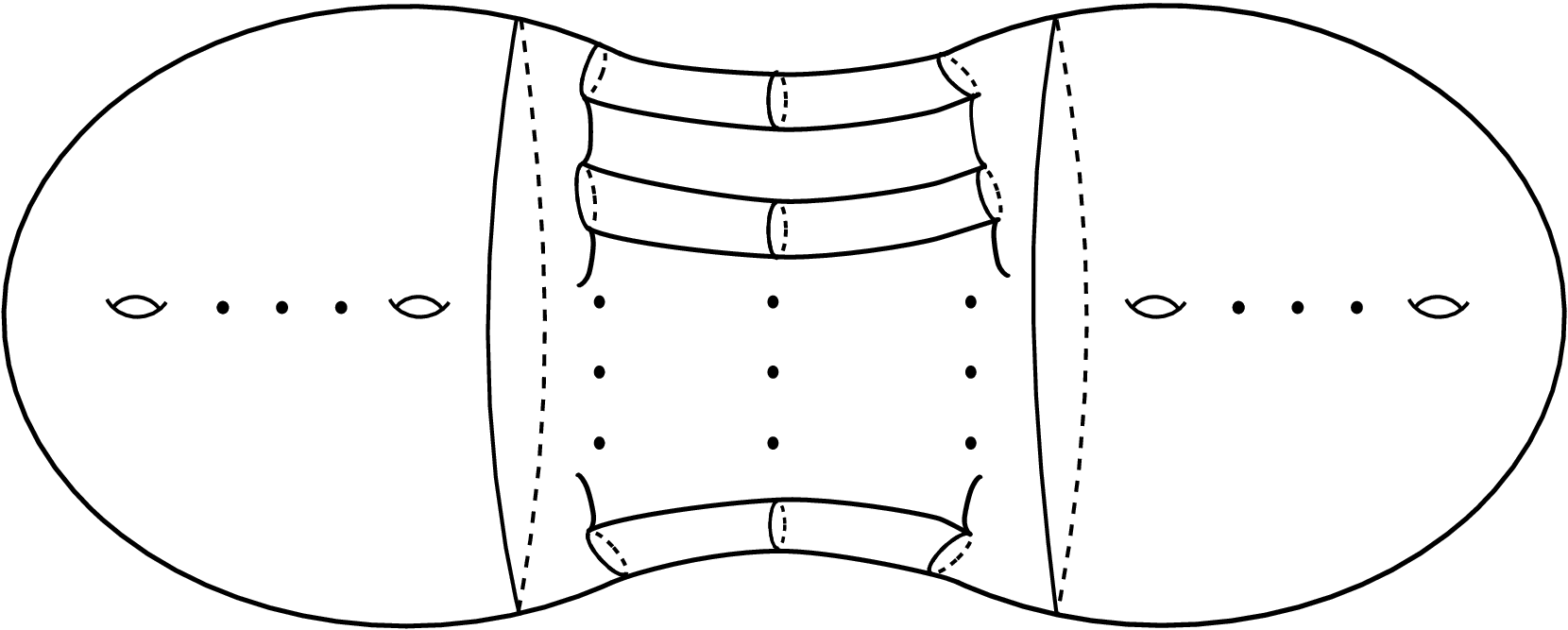}
	\caption[A $k$-compatibility of a pair of irreducible Type 1 maps.]{The subsurface $S \approx S_{k-1}^2$.}
	\label{fig:chain_connect}
\end{figure}
\noindent The diagonal block is obtained from the well-defined action of such an $F'$ on $H_1(S,\Z)$.
\item The image under $\Psi$ under a permutation component of $F$, which permutes $n$ subsurfaces of $S_g$ homeomorphic to some $S_{g'}^1$ as in Construction~\ref{cons:perm_add}. 
\end{enumerate}
\noindent Note that as the blocks of type (ii) and (iii) are simple permutation blocks, one can obtain a  complete description of $\Psi(F)$ (up to conjugacy). 
\end{rem}

\subsection{Relations involving Dehn twists}\label{sec:reln_dehn_twist}
Let $i(c,d)$ denote the geometric intersection number of simple closed curves $c$ and $d$ in $S_g$. A collection $\C=\{c_1,\ldots,c_k\}$ of simple closed curves in $S_g$ is said to form a \textit{chain} if $i(c_i,c_{i+1}) =1$, for $1 \leq i \leq k-1$, and $i(c_i,c_j) = 0$, if $|i-j|>1$. We state the following basic fact (see ~\cite[Section 1.3]{FM}) about chains. 
\begin{lemma}
\label{lem:exist_chains}
For $g \geq 1$, there is a unique chain in $S_g$ of size $2g$, up to homeomorphism. Moreover, when $g > 1$, there is a unique chain in $S_g$ of size $2g+1$, up to homeomorphism. 
\end{lemma}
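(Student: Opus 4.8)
The plan is to establish existence and uniqueness of chains of the stated sizes by building explicit models and then invoking the change-of-coordinates principle for simple closed curves on surfaces.

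\medskip

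First I would construct a chain of size $2g$ explicitly. The plan is to take the standard symplectic basis $\{a_1,b_1,\ldots,a_g,b_g\}$ of $H_1(S_g;\Z)$ realized by simple closed curves and reorder/modify them so that consecutive curves meet once and non-consecutive curves are disjoint. Concretely, on the standard genus-$g$ surface I would place curves $c_1,\ldots,c_{2g}$ so that $c_{2i-1}$ and $c_{2i}$ are the meridian/longitude pair on the $i$-th handle, with $c_{2i}$ and $c_{2i+1}$ arranged to intersect once at the ``neck'' joining handle $i$ to handle $i+1$; this is the familiar picture underlying the Humphries/Lickorish generators. This yields $i(c_j,c_{j+1})=1$ and $i(c_j,c_k)=0$ for $|j-k|>1$, so a chain of size $2g$ exists. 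For the size $2g+1$ chain (when $g>1$), I would adjoin one further curve $c_{2g+1}$ that meets only $c_{2g}$ once, which can be drawn on the last handle; this is exactly the extra curve distinguishing the $2g+1$-chain, and it requires $g>1$ because only then is there enough room to close up the pattern without forcing an unwanted intersection.

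\medskip

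For \textbf{uniqueness up to homeomorphism}, the key tool is the change-of-coordinates principle (see~\cite[Section 1.3]{FM}): any two collections of simple closed curves with the same topological ``intersection pattern'' and the same complementary-region data are related by a homeomorphism of $S_g$. The plan is to show that a chain of size $2g$ is determined by the homeomorphism type of a regular neighborhood $\N$ of $\bigcup_i c_i$ together with how its complement sits in $S_g$. A chain of size $k$ has a neighborhood $\N$ that is a specific surface-with-boundary whose homeomorphism type depends only on $k$ (it is built by plumbing $k$ annuli in a linear pattern), and one computes that for $k=2g$ the complement $\overline{S_g\setminus\N}$ is a disjoint union of disks, pinning down the embedding; for $k=2g+1$ the neighborhood already has the homeomorphism type forcing $\overline{S_g\setminus\N}$ to be planar in the unique compatible way. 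Since the neighborhood and its complement are determined, the change-of-coordinates principle supplies a homeomorphism carrying any one such chain to the standard model, giving uniqueness.

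\medskip

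The main obstacle I anticipate is the Euler-characteristic and boundary bookkeeping for the regular neighborhood $\N$ of a chain: one must verify that a linear plumbing of $k$ annuli has exactly the genus and number of boundary components forcing $\overline{S_g\setminus\N}$ to be a union of disks (in the even case) or a single planar piece (in the odd case), and in particular that the size-$2g+1$ configuration simply cannot be embedded when $g=1$. I would handle this by a direct $\chi$-count: the neighborhood of a $k$-chain has $\chi(\N)=-(k-1)$, and the number of its boundary circles is determined by whether $k$ is even or odd, which is precisely what produces the dichotomy in the statement and the genus restriction $g>1$ for the odd case.
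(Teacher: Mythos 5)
The paper does not actually prove this lemma: it is quoted as a standard fact with a pointer to Farb--Margalit, and your argument is precisely the standard change-of-coordinates proof one finds there, so the overall route is the right one. The uniqueness half is sound once the complement count is made precise: a regular neighborhood $N$ of a $k$-chain satisfies $\chi(N)=-(k-1)$, has genus $\lfloor k/2\rfloor$, and has one boundary circle when $k$ is even and two when $k$ is odd. Hence for $k=2g$ the complement in $S_g$ is forced to be a \emph{single} disk (saying ``a disjoint union of disks'' undersells what the computation gives you), and for $k=2g+1$ it is forced to be exactly two disks, not just ``planar in the unique compatible way.'' With that pinned down, a homeomorphism between the two neighborhoods carrying one chain to the other extends over the disk complements, and uniqueness follows.

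Two specific claims in your write-up need repair. First, your existence model --- a meridian/longitude pair on \emph{every} handle, with consecutive curves pushed so as to meet at the necks --- is not the familiar Humphries/Lickorish configuration, and as described the conditions $i(c_j,c_k)=0$ for $|j-k|>1$ are not verified (deforming the meridian of handle $i+1$ to meet the longitude of handle $i$ risks creating intersections with the neighboring curves). The standard chain to use is $a_1,b_1,c_1,b_2,c_2,\ldots,b_g$ (adjoining $a_g$ for size $2g+1$), exactly the curves of Figure~\ref{fig:2_lick_gens}, where only the end handles contribute an $a$-curve. Second, your explanation of the hypothesis $g>1$ is incorrect: a chain of size $3=2g+1$ \emph{does} embed in the torus (two parallel copies of a meridian together with a longitude; its complement is two disks, consistent with the Euler-characteristic count above), so the step ``the size-$(2g+1)$ configuration simply cannot be embedded when $g=1$'' would fail if you tried to carry it out. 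The restriction $g>1$ is not an embeddability obstruction; fortunately, nothing in the lemma as stated depends on that claim, so excising it leaves your proof of the asserted cases intact.
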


A closed regular neighborhood of the union of curves in $\C$ is a subsurface $S$ of $S_g$ that has one or two boundary components, depending on whether $k$ is even or odd. Let the isotopy classes of $\partial S$ be represented by the curves $d$ (resp. $d_1,d_2$) when $k$ is even (resp. odd). Let $T_c$ denote the left-handed Dehn twist about a simple closed curve $c$ in $S_g$. We will make extensive use of the well known chain relation involving Dehn twists.
\begin{prop}[Chain relation]
\label{prop:chain_reln}
Let $\C = \{c_1,\ldots,c_k \}$ be a chain in $S_g$, where $2 \leq k \leq 2g+1$. Then:
$$ \begin{array}{lcll}
(T_{c_1}T_{c_2} \ldots T_{c_k})^{2k+2} & = & T_d, & \text{ when } k \text{ is even, and} \\ 
(T_{c_1}T_{c_2} \ldots T_{c_k})^{k+1} & = & T_{d_1}T_{d_2}, & \text{ when } k \text{ is odd.} 
\end{array}$$ 
Equivalently, we have 
$$ \begin{array}{lcll}
(T_{c_1}^2 T_{c_2} \ldots T_{c_k})^{2k} & = & T_d, & \text{ when } k \text{ is even, and} \\ 
(T_{c_1}^2T_{c_2} \ldots T_{c_k})^{k} & = & T_{d_1}T_{d_2}, & \text{ when } k \text{ is odd.}
\end{array}$$
\end{prop}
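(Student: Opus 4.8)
The plan is to prove the first (unsquared) form and deduce the second from it, so I begin by recording how the two forms are related. Writing $\delta = T_{c_1}T_{c_2}\cdots T_{c_k}$ and $\gamma = T_{c_1}^2 T_{c_2}\cdots T_{c_k}$, the twists about a chain satisfy exactly the braid relations $T_{c_i}T_{c_{i+1}}T_{c_i}=T_{c_{i+1}}T_{c_i}T_{c_{i+1}}$ together with $T_{c_i}T_{c_j}=T_{c_j}T_{c_i}$ for $|i-j|>1$. Inside the subgroup they generate a purely braid-theoretic manipulation yields the identity $\delta^{k+1}=\gamma^{k}$; for instance, when $k=2$ one has $\delta^3 = T_{c_1}(T_{c_2}T_{c_1}T_{c_2})T_{c_1}T_{c_2}=T_{c_1}(T_{c_1}T_{c_2}T_{c_1})T_{c_1}T_{c_2}=\gamma^2$, and the general case is the classical factorization of the full twist. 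Hence the four displayed relations collapse to two statements: $\delta^{k+1}=T_{d_1}T_{d_2}$ when $k$ is odd, and $\delta^{2k+2}=(\delta^{k+1})^2=T_d$ when $k$ is even. It therefore suffices to understand $\delta^{k+1}$.

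Next I would pass to the closed surface. Let $S$ be the regular neighborhood of $\bigcup\C$, so $S\cong S_h^1$ with $k=2h$ (one boundary $d$) or $S\cong S_h^2$ with $k=2h+1$ (two boundaries $d_1,d_2$), and let $\hat S\cong S_h$ be obtained by capping the boundary component(s) with disks. The heart of the argument is to show that the image $\bar\delta$ of $\delta$ in $\Mod(\hat S)$ is a \emph{periodic} mapping class, realized by the standard rotation $\rho$ of $S_h$ of order $2k+2=4h+2$ in the even case and of order $k+1=2h+2$ in the odd case. Computing the action on homology gives the correct exponents: each $T_{c_i}$ acts on $H_1(\hat S;\mathbb{Z})$ by a symplectic transvection, and the resulting $\Psi(\bar\delta)\in\sp(2h;\mathbb{Z})$ has order precisely $2k+2$ (even) resp. $k+1$ (odd), as one checks for small $h$ and in general from the $A_k$ intersection pattern of the chain. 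The genuine obstacle is upgrading ``periodic on homology'' to ``periodic'': I would do this by exhibiting $\rho$ explicitly via the cyclic symmetry permuting the curves of $\C$ (the chain is the configuration fixed, up to isotopy, by $\rho$) and then verifying $\bar\delta=\rho$ in $\Mod(\hat S)$ by a change-of-coordinates argument. This identification is precisely a factorization of a rotation as a product of chain twists, and is the main thing to nail down.

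Granting that $\bar\delta=\rho$ has order equal to the relevant exponent, the relation follows from the capping exact sequence. Capping a boundary component with a disk induces a surjection $\Mod(S)\to\Mod(\hat S)$ whose kernel is generated by the Dehn twist about that boundary curve; applying this once (even case) or twice (odd case) shows that $\delta^{2k+2}$, resp.\ $\delta^{k+1}$, lies in the subgroup generated by the boundary twists, since $\bar\delta^{\,2k+2}=1$, resp.\ $\bar\delta^{\,k+1}=1$, in $\Mod(\hat S)$. Thus $\delta^{2k+2}=T_d^{\,m}$ when $k$ is even and $\delta^{k+1}=T_{d_1}^{\,m_1}T_{d_2}^{\,m_2}$ when $k$ is odd, for some integers $m,m_1,m_2$.

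It remains to show every exponent equals $1$, the second place where real work is needed. In the odd case the symmetry of $S_h^2$ interchanging $d_1$ and $d_2$ conjugates $\delta$ to a product with the two boundaries swapped, forcing $m_1=m_2$, so it suffices to compute a single boundary multiplicity in each case. I would pin this down by a local rotation-number (fractional Dehn twist coefficient) computation at the boundary: since $\bar\delta$ is realized by $\rho$, the twisting of $\delta$ about $d$ (resp.\ $d_i$) is the rotation angle of $\rho$ at the corresponding fixed point, which one computes to be $1/(2k+2)$ (resp.\ $1/(k+1)$), so raising to the power equal to the order contributes exactly one full boundary twist; as $\delta^{2k+2}$ (resp.\ $\delta^{k+1}$) is already a product of boundary twists, this forces $m=1$ (resp.\ $m_1=m_2=1$). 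Alternatively one can establish $m=1$ in the base case $k=2$ by a direct framing computation in $S_1^1$ and propagate it through the inclusions $S_h^{\bullet}\hookrightarrow S_{h+1}^{\bullet}$; the fractional-twist argument is cleaner, and I expect the multiplicity computation, together with the identification $\bar\delta=\rho$, to be the two key obstacles.
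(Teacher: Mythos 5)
The paper does not prove this proposition: it is quoted as the ``well known chain relation'' and used as a black box (the standard reference being \cite{FM}), so there is no internal proof to compare yours against. The closest the paper comes is its proof of the generalized star relation (Theorem~\ref{thm:gen_starreln}), which is carried out by the Alexander method --- direct verification of the two sides on an explicit filling system of curves and arcs --- together with the remark that the chain relation can be recovered from that relation by capping $d_2$. Your opening reduction is correct and worth keeping: the chain twists satisfy the braid relations, $(\sigma_1\cdots\sigma_k)^{k+1}=(\sigma_1^2\sigma_2\cdots\sigma_k)^k$ is the classical expression of the full twist $\Delta^2$ in $B_{k+1}$, so the two displayed forms are equivalent and everything reduces to identifying $\delta^{k+1}$ (odd $k$) and $\delta^{2k+2}$ (even $k$).

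The difficulty is that the two steps you explicitly defer are the entire mathematical content of the proposition, and the methods you propose for them do not suffice as described. First, periodicity of $\bar\delta$: finite order of the action on $H_1(\hat S;\Z)$ does not imply finite order of $\bar\delta$ (multiplying $\delta$ by an infinite-order Torelli element changes nothing on homology), and a change-of-coordinates argument cannot identify a specific word in Dehn twists with a specific rotation --- change of coordinates only says that any two chains of the same length are topologically equivalent. Establishing $\bar\delta=\rho$ requires the Alexander method (or Birman--Hilden), i.e., exactly the kind of direct verification your plan was structured to avoid. Second, the capping kernel: the kernel of $\Mod(S_h^1)\to\Mod(S_h)$ is not $\langle T_d\rangle$; it also contains the point-pushing subgroup. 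You must cap with a marked disk, use the sequence $1\to\langle T_d\rangle\to\Mod(S_h^1)\to\Mod(S_{h,1})\to 1$, and check that $\rho$ fixes the marked point (it does, since the complementary disk of a filling chain contains a fixed point of the rotation). The multiplicity $m=1$ via the fractional Dehn twist coefficient is likewise asserted rather than computed. The most economical way to close all of these gaps at once, starting from the full-twist identity you already have, is Birman--Hilden: the regular neighborhood of the chain is the double cover of a disk with $k+1$ marked points branched over those points, the $T_{c_i}$ are the lifts of the half-twists $\sigma_i$, and $\Delta^2=\delta^{k+1}$ is the boundary twist of the disk, whose lift is $T_{d_1}T_{d_2}$ when the boundary is disconnectedly covered ($k$ odd) and is a square root of $T_d$ when it is connectedly covered ($k$ even); this yields the relation with the correct multiplicities and never requires passing to the closed surface.
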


\noindent In each case of Proposition~\ref{prop:chain_reln}, we will denote the word enclosed within the parentheses on the left hand side by $W_{\C}$. Note that for all $1<i<k$, $W_{\C}(c_i) = c_{i+1}$. We will also use the following relations also known as the \textit{hyperelliptic relations} (see \cite[Chapter 5]{FM}) in $\Mod(S_g)$, for $g \geq 2$. 

\begin{prop}
\label{prop:hyp_relns}
For $g \geq 2$, let $\{c_1,\ldots,c_{2g+1}\}$ be a chain in $S_g$. Then:
$$\begin{array}{rcl}
(T_{c_{2g+1}} \ldots T_{c_1} T_{c_1} \ldots T_{c_{2g+1}})^2 & = & 1,\text{ and} \\ 
\left[ T_{c_{2g+1}} \ldots T_{c_1} T_{c_1} \ldots T_{c_{2g+1}}, T_{c_{2g+1}} \right]& = & 1,\\
\end{array}$$ 
where $T_{c_{2g+1}} \ldots T_{c_1} T_{c_1} \ldots T_{c_{2g+1}}$ represents the conjugacy class of the standard hyperelliptic involution in $\Mod(S_g)$ encoded by $(2,0; ((1,2),2g+2))$.
\end{prop}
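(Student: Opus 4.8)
The plan is to show that the word $W := T_{c_{2g+1}}\cdots T_{c_1}T_{c_1}\cdots T_{c_{2g+1}}$ is precisely the hyperelliptic involution $\iota$, after which both relations are immediate. First I would fix the standard realization of $\iota$ as the deck transformation of the double cover $S_g \to S^2$ branched over $2g+2$ points, equivalently the rotation by $\pi$ of a symmetrically embedded $S_g$; by Proposition~\ref{prop:ds-action} together with a Riemann--Hurwitz computation this is the unique order-$2$ class with data set $(2,0;((1,2),2g+2))$, since $\tfrac{2-2g}{2}=2+(2g+2)(\tfrac12-1)=1-g$ and $\sum\tfrac{n}{n_j}c_j=2g+2\equiv 0\pmod 2$. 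I would then arrange $\{c_1,\dots,c_{2g+1}\}$ as the standard $\iota$-symmetric chain, so that each $c_i$ is invariant under $\iota$ (with reversed orientation). Since $\iota$ is orientation-preserving and $\iota(c_i)=c_i$ as unoriented curves, $\iota T_{c_i}\iota^{-1}=T_{\iota(c_i)}=T_{c_i}$ for every $i$; in particular $\iota$ commutes with $T_{c_{2g+1}}$, which will yield the second relation $[W,T_{c_{2g+1}}]=1$ once $W=\iota$ is established.

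The core step is the identification $W=\iota$. Here I would invoke the Birman--Hilden correspondence: since each $c_i$ is $\iota$-invariant, every $T_{c_i}$ lies in the symmetric mapping class group $\mathrm{SMod}(S_g)$ (the centralizer of $\iota$), and the quotient $\mathrm{SMod}(S_g)/\langle\iota\rangle$ is isomorphic to $\Mod(S_{0,2g+2})$, under which $T_{c_i}$ descends to the half-twist $\sigma_i$ exchanging the $i$-th and $(i{+}1)$-th branch points. The image of $W$ is therefore $\sigma_{2g+1}\cdots\sigma_1\sigma_1\cdots\sigma_{2g+1}$, which is the twist $T_\delta$ about the boundary $\delta$ of a regular neighborhood of the arc running through all $2g+2$ branch points; as $\delta$ bounds an unpunctured disk on the sphere, $T_\delta=1$ in $\Mod(S_{0,2g+2})$. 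Hence $W\in\ker\big(\mathrm{SMod}(S_g)\to\Mod(S_{0,2g+2})\big)=\langle\iota\rangle$, so $W\in\{1,\iota\}$. To decide between the two, I would compute the homological action from the transvections $\Psi(T_{c_i})$ and find $\Psi(W)=-I\neq I=\Psi(1)$, forcing $W=\iota$.

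With $W=\iota$ in hand, the first relation follows at once from $\iota^2=1$, and the second from the commutation $[\iota,T_{c_{2g+1}}]=1$ recorded above, while the parenthetical data-set claim is exactly the Riemann--Hurwitz verification of the first paragraph. I expect the main obstacle to be this core step, and within it the bookkeeping in $\Mod(S_{0,2g+2})$ that $\sigma_{2g+1}\cdots\sigma_1\sigma_1\cdots\sigma_{2g+1}$ is the all-encircling boundary twist $T_\delta$: homological data alone only pins $W$ down modulo $\ker\Psi$, so one genuinely needs the branched-cover lift to realize the palindrome as $T_\delta$ and to separate $\iota$ from $1$. A self-contained alternative staying inside the paper's framework would use the chain relation $(T_{c_1}\cdots T_{c_{2g+1}})^{2g+2}=1$ (the two boundary curves of the filling chain bound disks) and the explicit polygon rotation of Theorem~\ref{res:1} to realize $P:=T_{c_1}\cdots T_{c_{2g+1}}$ as a rotation of order $2g+2$ with $\iota=P^{g+1}$, and then verify $W=P^{g+1}$ on the symmetric model; this avoids Birman--Hilden but replaces it with the delicate identification of the reversed product $T_{c_{2g+1}}\cdots T_{c_1}$ with a power of $P$.
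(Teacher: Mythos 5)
The paper gives no proof of this proposition; it is quoted from \cite[Chapter 5]{FM}, and your Birman--Hilden argument is essentially the proof given there. The overall structure is sound: the symmetric chain makes each $T_{c_i}$ lie in the centralizer of $\iota$, the kernel of $\mathrm{SMod}(S_g)\to\Mod(S_{0,2g+2})$ is $\langle\iota\rangle$, and the homological action $\Psi(W)=-I$ correctly separates $\iota$ from the identity; the commutation $[W,T_{c_{2g+1}}]=1$ and the data-set verification are also fine. One identification in your core step is off, although it does not affect the conclusion: the palindrome $\sigma_{2g+1}\cdots\sigma_1\sigma_1\cdots\sigma_{2g+1}$ is \emph{not} the twist about the boundary of a regular neighborhood of an arc through all $2g+2$ branch points --- that twist is the full twist $(\sigma_1\cdots\sigma_{2g+1})^{2g+2}$, which already differs from the palindrome in the disk braid group (exponent sum $(2g+1)(2g+2)$ versus $2(2g+1)$). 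The palindrome is instead the point-push of the last marked point around the remaining $2g+1$, equivalently a twist about a curve enclosing $2g+1$ of the marked points; on the sphere such a curve is isotopic to one enclosing the single remaining marked point, so the element is still trivial in $\Mod(S_{0,2g+2})$ (indeed this is one of the defining relations in the standard presentation of the spherical braid group). With that correction your conclusion $W\in\ker=\langle\iota\rangle$, and hence $W=\iota$, stands.
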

 
\noindent  Let $c_1,c_2,c_3,d_1,d_2,d_3$, and $b$ be the curves in $S_1^3$, as indicated in Figure~\ref{fig:star_reln} below.
\begin{figure}[H]
		\labellist
		\tiny
		\pinlabel $\textcolor{black}{b}$ at 133 180
		\pinlabel $\textcolor{black}{c_1}$ at 100 87
		\pinlabel $\textcolor{black}{c_2}$ at 270 60
		\pinlabel $\textcolor{black}{c_3}$ at 165 210
		\pinlabel $\textcolor{black}{d_1}$ at 50 150
		\pinlabel $\textcolor{black}{d_2}$ at 211 78
		\pinlabel $\textcolor{black}{d_3}$ at 358 150
		\endlabellist
		\includegraphics[width=35ex]{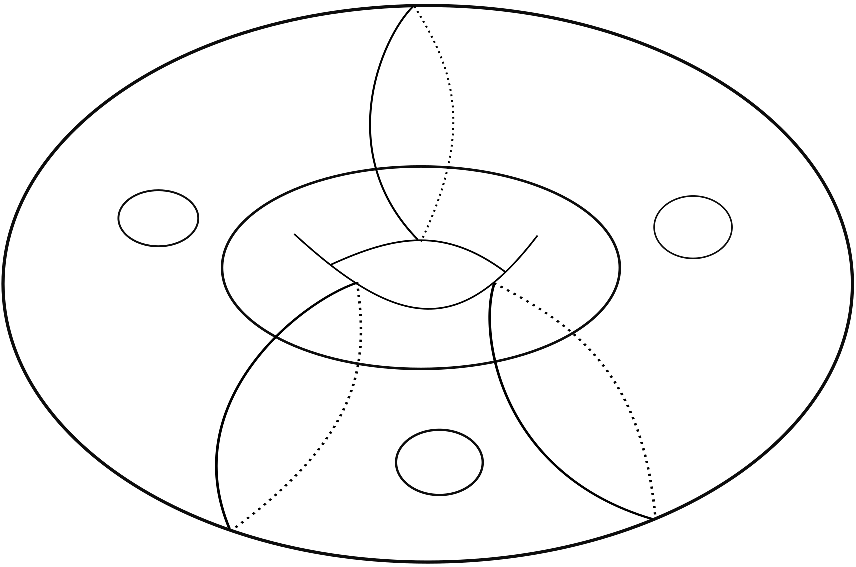}
		\caption{The curves involved in the star relation in $\Mod(S_1^3)$.}
		\label{fig:star_reln}
\end{figure}
\noindent We will use the following relation  in $\Mod(S_1^3)$ due to Gervais~\cite{SG}, also known as the \textit{star relation}, to develop a method for writing periodic mapping classes of order $3$ as words in Dehn twists. 

\begin{prop}[Star relation]
\label{prop:star_reln}
Let $c_1,c_2,c_3,d_1,d_2,d_3$, and $b$ be the curves in $S_1^3$, as indicated in Figure~\ref{fig:star_reln}. Then:
$$(T_{c_1}T_{c_2}T_{c_3}T_b)^3 = T_{d_1}T_{d_2}T_{d_3}.$$
\end{prop}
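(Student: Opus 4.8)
The plan is to prove the relation by recognizing the left-hand factor $\Phi := T_{c_1}T_{c_2}T_{c_3}T_b$ as a realization, up to boundary twists, of the order-three rotational symmetry of $S_1^3$, and then invoking the Alexander method~\cite{FM}. Since $i(c_i,c_j)=0$ for $i\neq j$ while $i(c_i,b)=1$ for each $i$, a regular neighborhood of $c_1\cup c_2\cup c_3\cup b$ is all of $S_1^3$; in particular the isotopy classes of $b,c_1,c_2,c_3$, together with one properly embedded arc meeting $\partial S_1^3$, form a filling system, and a mapping class of a surface with boundary is determined by its action on such a system up to a product of boundary twists. The first step is therefore to compute $\Phi(b),\Phi(c_1),\Phi(c_2),\Phi(c_3)$.

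The second step is to carry out this curve-tracking using the surgery description of a Dehn twist, namely that $T_x(y)$ is obtained from $y$ by resolving $i(x,y)$ parallel copies of $x$ along $y$. Because the three spokes are disjoint, the twists $T_{c_1},T_{c_2},T_{c_3}$ commute, and a direct computation should show that $\Phi$ cyclically permutes the spokes, $c_1\mapsto c_2\mapsto c_3\mapsto c_1$, while fixing the isotopy class of $b$; geometrically $\Phi$ is then isotopic to the angle-$2\pi/3$ rotation $\rho$ of the symmetric model of $S_1^3$ that cyclically permutes the three boundary components. Consequently $\Phi^3$ fixes each of $b,c_1,c_2,c_3$ up to isotopy, so by the Alexander method $\Phi^3 = T_{d_1}^{m_1}T_{d_2}^{m_2}T_{d_3}^{m_3}$ for some integers $m_i$, and the threefold symmetry interchanging the $d_i$ forces $m_1=m_2=m_3=:m$.

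The final and main step is to pin down $m=1$, and this is where I expect the real difficulty to lie: the interior curve-tracking of the second step determines $\Phi^3$ only modulo the central subgroup generated by the boundary twists. To resolve this I would track the image of a single properly embedded arc $\alpha$ with endpoints on $\partial S_1^3$ and measure the net framing that $\Phi^3$ imparts to $\alpha$ in a collar of the boundary; equivalently, one may cap off two of the three boundary components to reduce to a genus-one, one-boundary configuration, where the chain relation of Proposition~\ref{prop:chain_reln} for a $2$-chain, giving $(T_{c_1}T_{b})^{6}=T_{d}$, fixes the remaining multiplicity. Either computation should yield $m=1$, and combining this with the symmetry and the isotopy-class computation gives $(T_{c_1}T_{c_2}T_{c_3}T_b)^3 = T_{d_1}T_{d_2}T_{d_3}$. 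One could instead bypass the framing bookkeeping by deriving the identity from the lantern and chain relations, but the Alexander-method argument above is the most self-contained given the tools already developed, and it fits the present theme of realizing a periodic (here order-three) symmetry as a word in Dehn twists.
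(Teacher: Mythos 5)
Your overall frame (Alexander method, interior curves determine the map up to boundary twists, then pin down the boundary multiplicities) is the same one the paper uses for its generalization in Theorem~\ref{thm:gen_starreln}; the paper does not reprove Proposition~\ref{prop:star_reln} itself but cites Gervais and later deduces it as the $g=1$ case of that theorem, whose proof is a direct twelve-twist curve chase recorded in pictures. The problem is that the shortcut you use to avoid that curve chase is based on a false identification. Set $\Phi = T_{c_1}T_{c_2}T_{c_3}T_b$. Cap off all three boundary components of $S_1^3$ (note the surface here is $S_1^3$, not $S_0^3$ as in the statement): the three disjoint spokes $c_1,c_2,c_3$ become isotopic to a single curve $c$ with $i(c,b)=1$, and $\Phi$ descends to $T_c^3T_b \in \Mod(S_1) \cong \mathrm{SL}(2,\Z)$, which acts on $H_1$ by $\left(\begin{smallmatrix} -2 & 3 \\ -1 & 1 \end{smallmatrix}\right)$ in the basis $([c],[b])$. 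This matrix has trace $-1$ and fixes no primitive class; in particular $\Phi(b) \ne b$ and $\Phi(c_1) \ne c_2$ (already in homology: $\Phi([b]) = [b] \pm([c_1]+[c_2]+[c_3])$, and $[c_1]+[c_2]+[c_3] = 3[c_1] + (\text{boundary classes}) \ne 0$ in $H_1(S_1^3)$). By contrast, the free order-three rotation that cyclically permutes the $d_i$, permutes the spokes, and fixes $b$ descends to the identity on the torus, so $\Phi$ cannot equal it even modulo boundary twists. The finite-order model underlying $\Phi$ is instead the order-three rotation of the hexagonal torus with three fixed points, which preserves each $d_i$ individually rather than permuting them. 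So the central computation of your second step, as asserted, would fail.

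Some of the surrounding structure can be salvaged, but not as written. The conclusion that $\Phi^3$ fixes $b,c_1,c_2,c_3$ up to isotopy is true, but your justification for it evaporates once $\Phi \ne \rho$; establishing it requires actually tracking the curves through all twelve twists, which is precisely the content of the paper's picture proof -- there is no symmetry shortcut here. The equal-multiplicity claim $m_1=m_2=m_3$ can be rescued: the genuine permuting rotation $\rho$ preserves the configuration ($\rho(c_i)=c_{i+1}$, $\rho(b)=b$), so $\rho\Phi\rho^{-1} = T_{c_2}T_{c_3}T_{c_1}T_b = \Phi$ since the $T_{c_i}$ commute, and conjugating $\Phi^3 = \prod T_{d_i}^{m_i}$ by $\rho$ permutes the $d_i$. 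The final capping step is also fine in outline but incomplete as stated: after capping $d_2,d_3$ the word becomes $(T_c^3T_b)^3$, not $(T_cT_b)^6$, so quoting the $2$-chain relation is not enough; you additionally need $(T_c^3T_b)^3 = (T_cT_b)^6$ in $\Mod(S_1^1) \cong B_3$, which holds because $(T_c^3T_b)^3$ lies in the kernel of $B_3 \to \mathrm{SL}(2,\Z)$ (an infinite cyclic group generated by $(T_cT_b)^6$) and has the correct exponent sum, but that is an extra argument you would have to supply.
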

\noindent In Section~\ref{sec:gen_star_method}, we will derive a generalization of this relation, which we will apply to develop a method for obtaining the word representations for a larger family of periodic maps. The final result we state in this subsection pertains to the Burkhardt \textit{handle swap} map~\cite{HB,MS1}, which swaps the $i^{th}$ handle in $S_g$ (for $g \geq 2$) with the $(i+1)^{th}$ handle.

\begin{prop}
\label{prop:hand_swap}
For $g \geq 2$, let $a_1,b_1,\ldots, a_g,b_g$ be the curves that represent the canonical generators of $H_1(S_g;\Z)$. Then for $1 \leq i \leq g-1$, the `` $i^{th}$ handle" swap map is given by 
$$H_{i+1,i} : = (T_{a_{i+1}}T_{b_{i+1}}T_{x_i}T_{a_i}T_{b_i})^3,$$
where $x_i$ is a simple closed curve that represents the homology class $a_{i+1}+b_i$.
\end{prop}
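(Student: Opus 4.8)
The plan is to localize everything to a genus-two subsurface, compute the induced symplectic map explicitly, and then upgrade the resulting homological statement to an identity of mapping classes. First I would record the chain structure underlying the word. Using $x_i \sim a_{i+1}+b_i$ together with the standard data $\langle a_j,b_j\rangle = 1$ and $\langle a_j,a_k\rangle = \langle b_j,b_k\rangle = \langle a_j,b_k\rangle = 0$ for $j \neq k$ (where $\langle\cdot,\cdot\rangle$ is the algebraic intersection pairing on $H_1(S_g;\Z)$), a direct check shows that consecutive curves in the ordered collection $\C = (a_{i+1},b_{i+1},x_i,a_i,b_i)$ meet once algebraically while non-consecutive curves have zero algebraic intersection. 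Since these lower bounds are attained by the standard picture of the two handles joined by $x_i$, the collection $\C$ is a chain of length $5$, and by Lemma~\ref{lem:exist_chains} it is the unique such chain up to homeomorphism. Its closed regular neighborhood is a subsurface $S \cong S_2^2$ with $\partial S = d_1 \sqcup d_2$, and all five twists, hence $H_{i+1,i}$, are supported in $S$. In particular $W := T_{a_{i+1}}T_{b_{i+1}}T_{x_i}T_{a_i}T_{b_i}$ is precisely the chain word $W_{\C}$, so $H_{i+1,i} = W_{\C}^{\,3}$.

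Next I would compute the symplectic representation. Applying the transvection formula $T_c(v) = v + \langle v,c\rangle c$ on the symplectic plane spanned by $a_i,b_i,a_{i+1},b_{i+1}$, I would write down the four $4\times 4$ matrices for the individual twists, form $\Psi(W)$, and cube it. The calculation yields $\Psi(W)^3 : a_i \leftrightarrow a_{i+1},\ b_i \leftrightarrow b_{i+1}$ while fixing every other standard generator; that is, $\Psi(H_{i+1,i})$ is exactly the symplectic transposition of the $i$th and $(i+1)$st handles (indeed $\Psi(W)$ turns out to have order six). The sign convention in the transvection formula is immaterial here, since the target is an involution on homology.

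Finally I would pin down the mapping class itself. Because $\C$ is a $5$-chain, the odd case of the chain relation (Proposition~\ref{prop:chain_reln}) gives $W_{\C}^{\,6} = T_{d_1}T_{d_2}$, equivalently $H_{i+1,i}^{\,2} = T_{d_1}T_{d_2}$; thus $H_{i+1,i}$ is a square root of the boundary multitwist of $S$ that realizes the handle transposition on $H_1$. Capping the two boundary circles of $S$ with disks (each carrying a marked point) to obtain $\widehat S \cong S_2$ kills $T_{d_1}$ and $T_{d_2}$, so the image of $H_{i+1,i}$ in $\Mod(\widehat S)$ is an involution inducing the handle swap on homology; I would then invoke the Alexander method (see~\cite{FM}) to identify it with Burkhardt's geometric handle swap~\cite{HB}, by checking that the full word $W_{\C}^{\,3}$ and the standard swap carry a filling system (the curves of $\C$ together with arcs transverse to $d_1,d_2$) to the same isotopy classes.

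The step I expect to be the main obstacle is this last one. The homological computation and the chain relation together only exhibit $H_{i+1,i}$ as a square root of $T_{d_1}T_{d_2}$ that induces the swap on $H_1$, and homology does not determine an element of $\Mod(S)$; the genuine content is the Alexander-method bookkeeping of the images of the chain curves under $W_{\C}^{\,3}$ \emph{as isotopy classes}, not merely as homology classes. What makes this tractable is exactly the relation $H_{i+1,i}^{\,2} = T_{d_1}T_{d_2}$, which reduces the identification to recognizing an involution in the capped genus-two surface, where such maps are completely understood.
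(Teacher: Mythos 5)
The paper does not actually prove this proposition: it is stated as a known result of Burkhardt and supported only by the citation~\cite{HB}, so there is no in-paper argument to measure you against. Judged on its own, your plan is the natural one and is consistent with how the paper treats neighbouring facts: recognizing $W:=T_{a_{i+1}}T_{b_{i+1}}T_{x_i}T_{a_i}T_{b_i}$ as the word $W_{\C}$ of a $5$-chain, so that Proposition~\ref{prop:chain_reln} gives $H_{i+1,i}^{2}=W_{\C}^{6}=T_{d_1}T_{d_2}$, is exactly the content the paper itself records in Lemma~\ref{lem:S22}, and the Alexander-method endgame mirrors the paper's own proof of Theorem~\ref{thm:gen_starreln}. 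Your symplectic computation is correct: with $T_c(v)=v+\langle v,c\rangle c$ one gets $W(a_i)=b_i$, $W(b_i)=-a_i-b_{i+1}$, $W(a_{i+1})=b_{i+1}$, $W(b_{i+1})=-a_{i+1}-b_i$, and indeed $W^{3}$ transposes $a_i\leftrightarrow a_{i+1}$, $b_i\leftrightarrow b_{i+1}$.

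That said, as written the proposal is a plan rather than a proof, and you correctly locate where the content lives. The homology computation plus the relation $H_{i+1,i}^{2}=T_{d_1}T_{d_2}$ only exhibit $W_{\C}^{3}$ as \emph{some} square root of the boundary multitwist inducing the swap on $H_1(S_g;\Z)$; nothing before the final step rules out other such roots (e.g.\ ones differing by elements of the Torelli group of $S_2^2$), and the Alexander-method check that would close this is described but never carried out. Two smaller points: Lemma~\ref{lem:exist_chains} as stated only covers chains of length $2g$ and $2g+1$ in $S_g$, so it does not literally apply to your $5$-chain when $g>2$ --- you need the general change-of-coordinates statement for chains from~\cite{FM}, or simply the explicit standard picture, which also supplies the geometric (not merely algebraic) intersection data you assert for the non-consecutive pairs. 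And after capping with marked disks, identifying the resulting order-$\leq 2$ class with the rotation of $S_2$ through two fixed points still requires either the Alexander computation or an appeal to the classification of involutions of the marked genus-two surface; "involution inducing the swap on homology" alone determines the class only up to conjugacy, whereas the proposition asserts an equality of specific mapping classes supported in the subsurface.
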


\subsection{Periodic maps on the torus as words in Dehn twists}\label{sec:per_torus} Since $\Mod(S_1) \cong \text{SL}(2,\Z) = \Z_4 \ast_{\Z_2} \Z_6$, any non-trivial periodic element in $\Mod(S_1)$ is of order $2$, $3$, $4$, or $6$. Moreover, since $\{a,b\}$ (as indicated in Figure~\ref{hom_torus} below) is a chain in $S_1$, it follows by Proposition~\ref{prop:chain_reln} that $T_aT_b$ is of order $6$, and $T_a^2 T_b$ is of order $4$ in $\Mod(S_1)$. Note that $T_aT_b$ (resp. $T_a^2T_b$) is represented by a rotation of a regular hexagon (resp. square) with opposite sides identified, by $2 \pi/6$ (resp. $\pi/2)$. 
\begin{figure}[H]
\includegraphics[width=25ex]{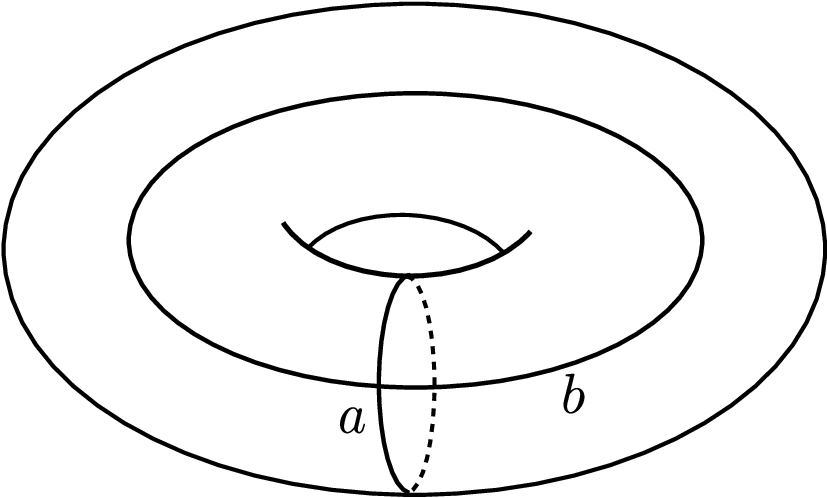}
\caption{A chain in the torus.}
\label{hom_torus}
\end{figure}
\noindent Taking the powers of these maps, we obtain a word $\W(F)$ (in Dehn twists) representing the conjugacy class of each periodic element $F \in \Mod(S_1)$.

\begin{table}[H]
\begin{center}
\begin{tabular}{|c|c|c|}
\hline	$|F|$ & $D_F$ & $\W(F)$\\
\hline	6 & $(6,0;(1,2),(1,3),(1,6))$ & $T_aT_b$  \\
\hline    6 & $(6,0;(1,2),(2,3),(5,6))$ & $(T_aT_b)^5$  \\
\hline    4 & $(4,0;(1,2),(1,4),(1,4))$  &  $T_a^2T_b$\\
\hline    4 & $(4,0;(1,2),(3,4),(3,4))$  &  $(T_a^2T_b)^3$\\
\hline	3 & $(3,0;(1,3),(1,3),(1,3))$  & $(T_aT_b)^2$\\
\hline    3 & $(3,0;(2,3),(2,3),(2,3))$  & $(T_aT_b)^4$\\
\hline	2 & $(2,0;((1,2),4))$ & $(T_aT_b)^3$\\
\hline
\end{tabular}
\end{center}
\caption{Words (in Dehn twists) representing the conjugacy classes of periodic elements in $\Mod(S_1)$.}
\label{tab:torus_words}
\end{table}

\section{Rotational mapping classes as words in Dehn twists}\label{sec:rotation_word}
In this section, we will provide a method for writing rotational mapping classes as products of Dehn twists. The key idea is to write given rotational mapping class as a product of two involutions, whose representations (as words) will be discussed in the following subsection.

\subsection{Non-free involutions as words in Dehn twists}
By Proposition~\ref{prop:rotl_actn}, given an arbitrary involution $F \in \Mod(S_g)$, $D_F$ has one of the following forms:
$$(2,g_0;((1,2),2k)) \text{ or } (2,(g+1)/2,1;),$$ depending on whether $\F$ is non-free or free. First, we consider the cases $g=1,2$, where there are three possible conjugacy classes of involutions. 
\begin{enumerate}[(a)]
	\item The hyperelliptic involution in $\Mod(S_1)$: $(2,0;((1,2),4))$.
	\item The hyperelliptic involution in $\Mod(S_2)$:  $(2,0;((1,2),6))$.
	\item The rotation of $S_2$ with two fixed points: $(2,1;((1,2),2))$.
\end{enumerate}
The word representation for the involution in $(a)$ was featured in Table~\ref{tab:torus_words}, while the word for (b), the hyperelliptic involution, is known from Proposition~\ref{prop:hyp_relns}. Since $(c)$ swaps the two genera of $S_2$, it is the map $H_{2,1}$ from Proposition~\ref{prop:hand_swap}. We will collectively call involutions in (a) and (c) (above) as the \textit{fundamental involutions}. We will show that an arbitrary involution can be obtained by piecing together the fundamental involutions via $1$-compatibilities. We will require the following lemmas, which are simple consequences of Proposition \ref{prop:chain_reln}. 

\begin{lemma}
	\label{lem:S22}
	Let \(\overline{H_{2,1}}\) be the restriction of \(H_{2,1}\) on \(S_2^2\). Then, \[\overline{H_{2,1}}^2 = T_{d_1} T_{d_2},\] where \(d_1, d_2\) are the boundary curves of \(S_2^2\) as shown in Figure \ref{fig:inv_S22} below.
	\begin{figure}[H]
	\labellist
	\tiny
		\pinlabel $\textcolor{black}{d_1}$ at 400 190
		\pinlabel $\textcolor{black}{d_2}$ at 410 118
		\pinlabel $\textcolor{black}{a_1}$ at 240 110
		\pinlabel $\textcolor{black}{b_1}$ at 110 148
		\pinlabel $\textcolor{black}{x_1}$ at 140 75
		\pinlabel $\textcolor{black}{a_2}$ at 520 110
		\pinlabel $\textcolor{black}{b_2}$ at 660 148
		\endlabellist
	\includegraphics[width=35ex]{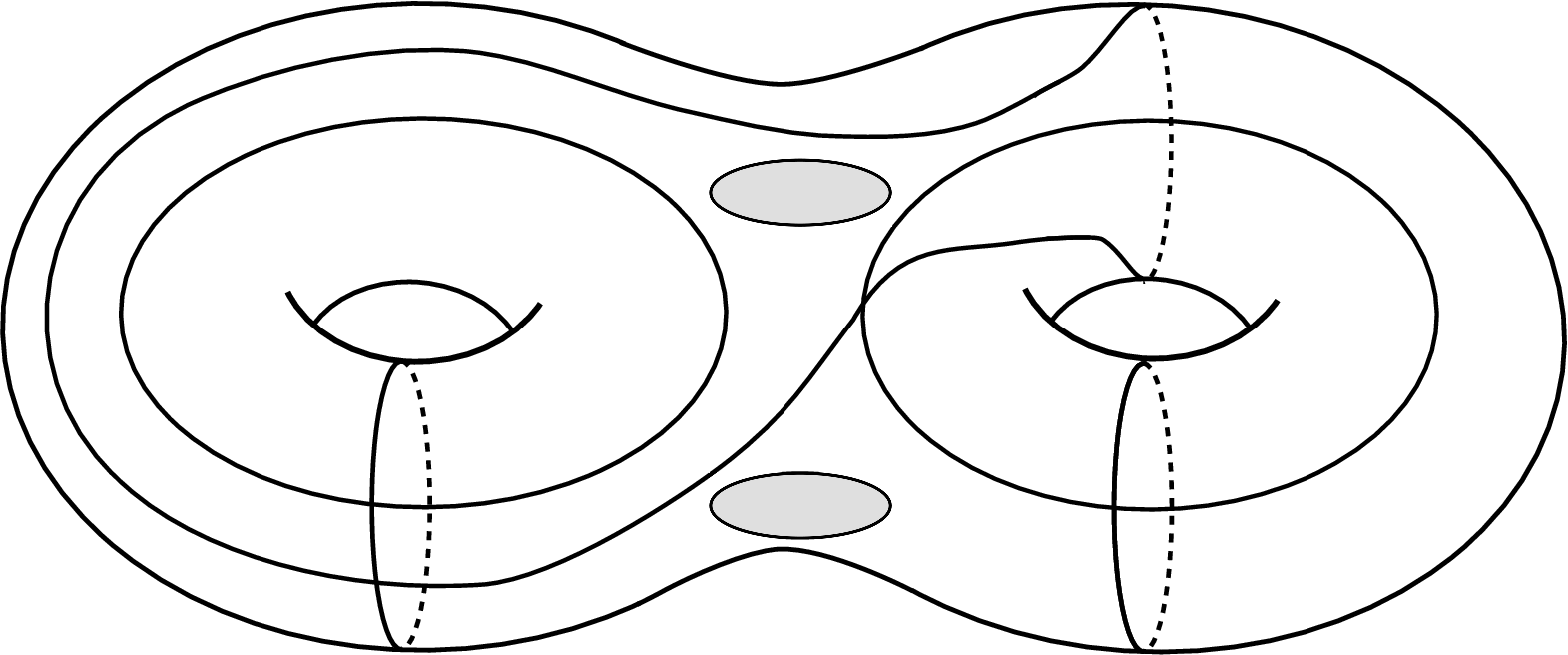}
	\caption{The boundary curves $d_1,d_2$ in \(S_2^2.\)}
	\label{fig:inv_S22}
\end{figure}
\end{lemma}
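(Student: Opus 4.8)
The plan is to recognize that the five curves defining the handle swap form a chain of length five, so that $\overline{H_{2,1}}^2$ is precisely the power of the associated chain word to which the odd case of Proposition~\ref{prop:chain_reln} applies. First I would check that the ordered collection $(a_2,b_2,x_1,a_1,b_1)$ appearing in $H_{2,1}=(T_{a_2}T_{b_2}T_{x_1}T_{a_1}T_{b_1})^3$ is a chain $\C$ of length $5$ in $S_2$. The only intersections requiring attention are those involving $x_1$: since $x_1$ represents the homology class $a_2+b_1$, the algebraic intersection numbers are $\hat{i}(x_1,b_2)=\hat{i}(x_1,a_1)=\pm 1$ and $\hat{i}(x_1,a_2)=\hat{i}(x_1,b_1)=0$, and with $x_1$ drawn in minimal position as in the construction underlying Proposition~\ref{prop:hand_swap} these agree with the geometric intersection numbers $i(x_1,b_2)=i(x_1,a_1)=1$ and $i(x_1,a_2)=i(x_1,b_1)=0$. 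Together with $i(a_2,b_2)=i(a_1,b_1)=1$, this shows that $\C$ is a chain, and that its word is $W_{\C}=T_{a_2}T_{b_2}T_{x_1}T_{a_1}T_{b_1}$.

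Next I would identify the regular neighborhood of $\C$. By the standard fact recalled just before Proposition~\ref{prop:chain_reln}, a chain of odd length $k=5=2\cdot 2+1$ has a closed regular neighborhood with two boundary components and genus $(k-1)/2=2$, hence homeomorphic to $S_2^2$; I take its two boundary curves to be the $d_1,d_2$ of Figure~\ref{fig:inv_S22}. Because $W_{\C}$ is supported on this neighborhood, so is $H_{2,1}=W_{\C}^{3}$, and therefore the restriction $\overline{H_{2,1}}$ to $S_2^2$ is well defined and equals $W_{\C}^{3}$.

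Finally, applying the odd case of the chain relation (Proposition~\ref{prop:chain_reln}) with $k=5$ gives $W_{\C}^{\,6}=T_{d_1}T_{d_2}$, so that
$$\overline{H_{2,1}}^2 = W_{\C}^{\,6} = T_{d_1}T_{d_2},$$
which is the claim. As a sanity check, capping $d_1$ and $d_2$ with disks to pass to the closed surface $S_2$ kills $T_{d_1}$ and $T_{d_2}$, recovering the fact that $H_{2,1}$ is an involution.

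The only genuine obstacle is the intersection bookkeeping in the first step: one must ensure that the chosen representative $x_1$ meets $b_1$ and $a_2$ \emph{geometrically} zero times, since vanishing of the algebraic intersection number alone does not force geometric disjointness. This is guaranteed by the explicit placement of $x_1$ in the handle-swap construction, after which the lemma reduces to a single invocation of the chain relation.
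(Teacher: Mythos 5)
Your proof is correct and takes essentially the same approach as the paper: the paper gives no written argument beyond asserting that the lemma is a simple consequence of the chain relation (Proposition~\ref{prop:chain_reln}), and your verification that $(a_2,b_2,x_1,a_1,b_1)$ is a $5$-chain whose regular neighborhood is $S_2^2$, followed by the odd case of that relation with $k=5$, is exactly the omitted computation. Your care in checking geometric rather than merely algebraic intersection numbers for $x_1$ is a detail the paper leaves implicit, and you handle it correctly.
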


\begin{proof}
First, we note that the boundary of a regular neighborhood of the chain  $(a_2,b_2,x_1,a_1,b_1)$ is homotopic to $d_1 \sqcup d_2$, as indicated in Figure~\ref{fig:inv_S22}. Thus, by Proposition~\ref{prop:chain_reln}, we have $(T_{a_2}T_{b_2}T_{x_1}T_{a_1}T_{b_1})^6=T_{d_1}T_{d_2}$, from which the assertion follows. 
\end{proof}

\noindent From a similar argument as above, we have the following lemma. 

\begin{lemma}
	\label{lem:S12}
	Let $a,b,a',d_1,d_2$ be the curves in $S_1^2$ as indicated in the Figure \ref{fig:inv_S12} below. 
	\begin{figure}[H]
	\labellist
		\tiny
		\pinlabel $\textcolor{black}{a'}$ at 178 220
		\pinlabel $\textcolor{black}{a}$ at 178 45
		\pinlabel $\textcolor{black}{b}$ at 260 55
		\pinlabel $\textcolor{black}{d_2}$ at 347 165
		\pinlabel $\textcolor{black}{d_1}$ at 60 165
		\endlabellist
	\includegraphics[width=25ex]{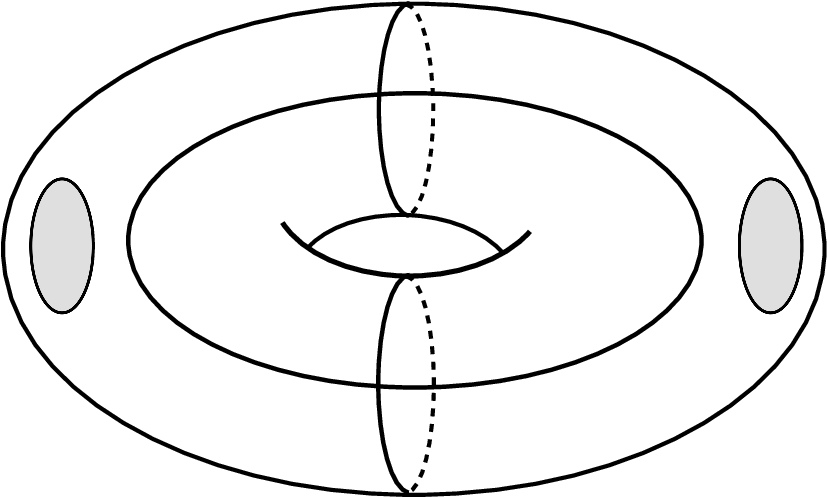}
	\caption{The curves $a,b,a',d_1,d_2$ in $S_1^2$.}
	\label{fig:inv_S12}
\end{figure}
Then 
 $$(T_a T_b T_{a'})^4 = T_{d_1}T_{d_2}.$$
\end{lemma}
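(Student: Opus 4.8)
The plan is to recognize the triple $\{a,b,a'\}$ as a chain of length three in $S_1^2$ and then invoke the odd case of the chain relation directly. First I would read off the geometric intersection data from Figure~\ref{fig:inv_S12}: the curves $a$ and $a'$ are disjoint nonseparating curves running across the single handle, while $b$ meets each of them exactly once. Thus $i(a,b)=i(b,a')=1$ and $i(a,a')=0$, so in the ordering $(c_1,c_2,c_3)=(a,b,a')$ the collection $\C=\{a,b,a'\}$ satisfies the defining conditions of a chain, with $b$ as the middle curve.

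Next I would identify the regular neighborhood of the chain. A closed regular neighborhood of $a\cup b\cup a'$ is a subsurface of genus $\lfloor 3/2\rfloor = 1$, and since the chain has odd length $3$ it has exactly two boundary components. Comparing with Figure~\ref{fig:inv_S12}, this neighborhood fills all of $S_1^2$, and its two boundary curves are precisely $d_1$ and $d_2$. In other words, $S_1^2$ is itself the regular neighborhood of the length-three chain $\C$, with $\partial S_1^2$ represented by $d_1 \sqcup d_2$.

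With this identification in place, the statement becomes an immediate application of the odd case of the chain relation (Proposition~\ref{prop:chain_reln}) with $k=3$: taking $(c_1,c_2,c_3)=(a,b,a')$ gives $(T_a T_b T_{a'})^{k+1} = (T_a T_b T_{a'})^4 = T_{d_1}T_{d_2}$, as claimed. No further algebra is needed, since the relation is quoted as a known fact.

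Because the entire configuration is pinned down by Figure~\ref{fig:inv_S12}, the only genuine point requiring care is the bookkeeping: confirming that the two boundary curves produced by the chain relation match the labels $d_1,d_2$ in the figure (rather than being swapped or isotopic to some other boundary components), and that the left-handed twist convention of Proposition~\ref{prop:chain_reln} agrees with the convention used here. I expect this matching of the boundary to be the only mild obstacle; everything else is subsumed by the chain relation.
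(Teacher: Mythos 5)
Your proposal is correct and is exactly the argument the paper intends: the paper states this lemma as ``a simple consequence of Proposition~\ref{prop:chain_reln},'' and your identification of $(a,b,a')$ as a length-three chain whose regular neighborhood is all of $S_1^2$ with boundary $d_1\sqcup d_2$, followed by the odd case of the chain relation with $k=3$, is precisely that consequence.
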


\noindent We will now provide an algorithm for writing involutions as words in the Dehn twists. 

\begin{algo}
\label{alg:inv}
Let $F \in \Mod(S_g)$ be a non-free involution with $D_F = ((2,g_0;((1,2),2k))$ (by virtue of Lemma~\ref{prop:rotl_actn}).
\begin{enumerate}[\textit{Step} 1.]
	\item If $k=1$, then: 
	\begin{enumerate}
	\item[\textit{Step} 1a.] We decompose $\F$ into fundamental involutions as shown in the Figure~\ref{fig:inv_tower2}.
	\begin{figure}[H]
	\includegraphics[width=35ex]{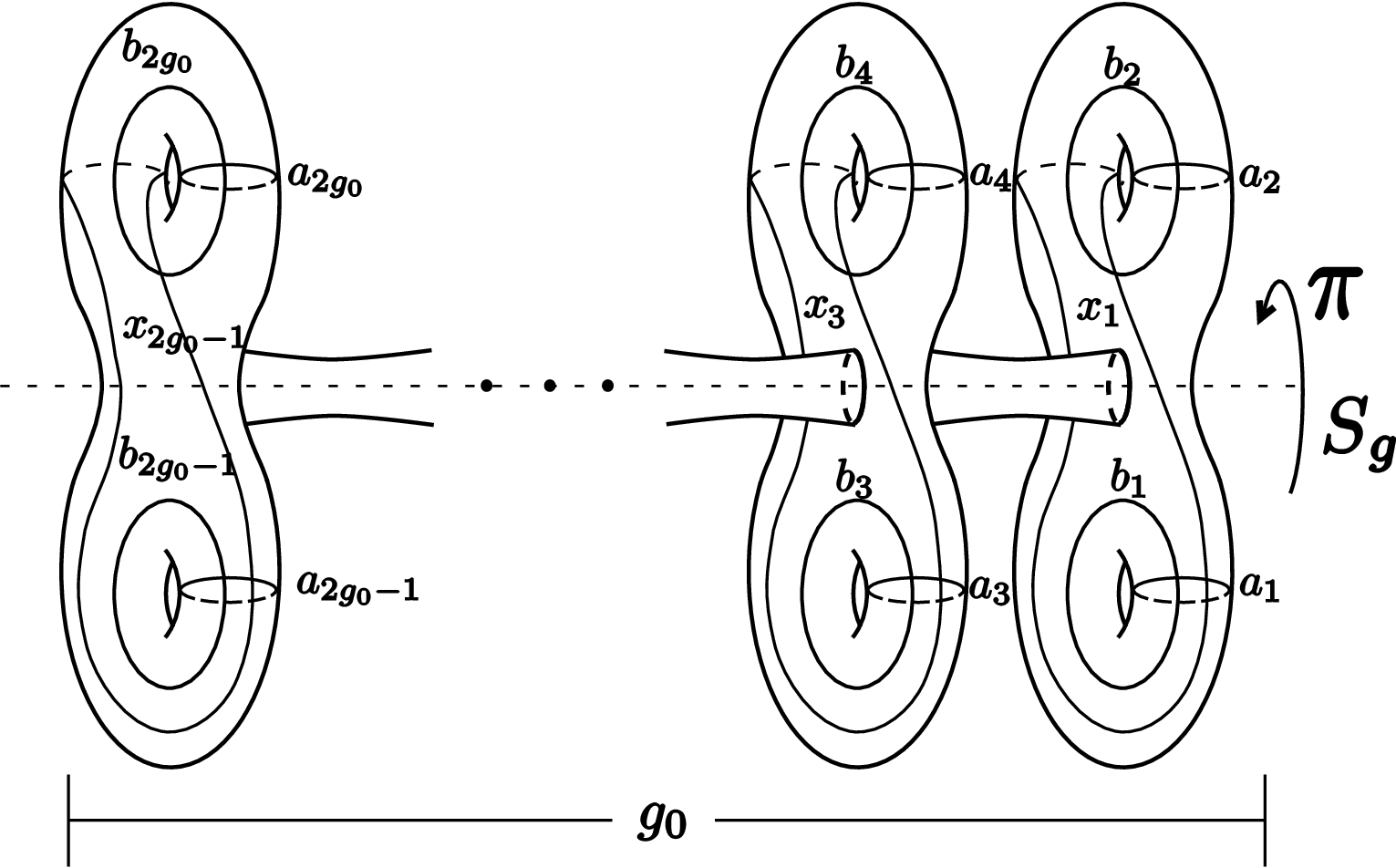}
	\caption{Decomposition of $\F$ for $k=1$ into fundamental involutions.}
	\label{fig:inv_tower2}
	\end{figure}
\item[\textit{Step} 1b.] We set 
$$\W(F) = \displaystyle \prod_{i = 1}^{g_0} (H_{2i,2i-1})^{(-1)^{i-1}}.$$
\end{enumerate}
\item If $k > 1$, then:
\begin{enumerate}[\textit{Step} 2a.]
	\item[\textit{Step} 2a.] We decompose $\F$ into fundamental involutions as shown in the Figure~\ref{fig:inv_tower}. 
	\begin{figure}[H]
	\centering
	\includegraphics[width=55ex]{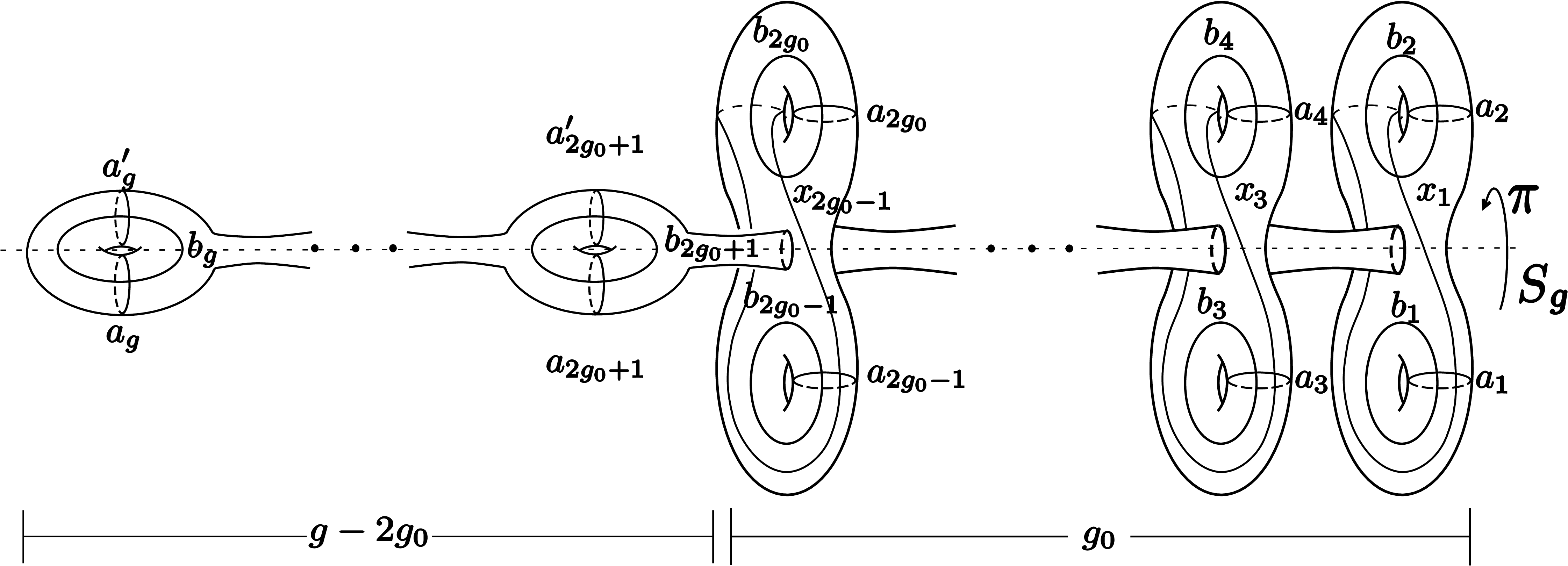}
	\caption{Decomposition of $\F$ for $k >1$ into fundamental involutions.}
	\label{fig:inv_tower}
\end{figure}
\item[\textit{Step} 2b.] We set
$$\W(F)=\displaystyle \prod_{i = 1}^{g_0} (H_{2i,2i-1})^{(-1)^{i-1}}\prod_{j = 2g_0+1}^{g} (T_{a_j}T_{b_j}T_{a_j'})^{2(-1)^{j+g_0-1}},$$ 
\end{enumerate}
\item By Proposition~\ref{prop:hand_swap} and Lemmas~\ref{lem:S22}-\ref{lem:S12}, $\W(F)$ is the desired representation of $F$ as a word in Dehn twists, up to conjugacy.
\end{enumerate}
\end{algo}

\noindent A simple computation reveals that in general, Algorithm~\ref{alg:inv} will express an arbitrary non-free involution (up to conjugacy) as a word in $3g-2+ \lfloor g/2 \rfloor$ Dehn twists about nonseparating curves. However, it is important to note each application of Algorithm~\ref{algo:surf_rotn} may involve up to $3g-2+{g \choose 2}$ (distinct) Dehn twists about nonseparating curves. 

\subsection{Surface rotations as words in Dehn twists}  For $g \geq 2$, any rotation of $S_g$ (that is free or non-free) of order $n \geq 2$ can be written as a product of two involutions, as illustrated in Figure~\ref{fig:two_inv_rot}. 
\begin{figure}[htbp]
\includegraphics[width=25ex]{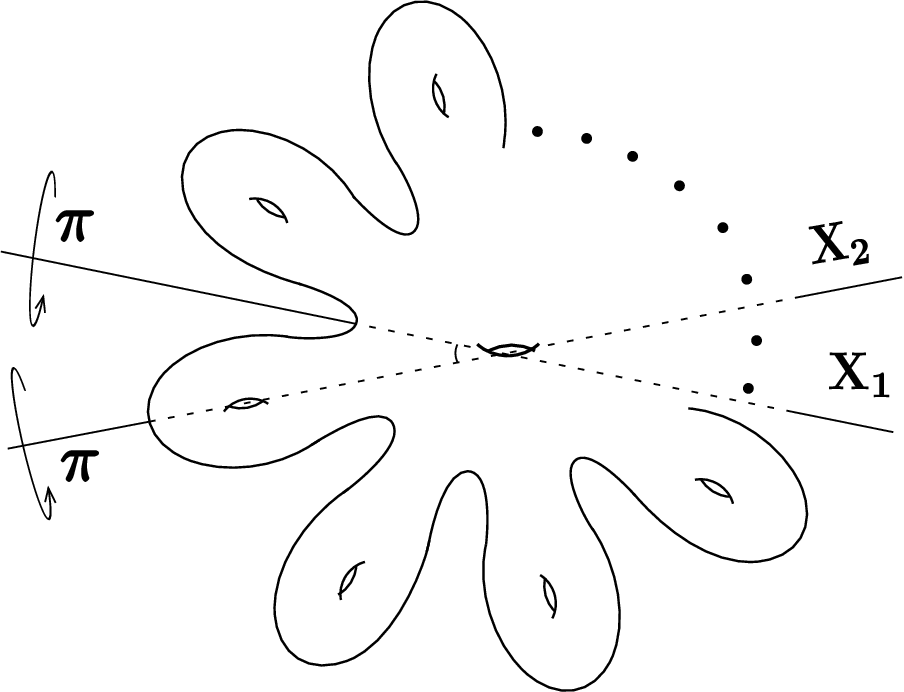}
\caption{A surface rotation as a product of two involutions.}
\label{fig:two_inv_rot}
\end{figure}
\noindent This leads to the following method for writing surface rotations as words in Dehn twists. 

\begin{algo}
\label{algo:surf_rotn}
Let $F \in \Mod(S_g)$ be realized as a rotation $\F$ of order $n > 2$ or as a free involution. Then by Proposition~\ref{prop:rotl_actn}, $D_F$ has the form $$(n,g_0;\underbrace{(s,n),(n-s,n),\ldots,(s,n),(n-s,n)}_{k \,pairs}) \text{ or } (n,\frac{g-1}{n}+1,r;),$$ depending on whether $\F$ is free rotation or not.
\begin{enumerate}[\textit{Step} 1.]
	\item  Consider an embedding of $S_g$ in $\mathbb{R}^3$, as indicated in Figure~\ref{fig:two_inv_rot}, where there is a ``genus in the middle", only when $\F$ is free.
	\item For $i =1,2$, let the reflection along the axis $X_i$ (as shown in the figure) be $\Theta_i$, where $\Theta_i$ is a non-free involution determined by Algorithm~\ref{alg:inv}. We set $R_g = \Theta_1 \cdot \Theta_2$. 
	\item If $\F$ is free, then we set $\W(F) = R_g^{r(g-1)/n}$, else we set $\W(F) = R_g^{\frac{gs^{-1}}{n}}$.
	\item $\W(F)$ is the desired representation of $F$ as a word in Dehn twists, up to conjugacy.
\end{enumerate}
\end{algo}

\section{Chain method}
\label{sec:chain_method}
 In this section, we provide a method by which one can write certain periodic mapping classes as words in Dehn twists by repeated application of the chain relation. Let $F \in \Mod(S_g)$ be an irreducible Type 1 mapping class. Let $F \in \Mod(S_g)$ be of order $n$, and let $(1,n)$ be a pair in $D_F$ representing a fixed point of the $\langle \F \rangle$-action on $S_g$. Now consider the mapping class $F^{m}$, for some integer $1 \leq  m \leq |F|$. Then in $D_{F^m}$, there exists a pair $(c',n')$ (representing a fixed point of the $\langle \F^m \rangle$-action on $S_g$) that originated from the pair $(1,n)$ such that $n' = |F^m| = n/\gcd(m,n)$ and $(c')^{-1} \equiv m/\gcd(m,n) \pmod{n'}$. We will denote this pair $(c',n')$ in $D_{F^m}$ by $(1,n)_{m,F}$.

\begin{defn}
\label{defn:chain_real}
Let $F \in \Mod(S_g)$ be realizable as a linear $s$-tuple $(F_1,\ldots,F_s)$ of degree $n$ and genus $g$ as in Definition~\ref{defn:compk_tuple}. Then $F$ is said to be \textit{chain-realizable} if 
$F$ admits a realization as a linear $s$-tuple $(F_1,\ldots,F_s)$ of genus $g$ such that the following conditions hold. 
\begin{enumerate}[(i)]
\item For $1 \leq i \leq s$, there exists an irreducible Type 1 mapping class $\tilde{F}_i \in \Mod(S_{g_i})$, a filling chain $\C(\tilde{F}_i)$ in $S_{g_i}$, and an $m_i \geq 1$ such that $F_i$ is conjugate to $(W_{\C(\tilde{F}_i)})^{m_i}$. Then:
\begin{enumerate}
\item For each $i$, $D_{\tilde{F}_i}$ has one of the following forms on $S_{g_i}$
\begin{enumerate}[1.]
\item $(2{g_i}+1,0;(2{g_i}-1,2{g_i}+1),(1,2{g_i}+1),(1,2{g_i}+1))$
\item $(2{g_i}+2,0;({g_i},{g_i}+1),(1,2{g_i}+2),(1,2{g_i}+2))$,
\item $(4{g_i},0;(1,2),(1,4{g_i}),(2{g_i}-1,4{g_i}))$, 
\item $(4{g_i}+2,0;(1,2),({g_i},2{g_i}+1),(1,4{g_i}+2))$
\end{enumerate}
\item  For $1 \leq i \leq s-1$,  $k_i =1$, and for each pair $(F_i,F_{i+1})$, the $1$-compatibility is across a pair of fixed points represented by pairs of the form $(c_i,n)$ (in $D_{F_i}$) and $(n-c_i,n)$ (in $D_{F_{i+1}}$), where $(c_i,n) = (1, |\tilde{F}_i|)_{m_i,\tilde{F}_i} $ and $(n-c_i,n) =  (1, |\tilde{F}_{i+1}|)_{{m_{i+1},\tilde{F}_{i+1}}}$.
\end{enumerate}
\end{enumerate}
\end{defn}

\noindent It is worth mentioning here that the factorizations of the mapping classes listed in (i)(a) Definition~\ref{defn:chain_real} were first derived in~\cite{MI}.
\begin{defn}
\label{defn:chain_realgen}
A periodic mapping class $G \in \Mod(S_g)$ is said to be \textit{chain-realizable} if there exists a chain-realizable linear $s$-tuple $F \in \Mod(S_g)$ and a nonzero integer $q$ such that $G = F^q$.
\end{defn}

\noindent Given $c \in \mathbb{Z}_n^{\times}$, we will fix the following notation. 
\begin{enumerate}[(a)]
\item $c^+ = c(+1) := \{d \in \Z : cd \equiv 1 \pmod{n}\} \cap [0,n].$
\item $c^- = c(-1) := \{d \in \Z : cd \equiv 1 \pmod{n}\} \cap [-n,0].$
\end{enumerate}

\begin{lemma}
\label{lem:chain_real}
Let $F \in \Mod(S_g)$ be realizable as a chain-realizable $s$-tuple of degree $n$ and genus $g$ as in Definition~\ref{defn:chain_real}. For all $i$, let $\W(F_i)=(W_{\C(\tilde{F}_i )})^{\beta_i \bar{c_i}}$, where $\bar{c}_i = c_i((-1)^{i+1})$ and $\beta_i=\frac{|\tilde{F}_i|}{|F_i|}$. Then:
$$\W(F) = \prod_{i=1}^s \W(F_i)$$ is conjugate to $F$.
\end{lemma}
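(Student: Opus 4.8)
The plan is to reduce the claim to the statement that $\W(F)=\prod_{i=1}^s\W(F_i)$ is a \emph{periodic} mapping class realizing the same $\Z_n$-action as $\F$, after which Proposition~\ref{prop:ds-action} (data sets classify cyclic actions up to conjugacy) finishes the argument. The starting point is that, by the chain relation (Proposition~\ref{prop:chain_reln}) and Theorem~\ref{res:1}, each $W_{\C_i}$ realizes $\tilde F_i$ as a rotation of the regular neighborhood $N_i$ of the filling chain $\C_i$, and $(W_{\C_i})^{|\tilde F_i|}$ equals the product of Dehn twists about the boundary components of $N_i$. In $S_g$ these boundary components are precisely the compatibility curves $e_{i-1},e_i$ produced by the two adjacent $1$-compatibilities, where a boundary that is capped by a disk in the two end pieces contributes a trivial twist.

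First I would check that each factor realizes $F_i$ on the corresponding closed piece. Writing $F_i=(W_{\C_i})^{m_i}$ with fixed-point pair $(c_i,n)=(1,|\tilde F_i|)_{m_i,\tilde F_i}$, the definition of this power-pair gives $\beta_i=\gcd(m_i,|\tilde F_i|)$ and $c_i^{-1}\equiv m_i/\beta_i\pmod n$, whence $\beta_i\bar c_i\equiv\beta_i c_i^{-1}\equiv m_i\pmod{|\tilde F_i|}$ because $\bar c_i\equiv c_i^{-1}\pmod n$. Thus, capping the boundary of $N_i$ with disks, $(W_{\C_i})^{\beta_i\bar c_i}=F_i$ in $\Mod(S_{g_i})$; in particular it carries the prescribed local rotation $2\pi c_i^{-1}/n$ and agrees with $\F_i|_{\Sigma_i(F)}=\F|_{\Sigma_i(F)}$ by Definition~\ref{defn:compk_tuple}(ii).

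The heart of the proof is the boundary-twist bookkeeping in $\Mod(S_g)$, where the chain does not fill. Since $c_i^-=c_i^+-n$, the sign convention $\bar c_i=c_i((-1)^{i+1})$ means that the even-indexed factors carry an extra $(W_{\C_i})^{-|\tilde F_i|}=T_{e_{i-1}}^{-1}T_{e_i}^{-1}$ while the odd-indexed factors do not. The supports of the various $(W_{\C_i})^{\beta_i c_i^{+}}$ overlap only in the collars of the $e_j$, so after commuting these boundary twists to the right I would obtain $\W(F)=\big(\prod_{i}(W_{\C_i})^{\beta_i c_i^{+}}\big)\prod_{j}T_{e_j}^{-1}$, and the key combinatorial point is that each compatibility curve $e_j$ receives exactly one inverse twist: among the two pieces meeting at $e_j$ exactly one is even, and the boundary pairs $\{e_{2k-1},e_{2k}\}$ of the even pieces partition $\{e_1,\dots,e_{s-1}\}$.

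Finally I would identify this single corrective twist $T_{e_j}^{-1}$ at each $e_j$ with the twisting by which a $1$-compatibility gluing differs from the naive superposition of the two piecewise rotations; this is exactly what makes the composite of finite order. With the correction in place, the product acts as $\F_i=\F$ on each $\Sigma_i(F)$, and at every glued fixed point the two adjacent factors contribute the opposite local rotations recorded by the pairs $(c_i,n)$ and $(n-c_i,n)$ prescribed in Definition~\ref{defn:chain_real}, summing to $0\pmod{2\pi}$ as the compatibility requires. Hence $\W(F)$ realizes the $\Z_n$-action $\F$ and has data set $D_F$, so it is conjugate to $F$ by Proposition~\ref{prop:ds-action}. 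I expect the main obstacle to be this last bookkeeping step: correctly fixing the orientations and signs of the boundary twists in the chain relation across the four allowed data-set types for $\tilde F_i$ (including the squared leading twist in the even-order cases), verifying that the disk-capped end boundaries are genuinely trivial, and justifying that the single twist per compatibility curve is precisely the finite-order correction rather than an obstruction.
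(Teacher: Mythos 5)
Your proposal is correct and follows essentially the same route as the paper's proof: both rest on the chain relation identifying $(W_{\C(\tilde F_i)})^{|\tilde F_i|}$ with the boundary twists of $N(\C(\tilde F_i))$, and on the alternating sign convention $\bar c_i = c_i((-1)^{i+1})$ together with the compatibility $c_i + c_{i+1}\equiv 0 \pmod n$ forcing the twists at each gluing curve to cancel, so that $\W(F)$ has order $n$ and restricts to $\F_i$ on each $\Sigma_i(F)$. Your version spells out the bookkeeping (the congruence $\beta_i c_i^{+}\equiv m_i \pmod{|\tilde F_i|}$ and the explicit factoring of one corrective boundary twist per compatibility curve) that the paper leaves implicit, but the underlying argument is the same.
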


\begin{proof}
By Proposition~\ref{prop:chain_reln}, for each $i$, $((W_{\C(\tilde{F}_i )})^{\beta_i \bar{c_i}})^{|F_i|}$ 
equals either $(T_{d_1}T_{d_2})^{\bar{c_i}}$ or $(T_{d_1})^{\bar{c_i}}$, depending upon whether $|\C(\tilde{F}_i )|$ is odd or even. Thus, $\bar{c_i}$ measures the amount of twisting along the boundary of a closed neighborhood of the chain $\C(\tilde{F}_i )$. 
Thus, by Construction~\ref{cons:k-comp} and Definition~\ref{defn:chain_real}, we have that $\W(F)$ is conjugate of $F$. Finally, since $\W(F_i)$ commutes with $\W(F_j)$ for all $1\leq i,j \leq s$, we have $$(\W(F))^n = (\prod_{i=1}^s \W(F_i))^n=\prod_{i=1}^s \W(F_i)^n=\prod_{i=1}^s(T_{d_{i1}}T_{d_{i2}})^{\bar{c}_i}=1,$$ where $d_{i2}$ is taken to be the trivial curve when $i=1,s$.
\end{proof}

\noindent We will now provide an algorithm for representing a chain-realizable periodic mapping classes as words in Dehn twists. 

\begin{algo}[Chain method]
\label{algo:chain_method}
Let $G \in \Mod(S_g)$ be a chain-realizable periodic mapping class.
\begin{enumerate}[\textit{Step} 1.]
\item Write $G = F^q$, where $F$ is a compatible chain-realizable $s$-tuple $(F_1,\ldots,F_s)$ of degree $n$ and genus $g$ as in Definition~\ref{defn:chain_real}. 
\item Set $\W(F_i)=(W_{\C(\tilde{F}_i )})^{\beta_i \bar{c_i}}$, where $\bar{c}_i = c_i((-1)^{i+1})$, and set
$$\W(F) = \prod_{i=1}^s \W(F_i).$$
\item By Lemma~\ref{lem:chain_real}, $\W(G)=\W(F)^q$ is the desired representation of $G$ as a word in Dehn twists, up to conjugacy.
\end{enumerate}
\end{algo}

\begin{exmp}
For $i =1,2$, consider the order $6$ mapping classes $F_i \in \Mod(S_1)$ with $D_{F_1}= (6,0;(1,2),(1,3),(1,6))$ and $D_{F_2} = (6,0;(1,2),(2,3),(5,6))$. The $\F_i$ admit a $1$-compatibility along a pair of compatible fixed points that correspond to the pairs $(1,6)$ and $(5,6)$ in the $D_{F_i}$ where the induced rotation angles are $2\pi/6$ and $10\pi/6$, respectively. This $1$-compatibility yields 
an $F=(F_1,F_2) \in \Mod(S_2)$ with $D_F = (6,0;(1,2),(1,2),(1,3),(2,3))$. If $\C(F_1) = \{a_1,b_1\}$ and $\C(F_2) = \{a_2,b_2\}$, then by Table~\ref{tab:torus_words} and Algorithm~\ref{algo:chain_method}, $F$ is represented up to conjugacy by the word
$$\W(F)=(T_{a_1}T_{b_1})(T_{a_2}T_{b_2})^{-1}.$$
\end{exmp}

\subsection{Periodic maps on $S_2$ as words in Dehn twists} Let $a_1,b_1,c_1,a_2,b_2,$ and $x_1$ be curves in $S_2$, as indicated in Figure~\ref{fig:genus2_curves} below. 

\begin{figure}[H]
\labellist
\small
\pinlabel $\textcolor{black}{c_1}$ at 395 195
\pinlabel $\textcolor{black}{x_1}$ at 405 105
\pinlabel $\textcolor{black}{a_1}$ at 173 115
\pinlabel $\textcolor{black}{a_2}$ at 520 115
\pinlabel $\textcolor{black}{b_1}$ at 273 123
\pinlabel $\textcolor{black}{b_2}$ at 630 129
\endlabellist
\includegraphics[width=40ex]{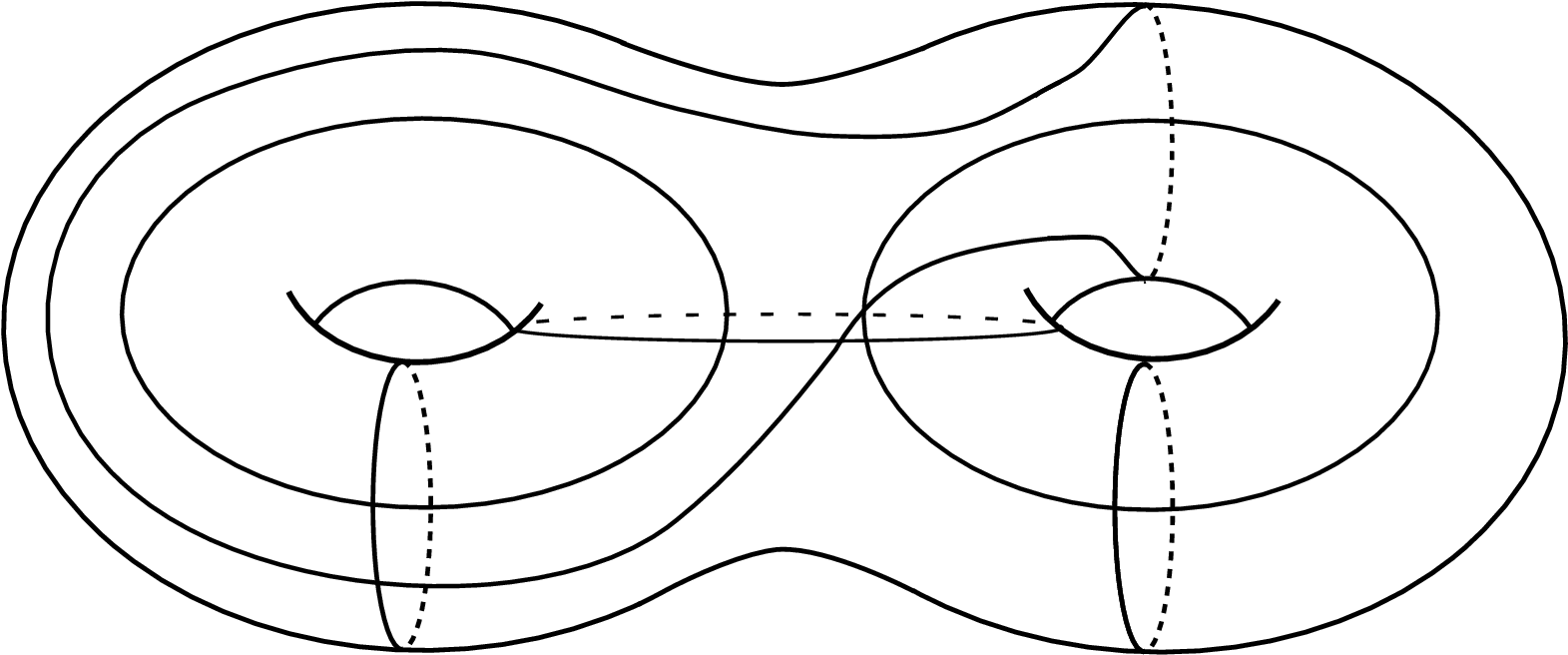}
\caption{Curves $a_1,b_1,c_1,a_2,b_2,$ and $x$ in $S_2$.}
\label{fig:genus2_curves}
\end{figure}

\noindent Using Algorithms~\ref{alg:inv} and~\ref{algo:chain_method}, in Table~\ref{tab:genus2_words} below, we provide a word $\W(F)$ (in Dehn twists) representing the conjugacy class of each periodic element $F \in \Mod(S_2)$. The explicit factorizations of all periodic mapping classes in $\Mod(S_g)$, for $1 \leq g \leq 4$, are also derived in~\cite{SH}.

\begin{table}[htbp]
\begin{center}
\begin{tabular}{|c|c|c|c|}
\hline
$|F|$ & $D_F$ & $\W(F)$ & Algorithm\\
\hline
10 & $(10,0;(1,2),(2,5),(1,10))$&$(T_{a_1}T_{b_1}T_{c_1}T_{b_2})$ & \ref{algo:chain_method} \\
10 & $(10,0;(1,2),(1,5),(3,10))$&$(T_{a_1}T_{b_1}T_{c_1}T_{b_2})^7$ & \ref{algo:chain_method} \\
10 & $(10,0;(1,2),(4,5),(7,10))$&$(T_{a_1}T_{b_1}T_{c_1}T_{b_2})^3$ & \ref{algo:chain_method} \\
10 & $(10,0;(1,2),(3,5),(9,10))$&$(T_{a_1}T_{b_1}T_{c_1}T_{b_2})^9$ & \ref{algo:chain_method} \\
8 & $(8,0;(1,2),(1,8),(3,8))$&$T_{a_1}^2T_{b_1}T_{c_1}T_{b_2}$  & \ref{algo:chain_method} \\
8 & $(8,0;(1,2),(5,8),(7,8))$&($T_{a_1}^2T_{b_1}T_{c_1}T_{b_2})^5$  & \ref{algo:chain_method} \\
6 & $(6,0;((1,2),2),(1,3),(2,3))$& $(T_{a_1}T_{b_1})(T_{a_2}T_{b_2})^{-1}$ & \ref{algo:chain_method} \\
6 & $(6,0;(2,3),(1,6),(1,6))$&$(T_{a_1}T_{b_1}T_{c_1}T_{b_2}T_{a_2})$ & \ref{algo:chain_method} \\
6 & $(6,0;(1,3),(5,6),(5,6))$&$(T_{a_1}T_{b_1}T_{c_1}T_{b_2}T_{a_2})^5$ & \ref{algo:chain_method} \\
5 & $(5,0;((1,5),2),(3,5))$&$(T_{a_1}^2T_{b_1}T_{c_1}T_{b_2}T_{a_2})$ & \ref{algo:chain_method} \\
5 & $(5,0;((2,5),2),(1,5))$&$(T_{a_1}^2T_{b_1}T_{c_1}T_{b_2}T_{a_2})^3$ & \ref{algo:chain_method} \\
5 & $(5,0;((3,5),2),(4,5))$&$(T_{a_1}^2T_{b_1}T_{c_1}T_{b_2}T_{a_2})^2$ & \ref{algo:chain_method} \\
5 & $(5,0;((4,5),2),(2,5))$&$(T_{a_1}^2T_{b_1}T_{c_1}T_{b_2}T_{a_2})^4$ & \ref{algo:chain_method} \\
4 & $(4,0;((1,2),2),(1,4),(3,4))$&$(T_{a_1}T_{b_1}T_{a_1})(T_{a_2}T_{b_2}T_{a_2})^{-1}$ & \ref{algo:chain_method} \\
3 & $(3,0;((1,3),2), ((2,3),2)$&$(T_{a_1}T_{b_1})^2(T_{a_2}T_{b_2})^{-2}$ & \ref{algo:chain_method} \\
2 & $(2,0;((1,2),6))$&$(T_{a_1}T_{b_1}T_{a_1})^2(T_{a_2}T_{b_2}T_{a_2})^{-2}$ & \ref{alg:inv} \\
2 & $(2,1;(1,2),(1,2))$&$(T_{a_2}T_{b_2}T_{x_1}T_{a_1}T_{b_1})^3$ & \ref{alg:inv} \\
\hline
\end{tabular}
\end{center}
\caption{Words (in Dehn twists) representing the conjugacy classes of periodic elements in $\Mod(S_2)$.}
\label{tab:genus2_words}
\end{table}

\subsection{Factoring free involutions} For $g \geq 3$, there exists a unique free involution $F \in \Mod(S_g)$ up to conjugacy, with $D_F = (2,(g+1)/2,1;)$. As another application of the chain method, we will now provide an explicit factorization of such an involution into Dehn twists. 

\begin{prop}
For $g \geq 2$ and $g$ even, let $F \in \Mod(S_{g+1})$ be a free involution. Then $$\W(F)=(T_{a_1}(\prod_{i=1}^{g-1}T_{b_i}T_{c_i})T_{b_g}T_{c_g})^{g+1}T_{a_{g+1}}^{-1}.$$
\end{prop}

\begin{proof}
Consider the periodic mapping class $\bar{F}\in\Mod(S_g)$ of order $2g+2$ with $D_{\bar{F}}=(2g+2,0;(g,g+1),(1,2g+2),(1,2g+2))$, where $g$ is even. Since $\W(\bar{F})=W_{g,2g+2}=T_{a_1}(\prod_{i=1}^{g-1}T_{b_i}T_{c_i})T_{b_g}T_{a_g}$ (see~\cite{MI}), we have $\W(\bar{F}^{g+1}) = W_{g,2g+2}^{g+1}$, where $D_{\bar{F}^{g+1}}=(2,\frac{g}{2};(1,2),(1,2))$. By removing the invariant disks around the fixed points of $\bar{\F}^{g+1}$ and then attaching an annulus connecting the resultant boundary components, we recover $F\in \Mod(S_{g+1})$. Viewing $\W(\bar{F}^{g+1})$ as a mapping class in $\Mod(S_{g+1})$ and renaming curves suitably (ensuring consistency with Figure~\ref{fig:genstarreln}), we apply the chain relation to obtain
$$(T_{a_1}(\prod_{i=1}^{g-1}T_{b_i}T_{c_i})T_{b_g}T_{c_g})^{2g+2}=T_{a_{g+1}}^2,$$
which, in turn, yields the desired expression for $\W(F)$.
\end{proof}

\section{Generalized star method} 
\label{sec:gen_star_method}
In this section, we first derive a generalization of Proposition~\ref{prop:star_reln} for $g \geq 2$. Using this result, we will develop a method to represent a much larger family of periodic mapping classes as words in Dehn twists, as compared with the chain method. As we will see, this family will also encompass the family of periodics described in Definition~\ref{defn:chain_real}. Let $$a_1', a_1, \ldots a_g, b_1,\ldots,b_g,c_1,\ldots,c_{g-1},d_1,d_2, \text{ and }d_3$$ be the isotopy classes of the simple closed curves in $S_g^3$, as shown in Figure~\ref{fig:genstarreln} below.
	\begin{figure}[H]
	\labellist
		\small
		\pinlabel $\textcolor{black}{a_1}$ at 135 0
		\pinlabel $\textcolor{black}{a_1'}$ at 135 190
		\pinlabel $\textcolor{black}{a_g}$ at 680 77
		\pinlabel $\textcolor{black}{b_g}$ at 705 50
		\pinlabel $\textcolor{black}{b_1}$ at 190 50
		\pinlabel $\textcolor{black}{a_2}$ at 290 0
		\pinlabel $\textcolor{black}{b_2}$ at 342 50
		\pinlabel $\textcolor{black}{c_1}$ at 217 75
		\pinlabel $\textcolor{black}{c_2}$ at 370 75
		\pinlabel $\textcolor{black}{d_2}$ at 0 95
		\pinlabel $\textcolor{black}{d_1}$ at 765 135
		\pinlabel $\textcolor{black}{d_3}$ at 765 55
		\endlabellist
	\includegraphics[width=55ex]{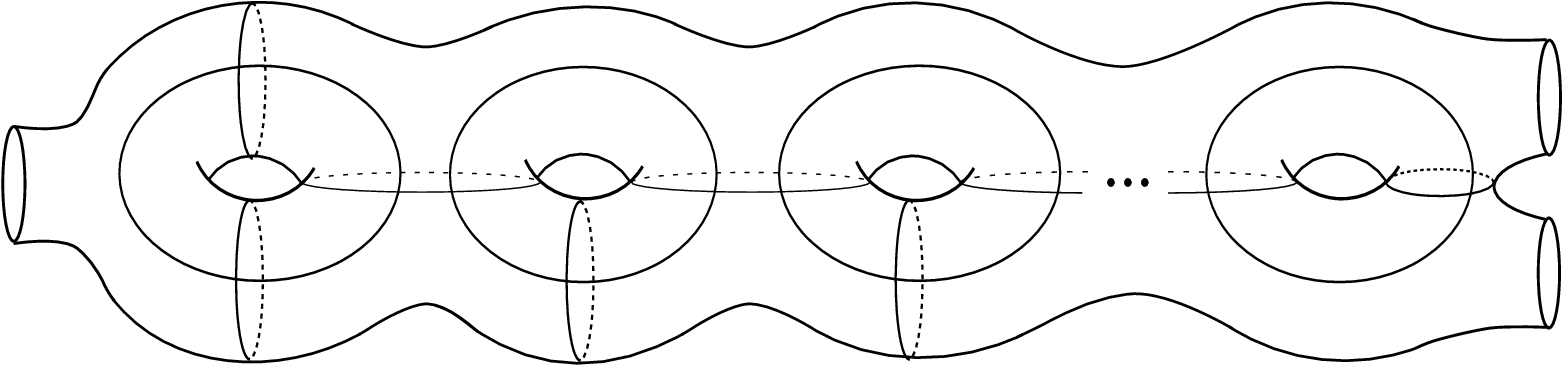}
	\caption{The curves $S_g^3$ involved in the generalized star relation.}
	\label{fig:genstarreln}
\end{figure}

\noindent Note that the curves $a_1$ and $a_1'$ are isotopic in the surface ($\approx S_g^2$) obtained by capping off the boundary curve $d_2$. Further, we consider the surface $S_g^2$ obtained by capping off the boundary curve $d_3$. We have the following generalization of the star relation, which is due to Salter~\cite{NS} (for three boundary components) and~Matsumoto~\cite{M00} (for two boundary components). 

\begin{theorem}[Generalized star relation]
\label{thm:gen_starreln}
For $g \geq 2$ and $k = 2,3$, the following relations hold in $\Mod(S_g^k)$.
\begin{enumerate}[(i)]
\item When $k = 2$, we have: 
$$(T_{a_1}T_{a_1'}\prod_{i=1}^{g-1} (T_{b_i}T_{c_i})T_{b_g})^{4g}=T_{d_2}^{(2g-1)^+}T_{d_1},$$
where $2g-1 \in \Z_{4g}^{\times}$.
\item When $k = 3$, we have: 
$$(T_{a_1}T_{a_1'}\prod_{i=1}^{g-1} (T_{b_i}T_{c_i})T_{b_g}T_{a_g})^{2g+1}=T_{d_2}^{(2g-1)^+}T_{d_1}T_{d_3},$$
where $2g-1 \in \Z_{2g+1}^{\times}$.
\end{enumerate}
\end{theorem}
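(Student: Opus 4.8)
The plan is to derive both relations from the chain relation (Proposition~\ref{prop:chain_reln}), exploiting the fact that $a_1$ and $a_1'$ cobound a pair of pants with $d_2$, and then to pin down the power of $T_{d_2}$ by a local rotation-number computation.

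First I would cap off the boundary curve $d_2$ with a once-punctured disk, obtaining a capping homomorphism $\kappa : \Mod(S_g^k) \to \Mod(\widehat{S})$ whose kernel is the cyclic group $\langle T_{d_2}\rangle$. Since $a_1$ and $a_1'$ are isotopic in $\widehat{S}$ (as noted in the discussion preceding the statement), the word $W$ inside the parentheses descends under $\kappa$ to the chain word $\widehat W = T_{a_1}^2 \prod_{i=1}^{g-1}(T_{b_i}T_{c_i}) T_{b_g}$ when $k=2$, and to $\widehat W = T_{a_1}^2 \prod_{i=1}^{g-1}(T_{b_i}T_{c_i}) T_{b_g} T_{a_g}$ when $k=3$. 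In either case $\{a_1, b_1, c_1, \ldots\}$ is a chain filling $\widehat S$, of length $2g$ (even) when $k=2$ and of length $2g+1$ (odd) when $k=3$. Applying the equivalent form of the chain relation therefore yields $\widehat W^{\,4g} = T_{\widehat d_1}$ when $k=2$, and $\widehat W^{\,2g+1} = T_{\widehat d_1} T_{\widehat d_3}$ when $k=3$, where $\widehat d_1, \widehat d_3$ are the boundary curves of the regular neighborhood of the chain.

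Next, because $\ker \kappa = \langle T_{d_2}\rangle$ and the curves $d_1, d_3$ are disjoint from the capping disk, lifting these identities back to $\Mod(S_g^k)$ shows that
\[
W^{4g} = T_{d_2}^{\,m} T_{d_1} \qquad \text{and} \qquad W^{2g+1} = T_{d_2}^{\,m} T_{d_1} T_{d_3}
\]
for some integer $m$, where the exponents of $T_{d_1}$ and $T_{d_3}$ are forced to be $1$ by the chain relation above. Thus the entire content of the theorem is reduced to the single claim that $m \equiv (2g-1)^{-1} \pmod N$, where $N = 4g$ (resp. $N = 2g+1$). To identify $m$, I would cap off $d_1$ (and $d_3$) as well, so that $W$ descends to a mapping class $\overline W$ on the once-holed surface with boundary $d_2$ satisfying $\overline W^{\,N} = T_{d_2}^{\,m}$; equivalently, $\overline W$ is a root of $T_{d_2}^{\,m}$ and $m/N$ is its fractional Dehn twist coefficient at $d_2$. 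Capping $d_2$ too realizes $\overline W$ as the order-$N$ irreducible Type~1 rotation of the closed surface associated with the filling chain, so that $m/N$ equals the local rotation number of this rotation at the cone point arising from $d_2$. Computing this number from the explicit polygonal realization of Theorem~\ref{res:1} (equivalently, reading off the corresponding entry of its data set) shows that the cone point at $d_2$ carries local rotation $2g-1$, whence $m \equiv (2g-1)^{-1} = (2g-1)^+ \pmod N$.

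The main obstacle is precisely this last step: verifying that the twist about $d_2$ occurs to the power $(2g-1)^+$ rather than to some other inverse class. The difficulty is that capping $d_2$ destroys exactly the information one needs, so the exponent cannot be recovered from the chain relation alone and must be extracted from the finer data of how the chain-rotation turns in a collar of $d_2$. I expect the cleanest route is the local rotation-number identification above, although one could alternatively extract $m$ by applying the three-chain relation of Lemma~\ref{lem:S12} to the subchain $\{a_1, b_1, a_1'\}$, whose regular neighborhood contains $d_2$, and bookkeeping the resulting boundary twists against those produced by the remainder of the chain.
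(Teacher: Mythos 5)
Your strategy (cap off $d_2$, invoke the chain relation on the capped surface, then recover the exponent of $T_{d_2}$ from a rotation-number computation) is genuinely different from the paper's proof, which verifies the relation directly by the Alexander method, checking the action of both sides on an explicit filling system of curves and arcs in $S_g^2$ and $S_g^3$. Unfortunately, your reduction breaks at the very first step. You cannot simultaneously have $\ker\kappa=\langle T_{d_2}\rangle$ and $a_1\simeq a_1'$ in the capped surface: the kernel of the capping homomorphism is the cyclic group generated by the boundary twist only when you cap $d_2$ with a \emph{once-marked} disk, but in that once-marked capped surface $a_1$ and $a_1'$ cobound a once-punctured annulus and are therefore \emph{not} isotopic, so $W$ does not descend to the chain word $T_{a_1}^2\prod(T_{b_i}T_{c_i})T_{b_g}(T_{a_g})$. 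The isotopy $a_1\simeq a_1'$ invoked in the paper refers to capping with an honest (unmarked) disk, and for that capping the kernel is not $\langle T_{d_2}\rangle$ but the full disk-pushing subgroup, an extension of $\pi_1$ of the capped surface by $\langle T_{d_2}\rangle$ (isomorphic to $\pi_1$ of the unit tangent bundle). Consequently the conclusion that $W^{4g}T_{d_1}^{-1}$ (resp.\ $W^{2g+1}T_{d_3}^{-1}T_{d_1}^{-1}$) lies in $\langle T_{d_2}\rangle$ does not follow from the chain relation on the capped surface; a priori the discrepancy could be any element of the disk-pushing subgroup, and ruling that out requires an additional argument (e.g.\ showing the element commutes with enough of $\Mod(S_g^k)$, or a direct computation of the type the paper performs).

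Even granting the reduction to a single exponent $m$, the final identification $m=(2g-1)^+$ needs more care than ``read off the data set.'' The data set $(4g,0;(1,2),(1,4g),(2g-1,4g))$ has two cone points of order $4g$, corresponding to the two boundary components $d_1$ and $d_2$, and the theorem asserts specifically that the $(2g-1,4g)$ cone point sits at $d_2$ (the boundary adjacent to $a_1,a_1'$) while the $(1,4g)$ cone point sits at $d_1$. Deciding which cone point lands at which boundary component is exactly the local information that capping destroys, so it must be extracted either from the polygonal realization of Theorem~\ref{res:1} by tracking where the chain curves lie, or from an Alexander-method computation in a collar of $d_2$ --- at which point you are essentially redoing the paper's proof. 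Your fractional-Dehn-twist-coefficient framing is a legitimate way to organize this, but as written the two load-bearing steps (the kernel identification and the cone-point-to-boundary matching) are both asserted rather than proved.
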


%

\noindent Clearly, Theorem~\ref{thm:gen_starreln} is a generalization of Proposition~\ref{prop:star_reln}. Moreover, by capping the boundary curve $d_2$, we can also recover  Proposition~\ref{prop:chain_reln}. Following the notation from Section~\ref{sec:chain_method},  we will now introduce a family of periodic mapping classes for which we will develop a method (of deriving $\W(F)$) using Theorem~\ref{thm:gen_starreln}.

\begin{defn}
\label{defn:star_real}
Let $F \in \Mod(S_g)$ be realizable as a linear $s$-tuple $(F_1,\ldots,F_s)$ of degree $n$ and genus $g$ as in Definition~\ref{defn:compk_tuple}. Then $F$ is said to be \textit{star-realizable} if 
$F$ admits a realization as a linear $s$-tuple $(F_1,\ldots,F_s)$ of genus $g$ such that the following conditions hold. 
\begin{enumerate}[(i)]
\item For $1 \leq i \leq s$, there exists an irreducible Type 1 mapping class $\tilde{F}_i \in \Mod(S_{g_i})$, a filling chain $\C(\tilde{F}_i)$ in $S_{g_i}$, and an $m_i \geq 1$ such that $F_i$ is conjugate to $(W_{\C(\tilde{F}_i)})^{m_i}$. Then:
\begin{enumerate}
\item For each $i$, $D_{\tilde{F}_i}$ has one of the following forms on $S_{g_i}$
\begin{enumerate}[1.]
\item $(2{g_i}+2,0;({g_i},{g_i}+1),(1,2{g_i}+2),(1,2{g_i}+2))$,
\item $(4{g_i},0;(1,2),(1,4{g_i}),(2{g_i}-1,4{g_i}))$, 
\item $(4{g_i}+2,0;(1,2),({g_i},2{g_i}+1),(1,4{g_i}+2))$
\item $(2{g_i}+1,0;(2{g_i}-1,2{g_i}+1),(1,2{g_i}+1),(1,2{g_i}+1)) $
\end{enumerate}
\item For $1 \leq i \leq s-1$, $k_i =1$, and for each pair $(F_i,F_{i+1})$, the $1$-compatibility is across a pair of fixed points represented by pairs of the form $(c_i,n)$ (in $D_{F_i}$) and $(n-c_i,n)$ (in $D_{F_{i+1}}$), where $(c_i,n) \in \{(1, |\tilde{F}_i|)_{m_i,\tilde{F}_i}, (|\tilde{F}_i|/2-1, |\tilde{F}_i|)_{m_i,\tilde{F}_i},(|\tilde{F}_i|-2, |\tilde{F}_i|)_{m_i,\tilde{F}_i}\}$ and $(n-c_i,n) \in \{(1, |\tilde{F}_{i+1}|)_{{m_{i+1},\tilde{F}_{i+1}}}, (|\tilde{F}_{i+1}|/2-1, |\tilde{F}_{i+1}|)_{m_{i+1},\tilde{F}_{i+1}},(|\tilde{F}_{i+1}|-2, |\tilde{F}_{i+1}|)_{m_{i+1},\tilde{F}_{i+1}}\}$.
\end{enumerate}
\end{enumerate}
\end{defn}

\noindent We will require the following lemma, which was first proven in~\cite{MI}. (A variant of the same lemma was later shown in~\cite{TN}.) The lemma can also be recovered as a consequence of Theorem~\ref{thm:gen_starreln}.

\begin{lemma}
\label{lem:per_comps}
Consider the periodic mapping classes. 
$$W_{g,j} := \begin{cases} 
T_{a_1}\prod_{i=1}^{g-1} (T_{b_i}T_{c_i})T_{b_g}, & \text{if } j = 4g+2, \\
T_{a_1}T_{a_1'}\prod_{i=1}^{g-1} (T_{b_i}T_{c_i})T_{b_g}, & \text{if } j = 4g,\\
T_{a_1}\prod_{i=1}^{g-1} (T_{b_i}T_{c_i})T_{b_g}T_{a_g}, & \text{if } j = 2g+2,  \text{ and}\\
T_{a_1}T_{a_1'}\prod_{i=1}^{g-1} (T_{b_i}T_{c_i})T_{b_g}T_{a_g}, & \text{if } j = 2g+1.
\end{cases}$$
Then 
$$D_{W_{g,j}} = \begin{cases} 
(4g+2,0;(1,2),(g,2g+1),(1,4g+2)), & \text{if } j = 4g+2, \\
(4g,0;(1,2),(1,4g),(2g-1,4g)), & \text{if } j = 4g,\\
(2g+2,0;(g,g+1),(1,2g+2),(1,2g+2)), & \text{if } j = 2g+2,  \text{ and}\\
(2g+1,0;(1,2g+1),(1,2g+1),(2g-1,2g+1)), & \text{if } j = 2g+1.
\end{cases}$$
\end{lemma}

\noindent We will fix the notation in Lemma~\ref{lem:per_comps} for the remainder of this section. Let $d_i$ denote the boundary curve of $\Sigma_i$ involved in the $1$-compatibility of $F_i$ with $F_{i+1}$, and let $\gamma_i$ represent the isotopy class of $d_i$ in $S_g$ after the compatibility. Let $c^+$ denote the unique integer in $[0,n]$ representing the multiplicative inverse of $c\in \Z_n^{\times}$. We further fix the following notation.
$$ \begin{array}{lcll}
\mu_{1,i}& = & \displaystyle \frac{m_i}{\gcd(m_i,|\tilde{F}_i|)}, & \text{ if } (c_i,n)=(1,|\tilde{F}_i|)_{m_i,\tilde{F}_i}, \\ \\
\mu_{2,i}& = & \displaystyle \frac{m_i}{\gcd(m_i,|\tilde{F}_i|)}{(|\tilde{F}_i|/2-1)^+}, & \text{ if } (c_i,n)=(|\tilde{F}_i|/2-1,|\tilde{F}_i|)_{m_i,\tilde{F}_i}, \text{ and} \\ \\
\mu_{3,i} & = & \displaystyle \frac{m_i}{\gcd(m_i,|\tilde{F}_i|)}{(|\tilde{F}_i|-2)^+}, &  \text{ if } (c_i,n)=(|\tilde{F}_i|-2,|\tilde{F}_i|)_{m_i,\tilde{F}_i}.
\end{array}$$

\noindent By our notation, for each $i$, there exists a unique $z_i \in\{1,2,3\}$ such that  we have $c_i=\mu_{z_i,i}^{-1}\pmod n$, and since $c_i+c_{i+1}\equiv 0\pmod n$, we have $ \mu_{z_i,i}+\mu_{z_{i+1},i+1} \equiv 0\pmod n$. With this notation in place, we have the following lemma, which provides a word $\W(F)$ in Dehn twists that represents the conjugacy class of a star-realizable linear $s$-tuple $F$. 

\begin{lemma}\label{lem:star}
Let $F \in \Mod(S_g)$ be a star-realizable linear $s$-tuple of degree $n$ as in Definition~\ref{defn:star_real}. For all $i$, let $\W(F_i)=(W_{g_i,|\tilde{F}_i |})^{m_i}$. Then:
$$\W(F) = \left( \prod_{i=1}^s \W(F_i) \right)\prod_{i=1}^{s-1}  (T_{\gamma_i})^{-\eta_i},$$ where $\eta_i = \displaystyle \frac{\mu_{z_i,i}+\mu_{z_{i+1},i+1}}{n}$, is conjugate to $F$. 
\end{lemma}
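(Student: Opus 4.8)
The plan is to adapt the proof of Lemma~\ref{lem:chain_real}, replacing the chain relation by the generalized star relation of Theorem~\ref{thm:gen_starreln} and carefully bookkeeping the boundary-twist exponents that it produces. The first observation is that each word $W_{|\tilde{F}_i|}$ is the left-hand side of one of the four relations listed with $D_{W_i}$: when $|\tilde{F}_i|\in\{4g_i+2,\,2g_i+2\}$ the word is a chain word and $(W_{|\tilde{F}_i|})^{|\tilde{F}_i|}$ is governed by Proposition~\ref{prop:chain_reln} (so all boundary twists have exponent $1$), while when $|\tilde{F}_i|\in\{4g_i,\,2g_i+1\}$ it is the $k=2$ (resp. $k=3$) star word and $(W_{|\tilde{F}_i|})^{|\tilde{F}_i|}$ is given by Theorem~\ref{thm:gen_starreln}. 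In every case the twists occur along the boundary curves of the underlying $S_{g_i}^{k}$, and after the $1$-compatibilities of Definition~\ref{defn:star_real} these are exactly the curves $\gamma_{i-1}$ and $\gamma_i$ (interior curves of $S_g$), together with curves that get capped off and hence become trivial.

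Next I would compute the exponent of the twist about each $\gamma_i$. Since $\W(F_i)=(W_{|\tilde{F}_i|})^{m_i}$ with $|F_i|=n$, we have $m_i n=|\tilde{F}_i|\lambda_i$ where $\lambda_i=m_i/\gcd(m_i,|\tilde{F}_i|)$, so that $\W(F_i)^n=\big((W_{|\tilde{F}_i|})^{|\tilde{F}_i|}\big)^{\lambda_i}$. Reading the exponents off Theorem~\ref{thm:gen_starreln}, the three admissible compatibility fixed points $(1,|\tilde{F}_i|)$, $(|\tilde{F}_i|/2-1,|\tilde{F}_i|)$, and $(|\tilde{F}_i|-2,|\tilde{F}_i|)$ sit on boundary curves carrying star exponents $1$, $(|\tilde{F}_i|/2-1)^+$, and $(|\tilde{F}_i|-2)^+$ respectively; scaling by $\lambda_i$ turns these into precisely $\mu_{1,i}$, $\mu_{2,i}$, and $\mu_{3,i}$. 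Consequently the exponent of the twist about $\gamma_i$ contributed by $\W(F_i)^n$ is $\mu_{z_i,i}$, and the one contributed by $\W(F_{i+1})^n$ is $\mu_{z_{i+1},i+1}$. This matching is the step I expect to be the main obstacle: one must check that the multiplicative-inverse exponents produced by the relation are transported to the $c_i$-data of $D_{F_i}$ correctly under the passage $\tilde{F}_i\mapsto\tilde{F}_i^{\,m_i}$, and confirm that the compatibility constraint $c_i+c_{i+1}\equiv0\pmod n$ indeed forces $\mu_{z_i,i}+\mu_{z_{i+1},i+1}\equiv0\pmod n$, so that $\eta_i$ is an integer.

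With the exponents in place, the conjugacy statement closes along the lines of Lemma~\ref{lem:chain_real}. On the interior of each $\Sigma_i(F)$ the word $\W(F_i)$ realizes the rotation $F_i$ by Theorem~\ref{res:1}, and the correction factors $T_{\gamma_j}^{-\eta_j}$ are supported in collars of the gluing curves, so they leave these restrictions unchanged; hence $\W(F)$ reassembles the $F_i$ across exactly the $1$-compatibilities prescribed in Definition~\ref{defn:star_real}, and by Construction~\ref{cons:k-comp} it realizes $F$ as a linear tuple. For periodicity, note that each $\W(F_i)$ fixes every $\gamma_j$ and the various $\W(F_i)$ have essentially disjoint supports, so all factors of $\W(F)$ commute and
$$\W(F)^n=\Big(\prod_{i=1}^{s}\W(F_i)^n\Big)\prod_{i=1}^{s-1}T_{\gamma_i}^{-n\eta_i}.$$
Every capped boundary contributes a trivial twist, so the surviving exponent about each $\gamma_i$ is $\mu_{z_i,i}+\mu_{z_{i+1},i+1}-n\eta_i=0$ by the definition of $\eta_i$; thus $\W(F)^n=1$. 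Combined with the restriction data and Proposition~\ref{prop:ds-action}, this identifies the conjugacy class of $\W(F)$ with that of $F$. The assembly and the order computation are then routine once the exponent bookkeeping of the preceding paragraph is settled.
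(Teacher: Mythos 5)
Your proposal follows essentially the same route as the paper's proof: distribute the $n$-th power over the commuting factors $\W(F_i)$, rewrite $\W(F_i)^n$ as $\bigl((W_{|\tilde{F}_i|})^{|\tilde{F}_i|}\bigr)^{m_i/\gcd(m_i,|\tilde{F}_i|)}$, apply Theorem~\ref{thm:gen_starreln} (or the chain relation in the degenerate cases) to identify the boundary-twist exponents with the $\mu_{z_i,i}$, and cancel against $T_{\gamma_i}^{-\eta_i}$ using $\mu_{z_i,i}+\mu_{z_{i+1},i+1}\equiv 0\pmod n$ to get $\W(F)^n=1$. Your additional remarks on transporting the exponents under $\tilde{F}_i\mapsto\tilde{F}_i^{m_i}$ and on the restriction of $\W(F)$ to the pieces $\Sigma_i(F)$ only flesh out what the paper compresses into ``from which the assertion follows.''
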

\begin{proof}
Since $\W(F_i)$ commutes with $\W(F_j)$ for $ 1\leq i,j\leq s $, we have $$(\prod_{i=1}^{s} \W(F_i))^n=\prod_{i=1}^{s} (\W(F_i))^n.$$ Since $\W(F_i)=(W_{g_i,|\tilde{F}_i |})^{m_i}$, the fact that $$(c_i,n)\in \{(1, |\tilde{F}_i|)_{m_i,\tilde{F}_i}, (|\tilde{F}_i|/2-1, |\tilde{F}_i|)_{m_i,\tilde{F}_i},(2|\tilde{F}_i|-1, |\tilde{F}_i|)_{m_i,\tilde{F}_i}\}$$ implies that
$$\prod_{i=1}^{s} (\W(F_i))^n=\prod_{i=1}^{s} ((W_{g_i,|\tilde{F}_i |})^{|\tilde{F}_i |})^\frac{m_i}{\gcd(m_i,\tilde{F}_i)}.$$
By Theorem~\ref{thm:gen_starreln}, depending on $|\tilde{F}_i|$ and $D_{\tilde{F}_i}$, $(W_{g_i,|\tilde{F}_i |})^{|\tilde{F}_i|}$ one of:
\[T_{d_1},\, T_{d_1}T_{d_3}, \,T_{d_1}T_{d_2}^{(|\tilde{F}_i|/2-1)^+}, \text{ or } T_{d_1}T_{d_2}^{(|\tilde{F}_i|-2)^+}T_{d_3}.\]
By the definition of $\mu_{z_i,i}$, we have
\[(\prod_{i=1}^{s} \W(F_i))^n=\prod_{i=1}^{s} (\W(F_i))^n=\prod_{i=1}^{s-1} \left(T_{\gamma_i}^{\mu_{z_i,i}+\mu_{z_{i+1},i+1}}\right) \tag{*}\]
As each $T_{\gamma_i}$ commutes with every other Dehn twist appearing in $(*)$ and $\mu_{z_i,i}+\mu_{z_{i+1},i+1}\equiv 0 \pmod n$, we get
$$\W(F)^n =\left(\left( \prod_{i=1}^s \W(F_i) \right)\prod_{i=1}^{s-1}  (T_{\gamma_i})^{-\eta_i}\right)^n= \left( \prod_{i=1}^s \W(F_i) \right)^n\prod_{i=1}^{s-1}  (T_{\gamma_i})^{-\eta_i n}=1,$$
from which the assertion follows.
\end{proof}

\noindent We will describe a algorithm to write a star-realizable linear $s$-tuple $F\in \Mod(S_g)$ as a word in Dehn twists (up to conjugacy). 

\begin{algo}
\label{algo:star_method}
Let $F \in \Mod(S_g)$ be a star-realizable linear $s$-tuple of degree $n$ and genus $g$.
\begin{enumerate}[\textit{Step} 1.]
\item Write $F = (F_1,\ldots,F_s)$ as in Definition~\ref{defn:star_real}. 
\item For each $i$, we set $\W({F_i})=W_{g_i,|\tilde{F}_i|}^{m_i}$, after appropriately relabeling the curves in $\Sigma_i$ in order to ensure consistency with the (assumed) labeling in Theorem~\ref{thm:gen_starreln}.
\item Set $$\W(F) = \left( \prod_{i=1}^s \W(F_i) \right)\prod_{i=1}^{s-1}  (T_{\gamma_i})^{-\eta_i}.$$
\item By Lemma~\ref{lem:star}, $\W(F)$ is the desired representation of $F$ as a word in Dehn twists, up to conjugacy.
\end{enumerate}
\end{algo}

\noindent The method described in Algorithm~\ref{algo:star_method} can be generalized to certain types of $(F,\T)$-tuples. 

\begin{defn}
\label{defn:starreal_FTtuple}
A compatible $(F,\T)$-tuple as in Definition~\ref{defn:comp_FTtuple} is said to be \textit{star-realizable} if the following conditions hold.
\begin{enumerate}[(i)]
\item $v=w=0$.
\item $F$ is star-realizable.
\item For $1\leq q\leq u$, $k_q =1$
\item For $1\leq q\leq u$, suppose the self $1$-compatibility in $\F_{i_{q},j_{q}}$ is along fix points represented by $(c_{i_q},n)$ (in $D_{F_{i_q}}$) and $(n-c_{i_q},n)$ (in $D_{F_{j_q}}$), then 
\begin{gather*}
(c_{i_q},n) \in \{(1, |\tilde{F}_{i_q}|)_{m_{i_q},\tilde{F}_{i_q}}, (|\tilde{F}_{i_q}|/2-1, |\tilde{F}_{i_q}|)_{m_{i_q},\tilde{F}_{i_q}},(|\tilde{F}_{i_q}|-2, |\tilde{F}_{i_q}|)_{m_{i_q},\tilde{F}_{i_q}}\} \\ \text{ and } \\(n-c_{i_q},n) \in \{(1, |\tilde{F}_{j_q}|)_{{m_{j_q},\tilde{F}_{j_q}}}, (|\tilde{F}_{j_q}|/2-1, |\tilde{F}_{j_q}|)_{m_{j_q},\tilde{F}_{j_q}},(|\tilde{F}_{j_q}|-2, |\tilde{F}_{j_q}|)_{m_{j_q},\tilde{F}_{j_q}}\}.
\end{gather*}
\end{enumerate}
\end{defn}

\begin{defn}
\label{defn:star_realgen}
A periodic mapping class $G \in \Mod(S_g)$ is said to be \textit{star-realizable} if there exists a star-realizable compatible $(F,\T)$-tuple $F_\T \in \Mod(S_g)$ and a nonzero integer $m$ such that $G = F_\T^m$.
\end{defn}

\noindent We will now extend Algorithm~\ref{algo:star_method} to this broader class of periodic mapping classes. While doing so, we will retain the notation for $\gamma_i$ and $\eta_i$, for $ 1 \leq i \leq s$ (for $F$) from Algorithm~\ref{algo:star_method}. To further simplify notation, we will denote the additional curves involved in the additional $1$-self compatibilities (of $\F_{\T}$) by $\{\gamma_j\}_{j=s}^{u+s-1}$ and also extend the earlier definition of $\eta_j$ to $s\leq j\leq u+s-1$.

\begin{algo}
\label{algo:star_methodFT}
Let $G\in \Mod(S_g)$ be a star-realizable periodic mapping class.
\begin{enumerate}[\textit{Step} 1.]
\item Write $G = F_{\T}^m$, where $F_{\T}$ is a compatible $(F,\T)$-tuple as in Definition~\ref{defn:comp_FTtuple}. 
\item By Algorithm~\ref{algo:star_method}, we obtain 
$$\W(F) = \left( \prod_{i=1}^s \W(F_i) \right)\prod_{i=1}^{s-1}  (T_{\gamma_i})^{-\eta_i}.$$
\item We set $$\W(F_{\T}) = \W(F) \prod_{i=s}^{u+s-1} (T_{\gamma_i})^{-\eta_i}.$$
\item By the same arguments from Lemma~\ref{lem:star}, $(\W(F_\T))^m$ is the desired representation of $G$ as a word in Dehn twists (after an appropriate relabeling of curves to ensure consistency with Theorem~\ref{thm:gen_starreln}).
\end{enumerate}
\end{algo}

\noindent We will now give three examples to demonstrate the application of Algorithms~\ref{algo:star_method} and \ref{algo:star_methodFT}.

\begin{exmp}
Consider an $F\in\Mod(S_7)$ with $$D_{F}=(6,0;((1,2),2),(1,3),(2,3),(1,6),(5,6)).$$ Then $F$ is a star-realizable linear $3$-tuple $(F_1,F_2,F_3)$, where
$$\begin{array}{rcll}
 D_{F_1} & = & (6,0;((1,2),2),(1,6),(5,6)) & \text{ with } g( D_{F_1}) =  3 \\ 
 D_{F_2} & = & (6,0;(1,3),(5,6),(5,6)) & \text{ with } g( D_{F_2}) =  2, \text{ and } \\ 
 D_{F_3} & = & (6,0;(2,3),(1,6),(1,6)) & \text{ with } g( D_{F_3}) =  2.
\end{array}$$
	\begin{figure}[H]
	\includegraphics[width=55ex]{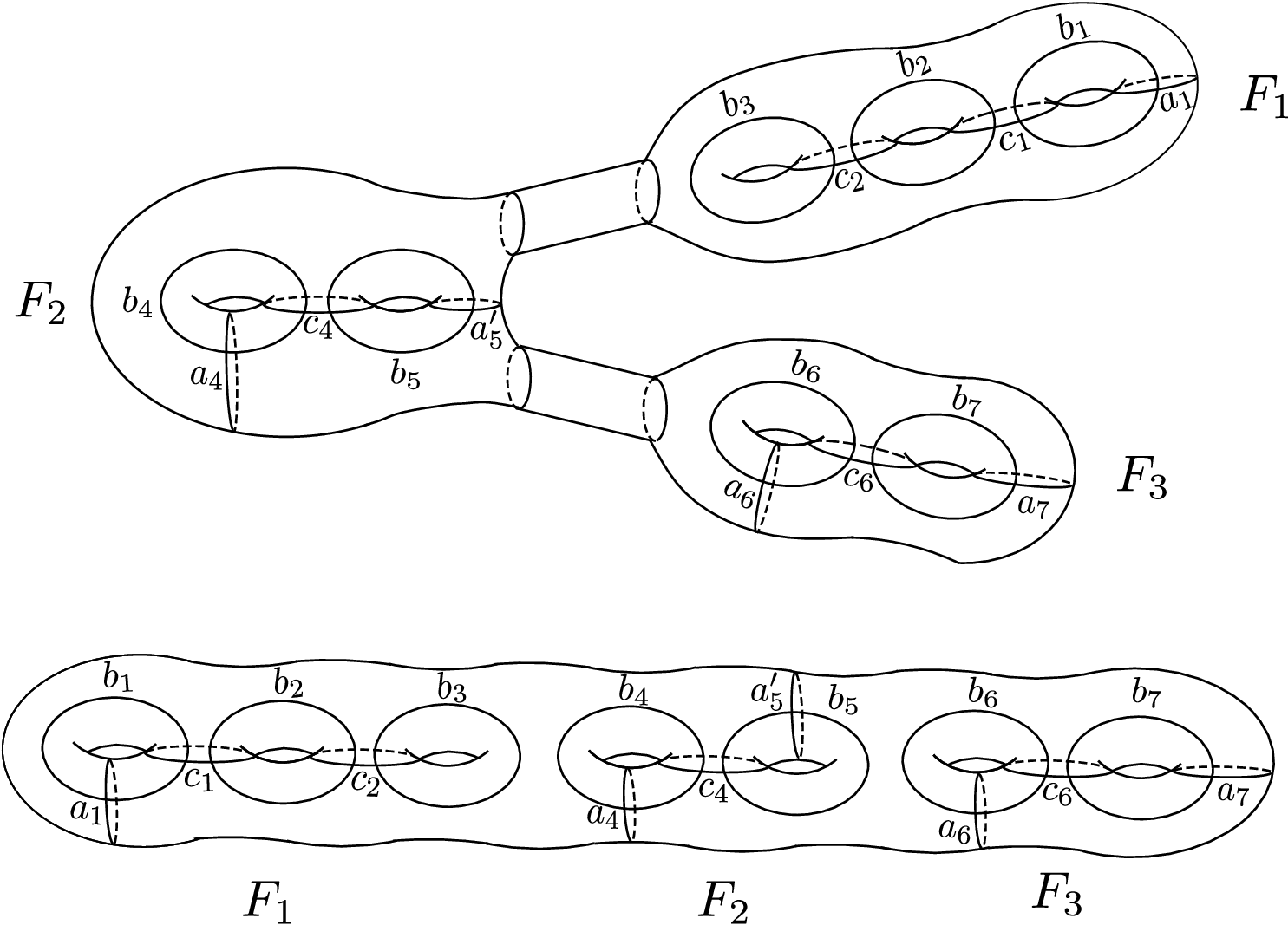}
	\caption{The curves involved in the factorization of $F$.}
	\label{fig:z6_on_s7}
\end{figure}
 Note that the $1$-compatibility of $F_1$ with $F_2$ is along fixed points represented by the pairs $(1,6)$ (in $D_{F_1}$) and $(5,6)$  (in $D_{F_2}$), while the compatibility of $F_2$ with $F_3$ is along the pairs $(5,6)$ (in $D_{F_2}$) and $(1,6)$  (in $D_{F_3}$) (see Figure~\ref{fig:z6_on_s7}). By Algorithm~\ref{algo:star_method}, we have
$\W(F_1)=W_{3,12}^2, \W(F_2)=W_{2,6}^5, \W(F_3)=W_{2,6}$,  and  $\eta_1=1=\eta_2.$ Therefore, we have
$$\W(F)=(T_{a_1}^2T_{b_1}T_{c_1}T_{b_2}T_{c_2}T_{b_3})^2(T_{a_4}T_{b_4}T_{c_4}T_{b_5}T_{a_5'})^5(T_{a_6}T_{b_6}T_{c_6}T_{b_7}T_{a_7})(T_{\gamma_1}T_{\gamma_2})^{-1}.$$
\end{exmp}

\begin{exmp}
Consider a periodic mapping class $G\in\Mod(S_g)$ with $D_{G}=(2g-2,1;(1,2),(1,2))$. Then $G$ is a star-realizable mapping class $F_\T$, where $\T=(1,1,0,0)$ and $F = (F_1)$ with $D_{F_1}=(2g-2,0;(1,2),(1,2),(1,2g-2),(2g-3,2g-2))$ and $g_1 = g(D_{F_1}) = g-1$. Note that the self $1$-compatibility of $F$ is along a pair of fixed points of the $\langle \F \rangle$-action represented by the pairs $(1,2g-2)$ and $(2g-3,2g-2)$ (in $D_{F_1}$). By Algorithm~\ref{algo:star_methodFT}, we have $\W(F_1)=W_{g_1,4g_1}^2,$ and so $$\W(G)=(T_{a_2}\prod_{i=1}^{g-1}(T_{c_i}T_{b_{i+1}}))^{2}T_{a_1}^{-1},$$ where we have relabeled $\gamma_1$ as $a_1$, and $a_1'$ as $c_1$, so as to ensure consistency with Theorem~\ref{thm:gen_starreln}.
\end{exmp}
	
\begin{exmp}
\label{exmp:order7_permadd}
Consider a periodic mapping class $F\in \Mod(S_{10})$ with $D_F=(7,1;(1,7),(3,7),(3,7))$.  Then $F$ is star-realizable linear $2$-tuple $(F_1,F_2)$, where $D_{F_1}= (7,1;(3,7),(4,7))$ with $g_1 = g(D_{F_1}) = 7$ and $D_{F_2}=(7,0;(1,7),(3,7),(3,7))$ with $g_2 = g(D_{F_2}) = 3$. Note that $F_1$ is a rotational mapping class, and the $1$-compatibility of $F_1$ with $F_2$ is along fixed points represented by the pairs $(4,7)$ (in $D_{F_1}$) and $(3,7)$  (in $D_{F_2}$)(see Figure~\ref{fig:z7_on_s10}). Following Algorithm~\ref{algo:star_method}, we have 
$\W(F_1)=W_{7,28}^{8}, \,\W(F_2)=W_{3,7}^5, \text{ and }\eta_1=1.$ Consequently, 
$$\W(F)=(T_{a_1}^2\prod_{i=1}^{6}(T_{b_i}T_{c_i})T_{b_7})^8(T_{a_{10}}^2 T_{b_{10}}T_{c_9}T_{b_9}T_{c_8}T_{b_8}T_{a_8})^5T_{\gamma_1}^{-1},$$
where $\gamma_1$ is the separating curve involved in the $1$-compatibility $F_1$ with $F_2$.
\end{exmp}

\begin{figure}[H]
	\includegraphics[width=65ex]{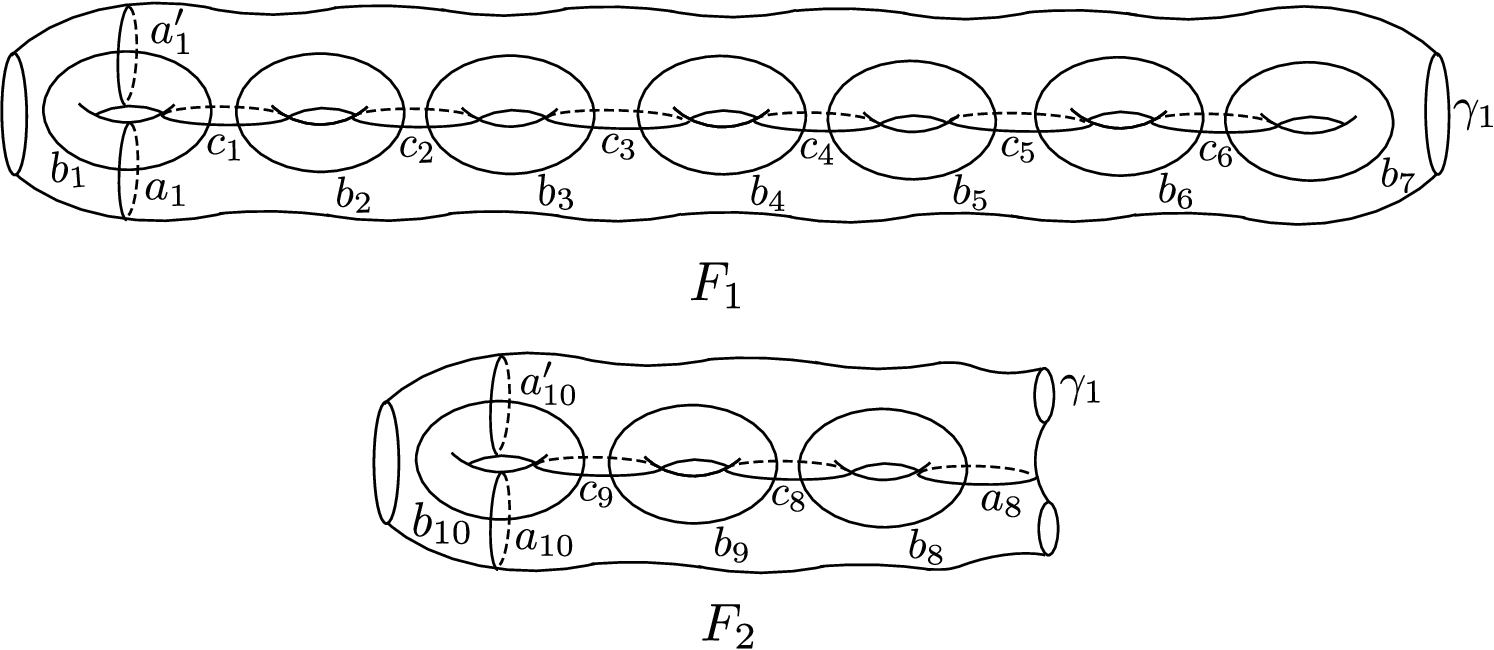}
	\caption{The curves involved in the factorization of $F$.}
	\label{fig:z7_on_s10}
\end{figure}

\noindent In general, the addition of a $g'$-permutation component to a periodic mapping class $F_2 \in \Mod(S_g)$ of order $n$ (as in Construction~\ref{cons:perm_add}) can also be viewed as $1$-compatibility of $F_2$ with the rotational mapping class $F_1 \in \Mod(S_{ng'})$ with $D_{F_1} = (n,g'; (1,n), (n-1,n))$. Note that this compatibility is along fixed points represented by $(n-1,n)$ (in $D_{F_1}$) and $(1,n)$ (in $D_{F_2}$). Moreover, it is not hard to see that $\W(F_1) = W_{ng',4ng'}^{4g'}$, when $n$ is odd. Thus, the ideas in Example~\ref{exmp:order7_permadd} easily generalize to yield the following. 

\begin{prop}
	Let $F_2 \in \Mod(S_g)$ be a periodic star-realizable mapping class of odd order with $D_{F_2}=(n,g_0;(c_1,n),(c_2,n_2),\ldots,(c_r,n_r))$.  Let $F$ be obtained through the addition of a $1$-permutation component to $F_2$. Then viewing $F$ as a $1$-compatible pair $(F_1,F_2)$ along a separating curve $\gamma$ so that $S_g = S_{g(D_{F_2})} \#_{\gamma} S_n$, where $D_{F_1} = (n,1; (c_1,n), (n-c_1,n))$, we have 
	$$\W(F)=\W(F_2)W_{n,4n}^{4c^{+}_1}T_{\gamma}^{-\eta},$$ where $\eta$ is defined along the same lines as in Lemma~\ref{lem:star}.
\end{prop}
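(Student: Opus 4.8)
The plan is to reduce the proposition to a direct application of Lemma~\ref{lem:star} in the special case $s=2$, by showing that the permutation addition of a $1$-permutation component is genuinely a $1$-compatibility with the stated rotational map $F_1$. First I would verify that $F_1 \in \Mod(S_{n})$ with $D_{F_1} = (n,1;(c,n),(n-c,n))$ is itself a rotational (hence star-realizable) mapping class: its orbifold signature is $(1; n, n)$, so by Proposition~\ref{prop:rotl_actn}(i) (with $k=1$, $g_0=1$) it is a non-free rotation of $S_n$, and its two fixed points carry rotation data $(c,n)$ and $(n-c,n)$ whose angles sum to $0 \pmod{2\pi}$. This is precisely the fixed-point pair needed for a $1$-compatibility with $F_2$ along the fixed point labeled $(1,n)$ in $D_{F_2}$; matching $(n-c,n)$ in $D_{F_1}$ against $(1,n)$ in $D_{F_2}$ (or, more precisely, choosing $c$ so the compatibility is legal) shows that the resulting genus is $g + n \cdot 1 = g + n$, consistent with Construction~\ref{cons:perm_add}.

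Next I would compute $\W(F_1)$ explicitly. Since $F_1$ arises from $W_{4n}$ via the generalized star relation, I would identify $\tilde F_1$ with $D_{\tilde F_1} = (4n,0;(1,2),(1,4n),(2n-1,4n))$, whose chain word is $W_{4n}$ from the notation table preceding Lemma~\ref{lem:star}. The claim $\W(F_1) = W_{4n}^{4c^+}$ then follows from the relation $(W_{4n})^{|\tilde F_1|} = T_{d_1} T_{d_2}^{(2n-1)^+}$ in Theorem~\ref{thm:gen_starreln}(i): taking the appropriate power extracts the boundary twist with the correct inverse exponent, exactly as the $\mu_{z_i,i}$ bookkeeping in the definitions preceding Lemma~\ref{lem:star} prescribes. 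Here I would check that the exponent $4c^+$ is the value of $\tfrac{m_1}{\gcd(m_1,|\tilde F_1|)}$ times the relevant inverse, matching $\mu_{1,1}$ (or whichever $\mu_{z,1}$ applies) for the fixed point $(c,n) = (1,|\tilde F_1|)_{m_1,\tilde F_1}$.

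With both pieces in hand, I would invoke Lemma~\ref{lem:star} with $s=2$, $\W(F_1) = W_{4n}^{4c^+}$, and $\W(F_2)$ the given word for $F_2$. The lemma yields
$$
\W(F) = \W(F_2)\,\W(F_1)\,(T_{\gamma})^{-\eta} = \W(F_2)\,W_{4n}^{4c^+}\,T_{\gamma}^{-\eta},
$$
where $\gamma = \gamma_1$ is the separating curve from the $1$-compatibility and $\eta = \eta_1 = \tfrac{\mu_{z_1,1}+\mu_{z_2,2}}{n}$ is the twisting-correction exponent defined exactly as in Lemma~\ref{lem:star}. Since $\W(F_1)$ commutes with $\W(F_2)$ (their supports lie in the disjoint subsurfaces $\Sigma_1(F)$ and $\Sigma_2(F)$), the order of the two factors is immaterial up to conjugacy, giving the stated expression.

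The main obstacle I anticipate is the bookkeeping in identifying $\eta$ correctly, i.e.\ confirming that the residual boundary twisting from $\W(F_1)^n$ and from $\W(F_2)^n$ along the common curve $\gamma$ cancels to give an integer multiple of $n$, so that $\eta$ is well-defined and $\W(F)^n = 1$ as in the proof of Lemma~\ref{lem:star}. This amounts to checking the congruence $\mu_{z_1,1}+\mu_{z_2,2}\equiv 0 \pmod n$ for the particular fixed-point data of this compatibility, which should follow from $c_1 + c_2 \equiv 0 \pmod n$ exactly as argued in the general case; the remaining relabeling of curves to align with the conventions of Theorem~\ref{thm:gen_starreln} is routine.
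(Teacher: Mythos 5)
Your route coincides with the paper's: the proposition is stated there as the ``easy generalization'' of Example~\ref{exmp:order7_permadd}, i.e.\ recast the $1$-permutation addition as a $1$-compatibility of $F_2$ with the rotation $F_1$, identify $\W(F_1)$ as a power of $W_{4n}$, and feed the resulting $2$-tuple into Lemma~\ref{lem:star}. Your final two steps (invoking Lemma~\ref{lem:star} with $s=2$, and checking that $\mu_{z_1,1}+\mu_{z_2,2}\equiv 0 \pmod n$ so that $\eta$ is an integer) are exactly the intended argument, and the labeling slip of matching $(n-c,n)$ in $D_{F_1}$ against $(1,n)$ rather than $(c,n)$ in $D_{F_2}$ is one you already flag yourself.

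The one genuinely incomplete step is the justification of $\W(F_1)=W_{4n}^{4c^{+}}$, which is the actual content of the proposition. The relation $(W_{4n})^{4n}=T_{d_2}^{(2n-1)^{+}}T_{d_1}$ from Theorem~\ref{thm:gen_starreln} and the $\mu$-bookkeeping only govern the boundary twisting and the two genuine fixed points $(1,4n)$ and $(2n-1,4n)$ of $W_{4n}$; what must be shown is that $W_{4n}^{4c^{+}}$ is \emph{conjugate} to $F_1$, i.e.\ that $D_{W_{4n}^{4c^{+}}}=(n,1;(c,n),(n-c,n))$ with no further cone points. The danger comes from the $(1,2)$-entry of $D_{W_{4n}}$: it corresponds to an orbit of size $2n$ whose points have stabilizer $\langle W_{4n}^{2n}\rangle$, and this orbit contributes extra order-$2$ cone points to the quotient by $\langle W_{4n}^{4c^{+}}\rangle=\langle W_{4n}^{4}\rangle$ precisely when $W_{4n}^{2n}\in\langle W_{4n}^{4}\rangle$, i.e.\ when $4\mid 2n$, i.e.\ when $n$ is even. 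So the identification $\W(F_1)=W_{4n}^{4c^{+}}$ is \emph{false} for even $n$, and the odd-order hypothesis of the proposition is exactly what saves it --- yet your argument never invokes that hypothesis. Adding this one computation (together with the Riemann--Hurwitz check that both $F_1$ and $W_{4n}^{4c^{+}}$ live on $S_n$ with orbit genus $1$) closes the gap and makes the proposal a complete proof along the paper's own lines.
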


\section{Symplectic method}\label{sec:symp_method}
Let $F \in \Mod(S_g)$ be of order $n$. In this section, we give a method by which one can use $\Psi(F)$ for finding a representation of $F$ as a word $\W(F)$, up to conjugacy. (Here, we compute $\Psi(F)$ using Theorem~\ref{thm:rep_irrtype1} and Remark~\ref{rem:irr_symp_blocks}.) In other words, we have to find a suitable candidate for $\W(F)$ in $$\M_F := \{G \in \Mod(S_g):\Psi(G) \text{ is conjugate to } \Psi(F)\}.$$ 

Let $\Psi_m$ denote the composition of $\Psi$ with the canonical projection $\text{Sp}(2g,\Z) \to \text{Sp}(2g,\Z_m)$. It is well known that $\ker \Psi_m$ (also known as the level-$m$ subgroup $\Mod(S_g)[m]$) is torsion-free for $m \geq 3$ (see~\cite[Theorem 6.9]{FM}). Considering that the conjugacy class of $\Psi(F)$ can be infinite in $\text{Sp}(2g,\Z)$, for computational purposes, we consider the set 
$$\TM_F := \{G \in \Mod(S_g):\Psi_3(G) \text{ is conjugate to } \Psi_3(F)\}$$ 
in place of $\M(F)$. The key idea behind our method is to provide a systematic procedure for carefully and efficiently sifting through the elements in set $\TM_F$ to find a suitable candidate for $\W(F)$. 

\subsection{Structured searching for $\W(F)$} To standardize our procedure, we consider the Lickorish~\cite{WBRL} generating set $\L_g$ for $\Mod(S_g)$ and assume that each element in $\M_F$ is a word in $\L_g$. To fix notation, let $$\L_g = \{T_{a_1},T_{b_1},T_{c_1},T_{b_2},T_{a_2},T_{c_2}, \ldots,T_{c_{g-1}},T_{b_g},T_{a_g}\}$$ with the $a_i$, the $b_i$ and the $c_i$ are indicated in Figure~\ref{fig:2_lick_gens} below. 
\begin{figure}[H]
        \labellist
		\tiny
		\pinlabel $a_1$ at 109 50
		\pinlabel $b_1$ at 132 185
		\pinlabel $a_g$ at 492 50
		\pinlabel $b_g$ at 525 185
		\pinlabel $c_{1}$ at 219 90
		\pinlabel $c_{g-1}$ at 430 85
		\endlabellist
	\includegraphics[width=40ex]{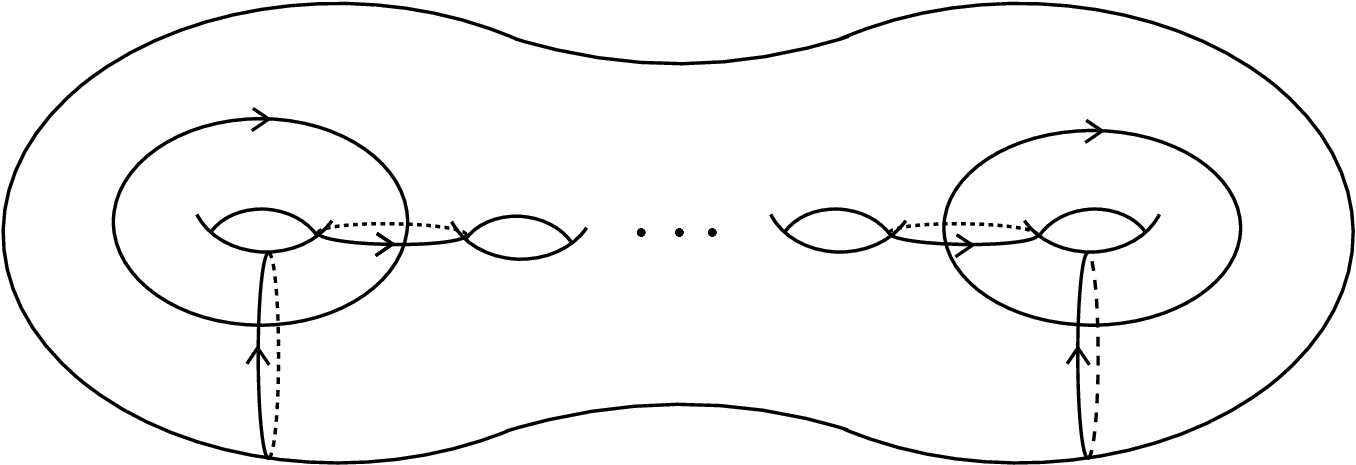}
	\caption{The curves in $S_g$ involved in the Lickorish twists.}
	\label{fig:2_lick_gens}
\end{figure} 
\noindent 

\noindent In order to make our search for $\W(F)$ in $\TM_F$ more efficient, we first have to ensure the implementation of a well-structured search process.  For achieving this, we introduce the notion of the \textit{depth} of a word. Let $\W$ be a reduced word in $\L_g$, and let $n_i$ be the number of times the $i^{th}$ generator in $\L_g$ appears in $\W$. Then the \textit{depth $d(\W)$} of the word $\W$ is defined by $d(\W) =\max \{n_i: 1 \leq i \leq 3g-1\}$. For example, for the word $\W = T_{a_1}^5 T_{a_2}^4 T_{b_2} T_{a_2}^2 T_{b_1}$, $d(\W) = 2$. Further, we denote the largest power (in absolute value) of a Dehn twist in $\L_g$ appearing in a word $W$ by $p(\W)$, and fix the notation $\TM_{F}^{i,j}:= \{W \in \TM_F: d(\W) =i \text{ and } p(\W) = j\}.$ Thus, we will begin our search for $\W(F)$ in $\TM_{F}^{1,1}$, and then gradually broaden our search in an incremental manner to $\TM_F^{i,j}$ for $i,j >1$.

\subsection{Discarding redundant words} To begin with, we apply the basic property that Dehn twists about isotopically disjoint curves in $S_g$ commute, we would like to discard redundant variants of words that are equivalent up to commutativity of the twists in $\L_g$. For this reason, we assign numbers $1$ through $3g-1$ for the Dehn twists in appearing (in sequence) in $\L_g$. A permutation $\sigma$ of $\{1,\ldots 3g-1\}$ is said to be \textit{good} if for $1 \leq i \leq 3g-1$, either $\sigma(i+1) - \sigma(i) \leq 1$ or $(\sigma(i), \sigma(i+1)) = (3k-2, 3k) $ for some \(k\). Thus, in our process, we will filter out many (redundant) words in $\TM_F$ by considering only words arise as good permutations of the (powers of the) Dehn twists appearing in $\L_g$. We will further discard several non-periodic words in $\TM_F$ by applying the Penner's construction~\cite{RP} of pseudo-Anosov mapping classes.
\begin{theorem}
\label{thm:pA_recipe}
Let $\C = \{\alpha_1,\ldots,\alpha_n\}$ and $\D = \{\alpha_{n+1},\ldots,\alpha_{n+m}\}$ be multicurves in $S_g$ that together fill $S_g$. Then any product of positive powers of the $T_{\alpha_i}$, for $i=1,\ldots,n$ and negative powers of the $T_{\alpha_{n+j}}$, for $j=1,\ldots,m$, where each $\alpha_i$ and each $\alpha_{n+j}$ appears at least once, is pseudo-Anosov.
\end{theorem}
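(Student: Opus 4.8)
The plan is to prove this via the theory of measured train tracks and the Perron--Frobenius theorem, following the circle of ideas originating with Penner. First I would exploit the filling hypothesis: since $\C$ and $\D$ together fill $S_g$, their union $\C \cup \D$ cuts the surface into polygonal disks, and by smoothing every intersection point of a curve of $\C$ with a curve of $\D$ in a single consistent fashion one obtains a bigon track $\tau$ on $S_g$. The smoothing convention is chosen so that $\tau$ carries each $\alpha_i$ ($1 \le i \le n$) and each $\alpha_{n+j}$ ($1 \le j \le m$), and moreover so that the space $V(\tau) \cong \mathbb{R}_{\ge 0}^{E}$ of non-negative transverse weights on the set $E$ of branches of $\tau$ is preserved by each of the maps $T_{\alpha_i}$ and $T_{\alpha_{n+j}}^{-1}$.

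Next I would record the action of the individual twists on $V(\tau)$. With the sign conventions above, each positive twist $T_{\alpha_i}$ and each negative twist $T_{\alpha_{n+j}}^{-1}$ acts on a weight vector by a non-negative integer matrix of the form $I + N$, where $N \ge 0$ encodes how weight is pushed across the relevant branch. The crucial structural point is that positive twists along $\C$ and negative twists along $\D$ contribute with the \emph{same} sign, which is precisely why the hypothesis mixes positive and negative powers. Writing the given word as $\phi = h_{i_1} \cdots h_{i_k}$, with each $h$ one of these twists, its action on $V(\tau)$ is the product matrix $M = M_{i_1} \cdots M_{i_k}$, a non-negative integer matrix with positive diagonal.

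The heart of the argument is to show that $M$ is a Perron--Frobenius (primitive) matrix. Here I would invoke the stipulation that every $\alpha_i$ and every $\alpha_{n+j}$ is twisted at least once: because the curves fill, the graph recording which branches feed into which under successive twists is strongly connected, and the use of each twist then guarantees that a sufficiently high power of $M$ is strictly positive. By the Perron--Frobenius theorem, $M$ has a unique eigenvalue $\lambda$ of largest modulus; $\lambda$ is real, simple, and positive, and its eigenvector lies in the interior of the weight cone, hence determines a measured foliation $\mathcal{F}^u$ carried by $\tau$ with $\phi(\mathcal{F}^u) = \lambda\,\mathcal{F}^u$. Applying the same analysis to $\phi^{-1}$ (equivalently, to the track obtained by the opposite smoothing) produces a transverse measured foliation $\mathcal{F}^s$ with $\phi(\mathcal{F}^s) = \lambda^{-1}\,\mathcal{F}^s$, transversality being forced by the filling property. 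Since $M$ is primitive and is neither the identity nor a permutation, the weights grow and $\lambda > 1$ strictly, so the triple $(\mathcal{F}^u, \mathcal{F}^s, \lambda)$ exhibits $\phi$ as pseudo-Anosov.

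The main obstacle I anticipate is twofold. First, one must set up the smoothing of $\C \cup \D$ so that all $n+m$ twist maps simultaneously act by non-negative matrices on one and the same weight cone $V(\tau)$; this is the delicate combinatorial-geometric input, and it is exactly where the positive/negative sign split is used. Second, upgrading mere non-negativity of $M$ to genuine primitivity---and thereby to $\lambda > 1$ with an interior, filling eigen-foliation---requires combining the filling hypothesis with the assumption that each curve is twisted at least once; verifying strict positivity of a power of $M$ is the technical crux on which the conclusion that $\phi$ is pseudo-Anosov (rather than reducible or periodic) ultimately rests.
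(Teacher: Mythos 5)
The paper does not prove this statement at all: it is Penner's pseudo-Anosov construction, quoted verbatim as a known theorem with a citation to Penner's original article, and used only as a filter for discarding words in the symplectic method. So there is no in-paper proof to compare against. Your sketch is, in outline, exactly Penner's original argument: smooth the intersections of $\C\cup\D$ consistently to get a bigon track carrying all the curves, observe that the positive twists along $\C$ and the negative twists along $\D$ act on the cone of transverse weights by non-negative integer matrices of the form $I+N$, show the product is primitive because the curves fill and each twist occurs at least once, and extract the unstable and stable foliations from the Perron--Frobenius eigenvectors of the word and its inverse (the inverse being again of Penner type with the roles of $\C$ and $\D$ exchanged). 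That is the correct and standard route, and you correctly identify the two places where the real work lies: arranging the smoothing so that all $n+m$ twist matrices are simultaneously non-negative on one weight cone, and upgrading non-negativity of the product to primitivity. Both of these are asserted rather than carried out, as is the final step converting an interior PF eigenvector on a filling bigon track into a genuine pseudo-Anosov pair of transverse measured foliations (which needs a train-track criterion or an argument ruling out reducibility); filling those in would essentially reproduce Penner's paper, which is the appropriate reference here.
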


\subsection{Searching for periodics} Let $i(\alpha, \beta)$ be the geometric intersection number of essential simple closed curves $\alpha, \beta$ in $S_g$. In order to identify the periodics in $\TM_F$, we will (in general) use the well-known Bestvina-Handel algorithm~\cite{BH}, which provides an effective way of identifying them. This brings us to the following remark

\begin{rem}
\label{rem:iden_rec_period}
The Bestvina-Handel algorithm also allows us to effectively distinguish between reducible mapping classes of finite and infinite order. When the algorithm detects a reduction, we can conclude whether the mapping class is periodic (or not) by observing whether the growth rate of the transition matrix associated with the mapping class is $1$ (or otherwise). 
\end{rem}

\begin{rem}
\label{rem:iden_irrperiod}
When $F$ is irreducible, the elements $\TM_F$ are either irreducible periodics or pseudo-Anosovs. In this context, we have observed that it is easier to identify the periodics by simply determining whether the orbits (under $F$) of certain appropriately chosen curves are finite. To this end, a software named Teruaki for Mathematica 7 (or TKM7) by Sakasai-Suzuki~\cite{SST} designed for the visualization of actions of Dehn twists on curves (in $S_g$) really comes in handy.
\end{rem}

\noindent We are now in a position to describe our method. 

\begin{algo}[Symplectic method] 
\label{algo:symp_method}
Let $F \in \Mod(S_g)$ be of order $n$. 
\begin{enumerate}[\textit{Step} 1.] 
\item Compute $\Psi(F)$ (up to conjugacy) using Theorem~\ref{thm:rep_irrtype1} and Remark~\ref{rem:irr_symp_blocks}.
\item Set $i=1$, $j=1$, and $flag =0$
\item Repeat Steps 4-5 until $flag =1$. 
\item Repeat Steps 4a - 4f, while $j \leq n$. 
\begin{enumerate}[\textit{Step 4}a.]
\item Compute the elements in $\TM_{F}^{i,j}$ up to good permutations.
\item Discard the words in $\TM_{F}^{i,j}$ that are pseudo-Anosovs using Theorem~\ref{thm:pA_recipe} (or Remark~\ref{rem:iden_irrperiod} when $F$ is irreducible).
\item If $|\TM_{F}^{i,j}| >0$, repeat Steps 4d - 4e, for each $W \in \TM_{F}^{i,j}$. Else, proceed to Step 4f. 
\item Apply the Bestvina-Handel algorithm to determine the mapping class type of $W$. 
\item If $W$ is periodic, then set $\W(F) = W$, $flag =1$, and then proceed to Step 5. Else, set $\TM_{F}^{i,j} = \TM_{F}^{i,j} \setminus \{W\}$ and proceed to Step 4c. 
\item Set $j=j+1$ and proceed to Step 4. 
\end{enumerate}
\item If $flag =0$, set $i = i+1$ and proceed to Step 3. Else, proceed to Step 6.
\item $\W(F)$ is the desired representation of $F$ as a word in Dehn twists, up to conjugacy.
\end{enumerate}
\end{algo}

\begin{exmp}
\label{eg:symp_rep_order9_map}
Consider the irreducible Type 1 mapping class \(F \in \Mod(S_3)\) with \(D_F = (9,0;(1,3),(1,9),(5,9))\). Clearly, $F$ is neither rotational nor star-realizable, so we will use Algorithm~\ref{algo:symp_method} to find $\W(F)$. We will follow the notation from Subsections~\ref{sec:pri_into_irred}-\ref{sec:per_symp_rep}. By virtue of Theorem~\ref{res:1}, $F$ is realized as a rotation of the polygon $\P_F$ by $4\pi/9$, as shown in Figure~\ref{fig:polys} below, with 
\begin{gather*} 
L(\P_F) = \{ a_1,a_2,a_3,a_4,a_5,a_6,a_7,a_8,a_9\} \text{ and } \\ W(\P_F) = a_1a_2a_3a_4a_5a_6a_2^{-1}a_7a_4^{-1}a_8a_6^{-1}a_9a_7^{-1}a_1^{-1}a_8^{-1}a_3^{-1}a_9^{-1}a_5^{-1}.
\end{gather*}
Let $\mathcal{P}':=\P_3$ be the standard $12$-gon (realizing the surface $S_3$) with  $$L(\P_3) = \{x_1,y_1,x_2,y_2,x_3,y_3\} \text{ and }W(\P_3) = [x_1, y_1] [x_2, y_2] [x_3,y_3].$$
		\begin{figure}[h]
		\centering
		\begin{subfigure}{.5\textwidth}
			\centering
			\includegraphics[width=.8\textwidth]{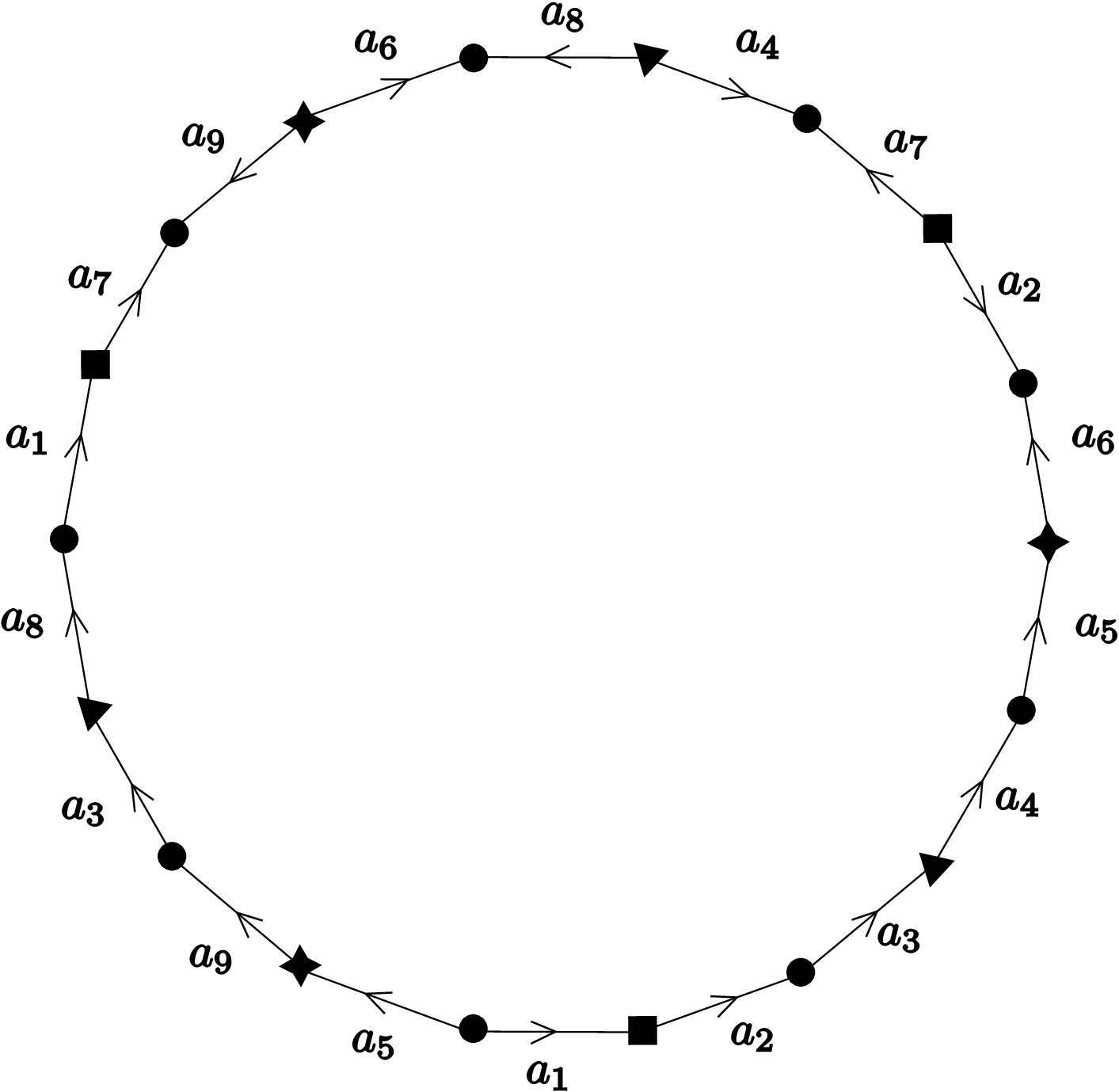}
			\caption{$\mathcal{P}_F$}
			\label{fig:3_g309}
		\end{subfigure}%
		\begin{subfigure}{.5\textwidth}
			\centering
			\includegraphics[width=.8\textwidth]{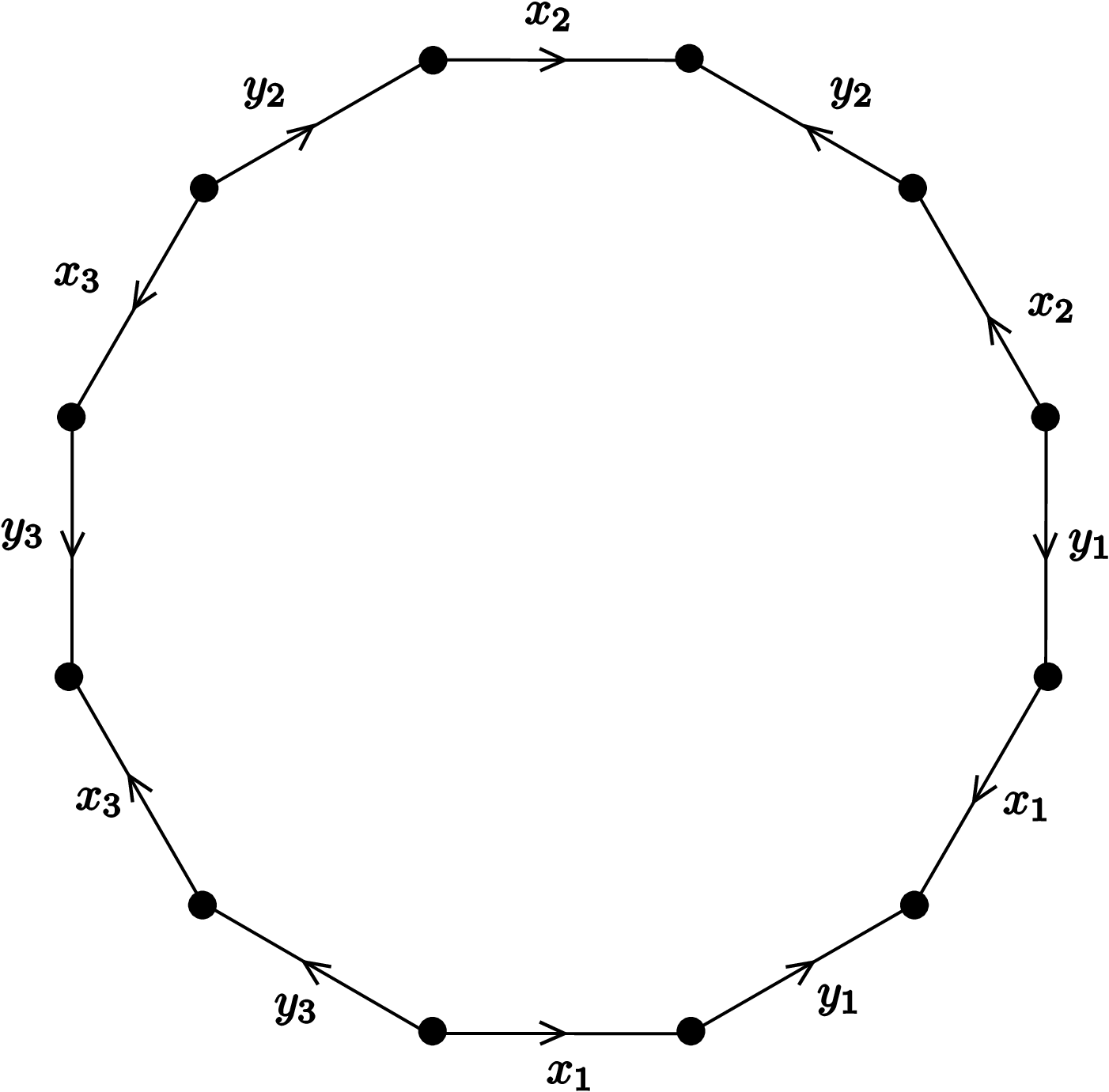}
			\caption{$\mathcal{P}_3$}
			\label{fig:4_g3stdpolyg}
		\end{subfigure}
		\caption{The polygons $\P_F$ and $\P_3$.}
		\label{fig:polys}
	\end{figure}
Denoting $\phi = \phi_{\P_F}$ and $f = f_{\P_F}$, we obtain $\varphi  = f^{-1} \phi f$ (as in Theorem~\ref{thm:rep_irrtype1}) in the following manner. 
$$
\small
\begin{array}{lclclcl}
		\left[ x_1 \right] & \xrightarrow{f} & \left[ b_1 \right]+\left[ b_3 \right] & \xrightarrow{\phi} &  \left[ b_2 \right]-\left[ b_3 \right] +\left[b_4\right] &\xrightarrow{f^{-1}} & \left[y_1\right] - \left[x_1\right] +\left[y_3\right]-\left[x_3\right] \\
	
		\left[ y_1 \right] &\xrightarrow{f}& \left[ b_1 \right]+\left[ b_2 \right] &\xrightarrow{\phi} &  -\left[ b_1 \right]-\left[ b_3 \right]-\left[ b_6 \right] & \xrightarrow{f^{-1}} &  - \left[ x_1 \right] +\left[ y_2 \right] - \left[x_2\right] \\
		
		\left[ x_2 \right] &  \xrightarrow{f} &\left[ b_4 \right] + \left[b_5\right] + \left[b_6\right]+\left[b_2\right] & \xrightarrow{\phi} & \left[ b_3 \right] - \left[b_4\right] + \left[b_5\right] - \left[b_6\right] & \xrightarrow{f^{-1}} & -\left[y_1\right]+\left[x_1\right]+2\left[y_2\right] \\ 
		
		& & & &&& -\left[x_2\right]-2\left[y_3\right] + 2\left[x_3\right]\\ 
		
		\left[ y_2 \right] & \xrightarrow{f} & \left[ b_2 \right] + \left[b_4\right] + \left[b_5\right] & \xrightarrow{\phi} & -\left[ b_4 \right] - \left[b_6\right] & \xrightarrow{f^{-1}} & \left[y_2 \right] - \left[x_2\right]-\left[y_3\right] + \left[x_3\right] \\

		\left[ x_3 \right] & \xrightarrow{f} & -\left[ b_4 \right] - \left[b_5\right] & \xrightarrow{\phi} &  \left[ b_4 \right] - \left[b_1 \right] & \xrightarrow{f^{-1}} & -\left[y_1\right] +\left[y_2\right]+\left[y_3\right]\\

		\left[ y_3 \right] & \xrightarrow{f} & -\left[ b_5 \right] & \xrightarrow{\phi} & \left[ b_4 \right] &  \xrightarrow{f^{-1}} & \left[ y_3 \right] - \left[x_3\right] \\
	 
\end{array}$$ Here, $[b_1] = [a_8^{-1} a_3^{-1}],[b_2] = [a_9^{-1} a_5^{-1}],[b_3] = [a_2^{-1} a_1^{-1}],[b_4] = [a_6^{-1} a_9],[b_5] = [a_7^{-1} a_2], \text{ and }[b_6] = [a_4^{-1} a_8]$.
Thus, the matrix $M_{\varphi}$ representing the conjugacy class of $\Psi(F)$ in $\text{Sp}(2g,\Z)$ is given by
	\[M_{\varphi} =
	\left[ \begin{array}{rrrrrr}
	-1 & -1 & 1 &0 & 0 &0 \\
	1 &0 & -1 & 0 & -1 & 0\\
	0 & -1 & -1 & -1 & 0 & 0\\
	0 & 1 & 2 & 1 & 1 & 0\\
	-1 & 0 & 2 & 1 & 0 & -1\\
	1 & 0 & -2 & -1 & 1 & 1 \\
	\end{array}
	\right],   \text{ and }
		 \Psi_3(F) =
	\begin{bmatrix}
	2 & 2 & 1 & 0 & 0 & 0 \\
	1 & 0 & 2 & 0 & 2 & 0\\
	0 & 2 & 2 & 2 & 0 & 0\\
	0 & 1 & 2 & 1 & 1 & 0\\
	2 & 0 & 2 & 1 & 0 & 2\\
	1 & 0 & 1 & 2 & 1 & 1
	\end{bmatrix}
\]
Following Algorithm~\ref{algo:symp_method}, we begin our search for $\W(F)$ in $\TM_{F}^{1,1}$. As it turns out, even after considering only the words among the good permutations that are not Penner-type pseudo-Anosovs, we were still left with numerous (at least 150) possible candidates for $\W(F)$ in $\TM_{F}^{1,1}$. For brevity, we will demonstrate the algorithm on a small subcollection of words
\begin{gather*}
\TM_{F}^{1,1} (\sigma) =\{ T_{b_3}^{-1} T_{a_3}^{-1} T_{c_2}^{-1} T_{a_2}^{-1}T_{b_2}^{-1} T_{c_1}^{-1} T_{b_1}^{-1},
T_{b_3}^{-1}T_{c_2}^{-1} T_{a_2}^{-1}T_{b_2}^{-1} T_{c_1}^{-1} T_{b_1}^{-1}T_{a_1}^{-1}, \\
T_{b_3} T_{a_3} T_{c_2} T_{a_2} T_{b_2} T_{c_1} T_{b_1} ,
T_{b_3} T_{c_2} T_{a_2} T_{b_2} T_{c_1} T_{b_1} T_{a_1} \}
\end{gather*}
corresponding to the permutation of $\sigma  = (1\,7\,2\,8)(3\,6)(4\,5)$ on $\L_3$. Using Remark~\ref{rem:iden_irrperiod} (and the Teruaki software ~\cite{SST}), we can easily deduce that all words in $\TM_{F}^{1,1}(\sigma)$ are finite order. Thus, we may choose $\W(F)=T_{b_3} T_{c_2} T_{a_2} T_{b_2} T_{c_1} T_{b_1} T_{a_1}$.
\end{exmp}

Note that the symplectic method can be applied to any periodic mapping class. However, as the method is computationally intense, we recommend its application only for non-rotational periodics that are neither star-realizable nor chain-realizable. It is apparent that while the earlier methods were more restrictive in terms of their applicability, they work quite efficiently for the specific families of periodic mapping classes they were designed for.

\section{Applications}\label{sec:mthd_appls}

\subsection{Factoring periodic elements in $\Mod(S_3)$}\label{subs:words_mods3} Using Algorithms~\ref{alg:inv}, \ref{algo:chain_method}, \ref{algo:star_methodFT}, and~\ref{algo:symp_method}, in Table~\ref{tab:genus3_words} below, we provide a word $\W(F)$ (in Dehn twists) representing the conjugacy class of each periodic mapping class $F \in \Mod(S_3)$. (A similar list has also been derived in~\cite{SH}.)

\begin{table}[htbp]
	\small
	\begin{center}
		\scalebox{0.85}{
			\begin{tabular}{|c|c|c|c| }
				\hline 
				$|F|$ & $D_F$ & $\W(F)$ & Algorithm\\
				\hline 
				14 & $(14,0;(1,2),(3,7),(1,14))$ & $(T_{a_1}T_{b_1}T_{c_1}T_{b_2}T_{c_2}T_{b_3})$ & \ref{algo:chain_method} \\
				14 & $(14,0;(1,2),(2,7),(3,14))$ & $(T_{a_1}T_{b_1}T_{c_1}T_{b_2}T_{c_2}T_{b_3})^5$ & \ref{algo:chain_method} \\
				14 & $(14,0;(1,2),(1,7),(5,14))$ & $(T_{a_1}T_{b_1}T_{c_1}T_{b_2}T_{c_2}T_{b_3})^3$ & \ref{algo:chain_method} \\
				14 & $(14,0;(1,2),(6,7),(9,14))$ & $(T_{a_1}T_{b_1}T_{c_1}T_{b_2}T_{c_2}T_{b_3})^{11}$ & \ref{algo:chain_method} \\
				14 & $(14,0;(1,2),(5,7),(11,14))$ & $(T_{a_1}T_{b_1}T_{c_1}T_{b_2}T_{c_2}T_{b_3})^9$ & \ref{algo:chain_method} \\
				14 & $(14,0;(1,2),(4,7),(13,14))$ & $(T_{a_1}T_{b_1}T_{c_1}T_{b_2}T_{c_2}T_{b_3})^{13}$ & \ref{algo:chain_method} \\
				12 & $(12,0;(1,2),(5,12),(1,12))$ & $(T_{a_1}^2T_{b_1}T_{c_1}T_{b_2}T_{c_2}T_{b_3})$ & \ref{algo:chain_method} \\
				12 & $(12,0;(1,2),(7,12),(11,12))$ & $(T_{a_1}^2T_{b_1}T_{c_1}T_{b_2}T_{c_2}T_{b_3})^7$ & \ref{algo:chain_method} \\
				12 & $(12,0;(2,3),(1,4),(1,12))$ & $(T_{b_3} T_{c_2} T_{a_2} T_{b_2} T_{c_1} T_{b_1})$  & \ref{algo:symp_method} \\
				12 & $(12,0;(1,3),(1,4),(5,12))$ & $(T_{b_3} T_{c_2} T_{a_2} T_{b_2} T_{c_1} T_{b_1})^5$  & \ref{algo:symp_method} \\
				12 & $(12,0;(2,3),(3,4),(7,12))$ & $(T_{b_3} T_{c_2} T_{a_2} T_{b_2} T_{c_1} T_{b_1})^7$  & \ref{algo:symp_method} \\
				12 & $(12,0;(1,3),(3,4),(11,12))$ & $(T_{b_3} T_{c_2} T_{a_2} T_{b_2} T_{c_1} T_{b_1})^{11}$  & \ref{algo:symp_method} \\
				9 & $(9,0;(1,3),(5,9),(1,9))$ & $(T_{b_3} T_{c_2} T_{a_2} T_{b_2} T_{c_1} T_{b_1} T_{a_1})$ & \ref{algo:symp_method} \\
				9 & $(9,0;(2,3),(1,9),(2,9))$ & $(T_{b_3} T_{c_2} T_{a_2} T_{b_2} T_{c_1} T_{b_1} T_{a_1})^5$ & \ref{algo:symp_method} \\
				9 & $(9,0;(1,3),(2,9),(4,9))$ & $(T_{b_3} T_{c_2} T_{a_2} T_{b_2} T_{c_1} T_{b_1} T_{a_1})^7$ & \ref{algo:symp_method} \\
				9 & $(9,0;(2,3),(7,9),(5,9))$ & $(T_{b_3} T_{c_2} T_{a_2} T_{b_2} T_{c_1} T_{b_1} T_{a_1})^2$ & \ref{algo:symp_method} \\
				9 & $(9,0;(1,3),(8,9),(7,9))$ & $(T_{b_3} T_{c_2} T_{a_2} T_{b_2} T_{c_1} T_{b_1} T_{a_1})^4$ & \ref{algo:symp_method} \\
				9 & $(9,0;(2,3),(4,9),(8,9))$ & $(T_{b_3} T_{c_2} T_{a_2} T_{b_2} T_{c_1} T_{b_1} T_{a_1})^8$ & \ref{algo:symp_method} \\
				8 & $(8,0;(3,4),(1,8),(1,8))$&$(T_{a_1}T_{b_1}T_{c_1}T_{b_2}T_{c_2}T_{b_3}T_{a_3})$ & \ref{algo:chain_method} \\
				8 & $(8,0;(1,4),(3,8),(3,8))$&$(T_{a_1}T_{b_1}T_{c_1}T_{b_2}T_{c_2}T_{b_3}T_{a_3})^3$ & \ref{algo:chain_method} \\
				8 & $(8,0;(3,4),(5,8),(5,8))$&$(T_{a_1}T_{b_1}T_{c_1}T_{b_2}T_{c_2}T_{b_3}T_{a_3})^5$ & \ref{algo:chain_method} \\
				8 & $(8,0;(1,4),(7,8),(7,8))$&$(T_{a_1}T_{b_1}T_{c_1}T_{b_2}T_{c_2}T_{b_3}T_{a_3})^7$ & \ref{algo:chain_method} \\
				8 & $(8,0;(1,4),(5,8),(1,8))$& $(T_{a_1}T_{c_1}T_{b_3}T_{a_3}T_{c_2}T_{b_2}T_{b_1}) $ & \ref{algo:symp_method} \\
				8 & $(8,0;(3,4),(7,8),(3,8))$& $(T_{a_1}T_{c_1}T_{b_3}T_{a_3}T_{c_2}T_{b_2}T_{b_1})^3 $ & \ref{algo:symp_method} \\
				7 & $(7,0;(5,7),(1,7),(1,7))$&$(T_{a_1}^2T_{b_1}T_{c_1}T_{b_2}T_{c_2}T_{b_3}T_{a_3})$ & \ref{algo:chain_method} \\
				7 & $(7,0;(3,7),(2,7),(2,7))$&$(T_{a_1}^2T_{b_1}T_{c_1}T_{b_2}T_{c_2}T_{b_3}T_{a_3})^4$ & \ref{algo:chain_method} \\
				7 & $(7,0;(1,7),(3,7),(3,7))$&$(T_{a_1}^2T_{b_1}T_{c_1}T_{b_2}T_{c_2}T_{b_3}T_{a_3})^5$ & \ref{algo:chain_method} \\
				7 & $(7,0;(6,7),(4,7),(4,7))$&$(T_{a_1}^2T_{b_1}T_{c_1}T_{b_2}T_{c_2}T_{b_3}T_{a_3})^2$ & \ref{algo:chain_method} \\
				7 & $(7,0;(4,7),(5,7),(5,7))$&$(T_{a_1}^2T_{b_1}T_{c_1}T_{b_2}T_{c_2}T_{b_3}T_{a_3})^3$ & \ref{algo:chain_method} \\
				7 & $(7,0;(2,7),(6,7),(6,7))$&$(T_{a_1}^2T_{b_1}T_{c_1}T_{b_2}T_{c_2}T_{b_3}T_{a_3})^6$ & \ref{algo:chain_method} \\
				7 & $(7,0;(4,7),(2,7),(1,7))$& $ (T_{b_1}T_{b_2}T_{c_2}T_{a_3}T_{b_3}T_{c_1}T_{a_1}^2)^{-1} $  & \ref{algo:symp_method} \\
				7 & $(7,0;(3,7),(5,7),(6,7))$& $ (T_{b_1}T_{b_2}T_{c_2}T_{a_3}T_{b_3}T_{c_1}T_{a_1}^2)^{-6} $  & \ref{algo:symp_method} \\
				6 & $(6,0;(1,2),(1,2),(1,6),(5,6))$&$(T_{a_1}^2T_{b_1}T_{c_1}T_{b_2}T_{c_2}T_{b_3})^2 $& \ref{algo:chain_method} \\
				6 & $(6,0;(1,2),(2,3),(2,3),(1,6))$& $(T_{a_1}T_{b_1})^{-1}(T_{a_2}T_{b_2}T_{c_2}T_{b_3}T_{a_3})$& \ref{algo:chain_method} \\
				6 & $(6,0;(1,2),(1,3),(1,3),(5,6))$& $(T_{a_1}T_{b_1})(T_{a_2}T_{b_2}T_{c_2}T_{b_3}T_{a_3})^{-1}$& \ref{algo:chain_method} \\
				4 & $(4,1;(1,2),(1,2))$& $(T_{a_2}T_{c_1}T_{b_2}T_{c_2}T_{b_3})^{2}T_{a_1}^{-1}$ & \ref{algo:star_methodFT} \\
				4 & $(4,0;(1,2),(1,2),(1,2),(1,4),(1,4))$& $(T_{a_1}^2T_{b_1})(T_{a_2}T_{a_2'}T_{b_2})^3(T_{a_3}^2T_{b_3})(T_{s_1}T_{s_2})^{-1}$  & \ref{algo:star_method} \\
				4 & $(4,0;(1,2),(1,2),(1,2),(3,4),(3,4))$& $(T_{a_1}^2T_{b_1})^3(T_{a_2}T_{a_2'}T_{b_2})(T_{a_3}^2T_{b_3})^3(T_{s_1}T_{s_2})^{-1}$  & \ref{algo:star_method} \\
				4 & $(4,0;(1,4),(1,4),(3,4),(3,4))$& $(T_{a_1}T_{b_1}T_{c_1}T_{b_2}T_{c_2}T_{b_3}T_{a_3})^2$ & \ref{algo:chain_method} \\
				4 & $(4,0;(1,4),(1,4),(1,4),(1,4))$& $(T_{b_3} T_{c_2} T_{a_2} T_{b_2} T_{c_1} T_{b_1})^3$ & \ref{algo:symp_method} \\
				4 & $(4,0;(3,4),(3,4),(3,4),(3,4))$& $(T_{b_3} T_{c_2} T_{a_2} T_{b_2} T_{c_1} T_{b_1})^9$ & \ref{algo:symp_method} \\
				3 & $(3,0;(1,3),(1,3),(1,3),(1,3),(2,3))$& $(T_{a_1}^3T_{b_1})(T_{a_2}^2T_{a_2'}T_{b_2})^2(T_{a_3}^3T_{b_3})(T_{s_1}T_{s_2})^{-1}$  & \ref{algo:star_method} \\
				3 & $(3,0;(2,3),(2,3),(2,3),(2,3),(1,3))$& $(T_{a_1}^3T_{b_1})^2(T_{a_2}^2T_{a_2'}T_{b_2})(T_{a_3}^3T_{b_3})^2(T_{s_1}T_{s_2})^{-1}$  & \ref{algo:star_method} \\
				3 & $(3,1;(1,3),(2,3))$& $(T_{a_1}^2T_{c_1}T_{b_1})(T_{a_3}^2T_{c_2}T_{b_3})^2(T_{a_2}T_{a_2'})^{-1}$& \ref{algo:star_methodFT} \\
				2 & $(2,2;1)$& $(T_{a_1}T_{b_1}T_{c_1}T_{b_2}T_{c_2})^3T_{a_3}^{-1}$& \ref{algo:star_methodFT} \\
				2 & $(2,1;((1,2),4))$&$(T_{a_2}T_{b_2}T_{x_1}T_{a_1}T_{b_1})^3(T_{a_3}T_{b_3}T_{a_3})^{-2}$ & \ref{alg:inv} \\
				2 & $(2,0;((1,2),8))$& $(T_{a_1}T_{b_1}T_{a_1})^{2}(T_{a_2}T_{b_2}T_{a_2'})^{-2}(T_{a_3}T_{b_3}T_{a_3})^{2}$& \ref{alg:inv} \\
				\hline
			\end{tabular}}
		\end{center}
		\caption{Words (in Dehn twists) representing the conjugacy classes of periodic elements in $\Mod(S_3)$.}
		\label{tab:genus3_words}
	\end{table}

\subsection{Roots of Dehn twists}\label{subs:roots_ofdtwists} For $g \geq 2$, let $c$ be a simple closed curve in $S_g$. A \textit{root of $T_c$ of degree n} is an $F \in \Mod(S_g)$ such that $F^n  = T_c$. 

\subsubsection{Dehn twists about nonseparating curves} When $c$ is nonseparating, Margalit-Schleimer~\cite{MS} gave the first example of such a root of degree $2g-1$ in $\Mod(S_g)$. A complete classification of such roots was obtained~\cite{MK1}, where it was also shown that the Margalit-Scleimer root (of degree $2g-1$) had the largest possible degree in $\Mod(S_g)$. A periodic mapping class $\bar{F} \in \Mod(S_{g-1})$ is said to be \textit{degree n root-realizing} if the $\langle \bar{\F} \rangle$-action on $S_{g-1}$ has two distinguished fixed points where the induced local rotation angles add up to $2\pi/n \pmod{2\pi}$. Given a root-realizing $\bar{F} \in \Mod(S_{g-1})$ of order $n$ with distinguished fixed points $P_1$ and $P_2$, one can remove $\langle \bar{\F} \rangle$-invariant neighborhoods around the $P_i$ and then attach an annulus $A$ with an $(1/n)^{th}$-twist connecting the resulting boundary components to realize a root $F \in \Mod(S_g)$ of a Dehn twist $T_c$ about the non-separating curve $c$ in $A$. Conversely, for $g \geq 2$, given a root $F \in \Mod(S_g)$ of $T_c$  of degree $n$, one can reverse this process to recover a root-realizing periodic mapping class $\bar{F} \in \Mod(S_{g-1})$. Thus, the conjugacy class of a typical root realizing $\bar{F} \in \Mod(S_{g-1})$ that corresponds to the conjugacy class of a root $F \in \Mod(S_g)$ of degree $n$ has the form 
$$D_{\bar{F}}=(n,g_0;(a,n),(b,n), (c_1,n_1),\ldots,(c_{\ell}, n_{\ell})),$$ 
where $a+b \equiv ab \pmod{n}$. Here the pairs $(a,n)$ and $(b,n)$ represent the fixed points (of the $\langle \bar{\F} \rangle$-action) involved in the construction of the root $F$. We will now describe a family of roots that can be represented as words in Dehn twists by using a minor modification of the method described in Algorithm~\ref{algo:star_methodFT}.

\begin{defn}
\label{defn:star_root}
A root $F \in \Mod(S_g)$ of $T_c$ of degree $n$ is said to be \textit{star-realizable} if the following conditions hold. 
\begin{enumerate}[(i)]
\item The root-realizing periodic mapping class $\bar{F} \in \Mod(S_{g-1})$ with $$D_{\bar{F}}=(n,g_0;(a,n),(b,n),(c_1,n_1),\ldots,(c_{\ell},n_{\ell})),$$  $a+b \equiv ab \pmod{n}$, is star-realizable. 
\item Suppose that $\bar{F} = F_{\T}^m$, for some star-realizable $F_{\T}$ as in Definition~\ref{defn:starreal_FTtuple} so that the pairs $(a,n)$ (resp. $(b,n)$) belong to $D_{F_{i}}$ (resp. $D_{F_{j}}$). Then 
\begin{gather*} 
(a,n) \in \{(1, |\tilde{F}_i|)_{m_i,\tilde{F}_i}, (|\tilde{F}_i|/2-1, |\tilde{F}_i|)_{m_i,\tilde{F}_i},(|\tilde{F}_i|-2, |\tilde{F}_i|)_{m_i,\tilde{F}_i}\} \\ \text{ and } \\ (b,n) \in \{(1, |\tilde{F}_{j}|)_{{m_{j},\tilde{F}_{j}}}, (|\tilde{F}_{j}|/2-1, |\tilde{F}_{j}|)_{m_{j},\tilde{F}_{j}},(|\tilde{F}_{j}|-2, |\tilde{F}_{j}|)_{m_{j},\tilde{F}_{j}}\}
\end{gather*}
\end{enumerate}
\end{defn}

\noindent Denoting  $\eta=\frac{\mu_{z_ii}+\mu_{z_jj}-1}{n}$, we will now we give an algorithm to represent a star-realizable root as a word in Dehn twists.

\begin{algo}
\label{algo:word_root}
Consider a star-realizable root $F \in \Mod(S_g)$ of $T_c$ as in Definition~\ref{defn:star_root}. 
\begin{enumerate}[\textit{Step} 1.]
\item Apply Algorithm~\ref{algo:star_methodFT} to obtain $\W(\bar{F})$.
\item Set $$\W(F) = \W(\bar{F})(T_c)^{-\eta}.$$
\item By Lemma~\ref{lem:star}, $\W(F)$ is the desired representation of $F$ as a word in Dehn twists, up to conjugacy.
\end{enumerate}
\end{algo}
 
Let $G \in \Mod(S_g)$ be the Margalit-Schleimer root (of $T_c$) of degree $2g-1$. In~\cite{MS} an expression for $\W(G)$ was derived using the chain relation, and in~\cite{MK1} it was shown that $D_{\bar{G}} = (2g-1, 0; (2, 2g-1),(2,2g-1), (-4,2g - 1))$. In the following example, we will apply Algorithm~\ref{algo:word_root} to derive the $\W(F)$ for a root $F \in \Mod(S_g)$ of degree $2g-1$ for which $D_{\bar{F}}$ is different from $D_{\bar{G}}$ for $g \geq 3$.

\begin{exmp}
Consider a root $F \in \Mod(S_g)$ of degree $2g-1$, where $D_{\bar{F}} = (2g-1,0;(g,2g-1),(g,2g-1),(2g-2,2g-1))$. Since $\W(\bar{F}) = W_{g-1,2g-1}^{2}$, by applying Algorithm~\ref{algo:word_root}, we get $$\W(F) = T_{a_1}^{-1}(T_{c_1}T_{a_2}\prod_{i=2}^{g-1} (T_{b_i}T_{c_i})T_{b_g}T_{a_g})^2.$$
\end{exmp}

\subsubsection{Fractional roots of Dehn twists about nonseparating curves} For $g \geq 2$, a \textit{fractional root of $T_c$ of degree $(m,n)$} is an $F \in \Mod(S_g)$ such that $F^n = T_c^m$. It is known~\cite{KR2} that such a root of $T_c$ may either preserve or reverse the two sides of $c$, and a side-preserving $F$ of degree $(m,n)$ satisfies $n \leq 4g-4$. Further, a side-preserving fractional root of degree $(2g-2,4g-4)$ always exists in $\Mod(S_g)$. As in the case of roots of Dehn twists, a side-preserving fractional root $F \in \Mod(S_g)$ of degree $(m,n)$ corresponds to an $\bar{F} \in \Mod(S_{g-1})$ of order $n$ such that the $\langle \bar{\F} \rangle$-action has two distinguished fixed points where the induced rotation angles add up to $2\pi m/n \pmod{2 \pi}$. In the following result, we will assume the notation of Theorem~\ref{thm:gen_starreln}.

\begin{prop}
There exists a conjugacy class of a side-preserving fractional root of $T_{a_1}$ of degree $(2g-2,4g-4)$ represented by
$$T_{a_2}\prod_{i=1}^{g-1}(T_{c_i}T_{b_{i+1}}).$$
\end{prop}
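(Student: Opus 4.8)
The plan is to prove the equality up to conjugacy, in keeping with every other word representation in this paper, and the crux is a \emph{uniqueness} statement: a side-preserving fractional root of $T_{a_1}$ of degree $(2g,4g)$ is unique up to conjugacy. Granting this, it suffices to produce one such root and to recognize it as the displayed word; the uniqueness then forces an arbitrary $F$ with the stated property to coincide with it, which is exactly what the proposition asserts.

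First I would set up the correspondence recalled immediately before the statement. A side-preserving fractional root $F$ of degree $(2g,4g)$ is determined up to conjugacy by its associated periodic map $\bar F \in \Mod(S_g)$ of order $4g$ together with the (fixed) twisting datum $2g/4g$, so uniqueness of $F$ reduces to uniqueness of $\bar F$. The map $\bar F$ has order $4g$ and two distinguished fixed points whose induced rotation angles add up to $2\pi\cdot\tfrac{2g}{4g}\equiv \pi \pmod{2\pi}$; a short computation shows these data are realized by $D_{\bar F}=(4g,0;(1,2),(1,4g),(2g-1,4g))$, since the inverses of $1$ and $2g-1$ modulo $4g$ give local angles $2\pi/4g$ and $2\pi(2g-1)/4g$ summing to $\pi$. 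Because $4g$ is the maximal degree of a side-preserving fractional root in $\Mod(S_g)$ (\cite{KR2}), the order, the genus, and the angle constraint force precisely this data set, and Proposition~\ref{prop:ds-action} then yields that the corresponding $\Z_{4g}$-action, and hence $\bar F$, is unique up to conjugacy. I would cite the classification of \cite{KR2} to rule out competing signatures carrying the same angle data.

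Next I would exhibit the explicit representative. The data set $D_{\bar F}$ coincides with $D_{W_{4g}}$, so $\bar F$ is star-realizable with $\W(\bar F)$ given by Algorithm~\ref{algo:star_methodFT}; appending the fractional-twist correction along the gluing annulus (the minor modification of Algorithm~\ref{algo:word_root}) and collapsing the resulting boundary twists $T_{d_1},T_{d_2}$ into a power of $T_{a_1}$ by means of the generalized star relation (Theorem~\ref{thm:gen_starreln}(i)) and the self-$1$-compatibility of Construction~\ref{cons:k-comp} produces $T_{a_2}\prod_{i=1}^{g-1}(T_{c_i}T_{b_{i+1}})$, exactly as in the worked example preceding the statement. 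That this word has the prescribed degree follows directly from Theorem~\ref{thm:gen_starreln}(i): raising it to the appropriate power returns the prescribed power of $T_{a_1}$, and side-preservation is visible from the construction, since $a_1$ is the core of the gluing annulus and the word fixes each of its two sides. Combining this with the uniqueness of the first step gives $F = T_{a_2}\prod_{i=1}^{g-1}(T_{c_i}T_{b_{i+1}})$ up to conjugacy.

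The main obstacle is the uniqueness step. One must argue that order $4g$, the two-fixed-point angle condition, and the root-realizing hypothesis together determine a single conjugacy class, which is precisely where the classification of \cite{KR2} carries the weight; once that is secured, the explicit word drops out routinely from the generalized star relation, as in the preceding example, and the degree and side-preservation checks are direct.
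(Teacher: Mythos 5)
Your proposal is correct, and its computational core is exactly the paper's: both arguments come down to applying the generalized star relation (Theorem~\ref{thm:gen_starreln}, $k=2$ case) to the word $T_{a_2}\prod_{i=1}^{g-1}(T_{c_i}T_{b_{i+1}})$, using the fact that $2g-1$ is its own inverse modulo $4g$ so that the two boundary twists collapse to $T_{a_1}^{1+(2g-1)}=T_{a_1}^{2g}$ once the two boundary components are joined by the twisted annulus containing $a_1$. The genuine difference is that the paper's proof stops there: it only \emph{verifies} that the displayed word is a side-preserving fractional root of degree $(2g,4g)$, and leaves entirely implicit the uniqueness (up to conjugacy) needed to conclude that an \emph{arbitrary} $F$ with the stated property equals this word. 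You isolate that uniqueness as the crux, reduce it to uniqueness of the associated periodic map $\bar{F}$, pin down its data set as $(4g,0;(1,2),(1,4g),(2g-1,4g))$ from the order, genus, and angle constraints, and invoke Proposition~\ref{prop:ds-action} together with the classification in~\cite{KR2}. That step does lean on the external classification to rule out competing signatures (a point you rightly flag as the main obstacle, and which could be tightened by a direct Riemann--Hurwitz count showing an order-$4g$ action admits essentially only this signature), but it addresses a logical gap the paper's own proof does not: what you buy is a proof of the proposition as literally stated, whereas the paper proves only the existence half.
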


\begin{proof}
Any root $F$ of $T_{a_1}$ of degree $(2g-2,4g-4)$ is realized from an $\bar{F} \in \Mod(S_{g-1})$ by attaching an annulus (with a $2 \pi m/n $-twist) connecting the two boundary components ($d_1$ and $d_2$) of the subsurface $S_{g-1}^2$. In particular, we consider the case when $D_{\bar{F}} = (4g-4,0;(1,2),(1,4g-4),(2g-3,4g-4))$. By Theorem~\ref{thm:gen_starreln} (for $k=2$) and Lemma~\ref{lem:per_comps}, we have
$$ (T_{a_2}\prod_{i=1}^{g-1}(T_{c_i}T_{b_{i+1}}))^{4g-4}= T_{a_1}T_{a_1}^{{(2g-3)^+}}.$$ But, as $(2g-3)^+\equiv 2g-3 \pmod {4g-4}$, this further simplifies to
$$ (T_{a_2}\prod_{i=1}^{g-1}(T_{c_i}T_{b_{i+1}}))^{4g-4}= T_{a_1}^{2g-2},$$ from which our assertion follows.
\end{proof}

\subsubsection{Dehn twists about separating curves} Let $c$ be a separating curve in $S_g$ so that $S_g = S_{g_1} \#_{c} S_{g_2}$. A pair $(F_1,F_2)$ of periodic mapping classes, where $F_i \in \Mod(S_{g_i})$, is said to be \textit{degree n root-realizing} if $\text{lcm}(|F_1|,|F_2|) = n$ and the actions of $\langle \F_i \rangle$ on the $S_{g_i}$ have distinguished fixed points $P_i \in S_{g_i}$ where the induced local rotation angles add up to $2\pi/n$ modulo $2\pi$. It was shown in~\cite{KR1} that root $F\in \Mod(S_g)$ of $T_c$ of degree $n$ corresponds to a degree $n$ root-realizing pair $(F_1,F_2)$ of periodic mapping classes. Following the theory in \cite{KR1}, we consider the degree $4g_2+2$ root realizing pair $(F_1,F_2)$, where $F_1 \in \Mod(S_{g_1})$ is the hyperelliptic involution with $D_{F_1}=(2,0;((1,2),2g_1+2))$ and $F_2 \in \Mod(S_{g_2})$ with $D_{F_2}=(2g_2+1,0;(2,2g_2+1),(2,2g_2+1),(2g_2-3,2g_2+1)).$ We remove invariant disks around the fixed points corresponding to the pairs $(1,2)$ in $D_{F_1}$ and $(2,2g_2+1)$ in $D_{F_2}$ induced by the respective actions on the $S_{g_i}$ and then attach an annulus with a $1/(4g_2+2)$-twist (connecting the resultant boundary components) to realize a root $F$ of $T_c$ in $\Mod(S_{g})$. The following proposition provides a factorization of this root into Dehn twists.

\begin{prop}\label{thm:root_sep}
For $g \geq 2$, let $c$ be separating curve in $S_g$ so that $S_g = S_{g_1}\#_cS_{g_2}$. Then there exists a root $F \in \Mod(S_g)$ of $T_c$ of degree $4g_2+2$ such that
$$\W(F)=(\prod_{j=1}^{g_1}(T_{a_j}T_{b_j}T_{a_j'})^{2(-1)^{(g_1-j)}})(T_{a_{g_1+1}}^2(\prod_{i=g_1+1}^{g-1}T_{b_i}T_{c_i})T_{b_g}T_{a_g})^{g_2+1}T_{c}^{-1}.$$ 
\end{prop}

\begin{proof}
By applying Algorithm~\ref{alg:inv}, we have
\begin{equation}
\label{eqn:1}
\left(\prod_{j=1}^{g_1}(T_{a_j}T_{b_j}T_{a_j'})^{2(-1)^{(g_1-j)}}\right)^{2(2g_2+1)}=T_{c}^{2g_2+1},
\end{equation}
by Algorithm~\ref{algo:chain_method}, we get
\begin{equation} 
\label{eqn:2}
((T_{a_{g_1+1}}^2(\prod_{i=g_1+1}^{g-1}T_{b_i}T_{c_i})T_{b_g}T_{a_g})^{(2g_2+1)})^{2(g_2+1)}=T_{c}^{2g_2+2}.
\end{equation}
From Equations~(\ref{eqn:1}) and~(\ref{eqn:2}) above, we have
	$$(\prod_{j=1}^{g_1}(T_{a_j}T_{b_j}T_{a_j'})^{2(-1)^{(g_1-j)}})^{2(2g_2+1)}(T_{a_{g_1+1}}^2(\prod_{i=g_1+1}^{g-1}T_{b_i}T_{c_i})T_{b_g}T_{a_g})^{2(2g_2+1)(g_2+1)}=T_{c}^{4g_2+3},$$ 
	from which our assertion follows.
\end{proof}

\subsubsection{Dehn twist about a multicurve} A \textit{multicurve} $\C$ in $S_g$ is a finite collection of disjoint nonisotopic essential simple closed curves in $S_g$. Let $\C = \{c_1,\ldots,c_n\}$ be a multicurve in $S_g$ then a Dehn twist $T_{\C}$ about $\C$ is defined by $T_{\C} := \prod_{i=1}^nT_{c_i}$. (This is well-defined as Dehn twists about disjoint nonisotpic curves in $S_g$ commute in $\Mod(S_g)$.) The roots of $T_{\C}$ have been classified in~\cite{KP}. We will now provide an explicit factorization of a root $F$ of $T_{\C}$ of degree $g$ when $\C = \{a_1,\ldots,a_g\}$, where $a_i$ are the curves indicated in Figure~\ref{fig:2_lick_gens}. Consider a $G \in \Mod(S_g)$ of order $g$ with $D_F=(g,1;(1,g),(g-1,g))$. Then by Algorithm~\ref{algo:surf_rotn}, we can derive an expression for $\W(G)$. From the theory in~\cite{KP}, it now follows that $(\W(G)T_{a_1})^g=\prod_{i=1}^gT_{a_i}$. Thus, we have the following proposition.
\begin{prop}
\label{prop:root_of_multicurve}
For $g \geq 2$, consider the multicurve $\C= \{a_1,\ldots,a_g\}$ in $S_g$. Then there exists a root $F$ of $T_{\C}$ of degree $g$ such that 
$$\W(F) = (\W(G)T_{a_1})^g,$$ where $D_G=(g,1;(1,g),(g-1,g))$.
\end{prop}

\noindent We end this subsection with a derivation for $\W(F)$ (from Proposition~\ref{prop:root_of_multicurve}) when $g=4$. 

\begin{exmp}
Consider the rotational mapping class $G \in \Mod(S_4)$ with $D_G = (4,1;(1,4),(3,4))$. By Algorithms~\ref{alg:inv}~and~\ref{algo:surf_rotn}, we have that $\W(G)=\W(G_1)\W(G_2)$, where 
$\W(G_1)=(T_{a_1}T_{b_1}T_{a_1'})^2(T_{a_4}T_{b_4}T_{y}T_{a_2}T_{b_2})^{-3}(T_{a_3}T_{b_3}T_{a_3'})^2$ (see Figure~\ref{fig:firstinv}), and $\W(G_2)=(T_{a_2}T_{b_2}T_{x_1}T_{a_1}T_{b_1})^{3}(T_{a_4}T_{b_4}T_{x_3}T_{a_3}T_{b_3})^{-3}$ (see Figure~\ref{fig:secondinv}) are involutions.
\begin{figure}[H]
	\centering
	\begin{subfigure}{.35\textwidth}
		\centering
		\includegraphics[width=.7\textwidth]{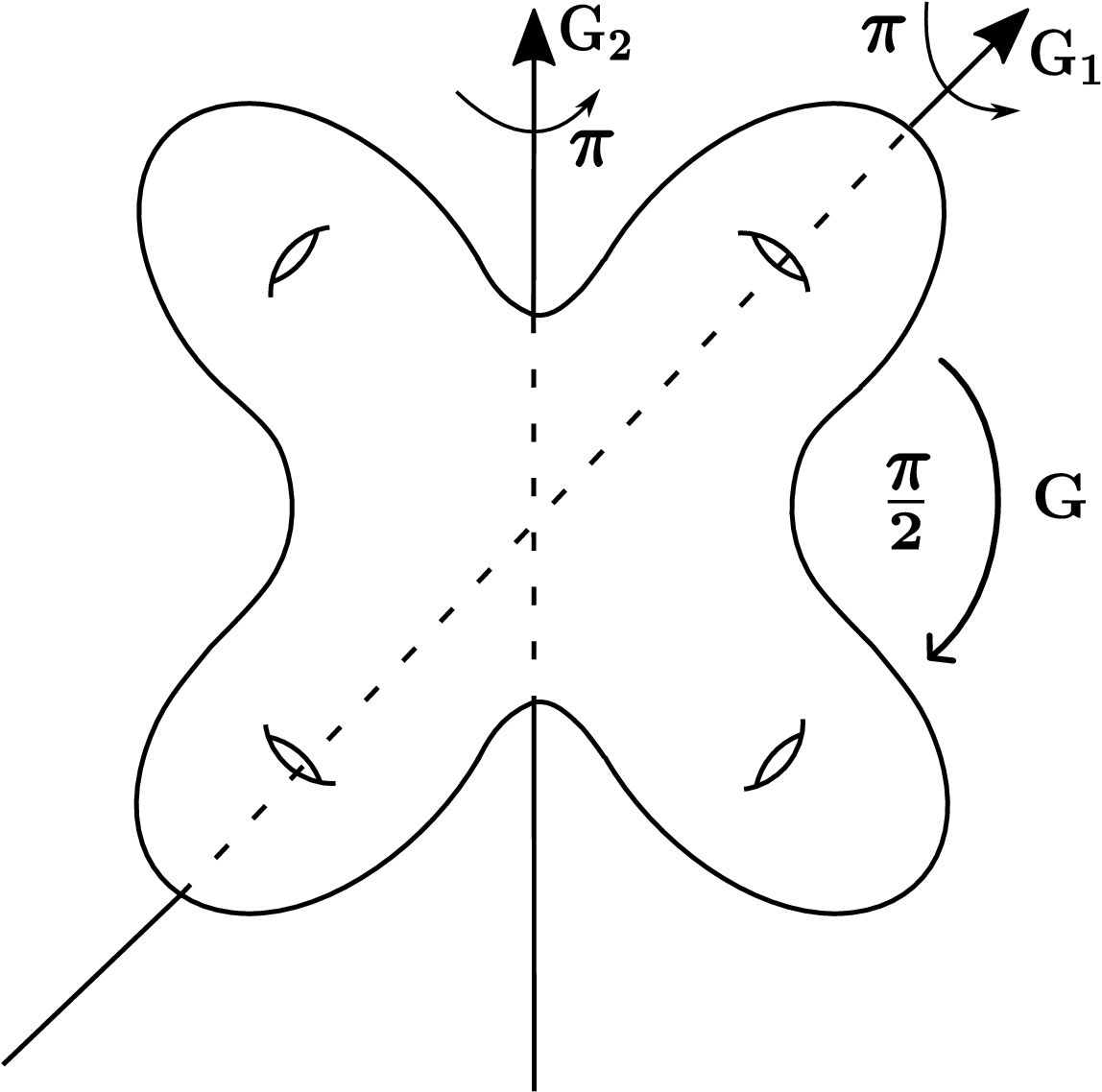}
		\caption{The periodic maps $G$, $G_1$, and $G_2$.}
		\label{fig:rootorder4}
	\end{subfigure}%
	\begin{subfigure}{.35\textwidth}
		\centering
		\includegraphics[width=\textwidth]{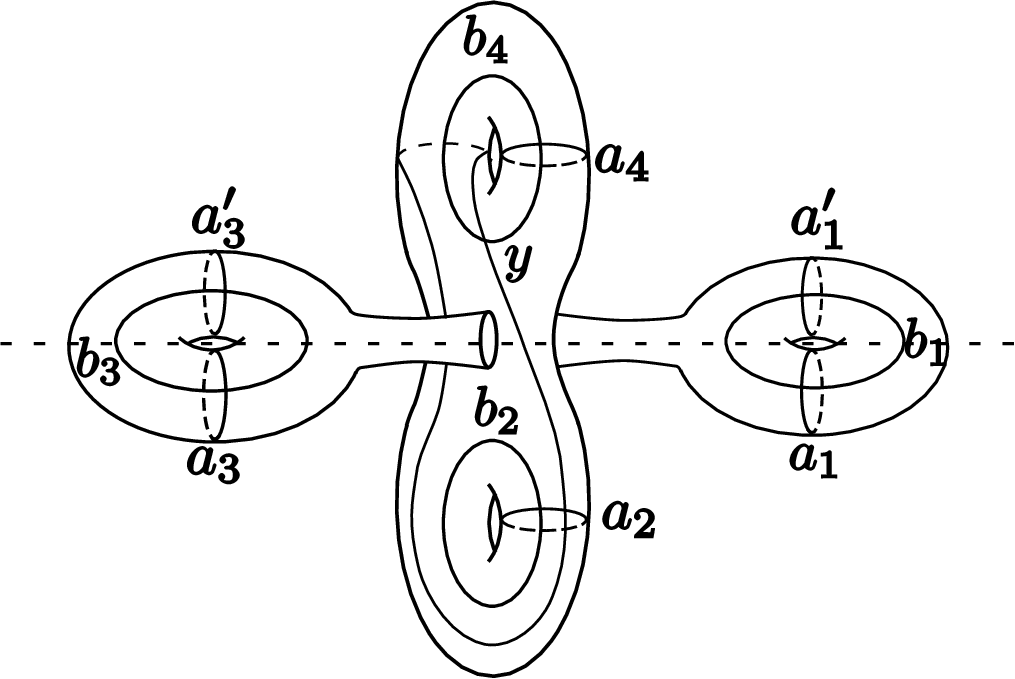}
		\caption{The involution $G_1$.}
		\label{fig:firstinv}
	\end{subfigure}%
	\begin{subfigure}{.35\textwidth}
		\centering
		\includegraphics[width=.68\textwidth]{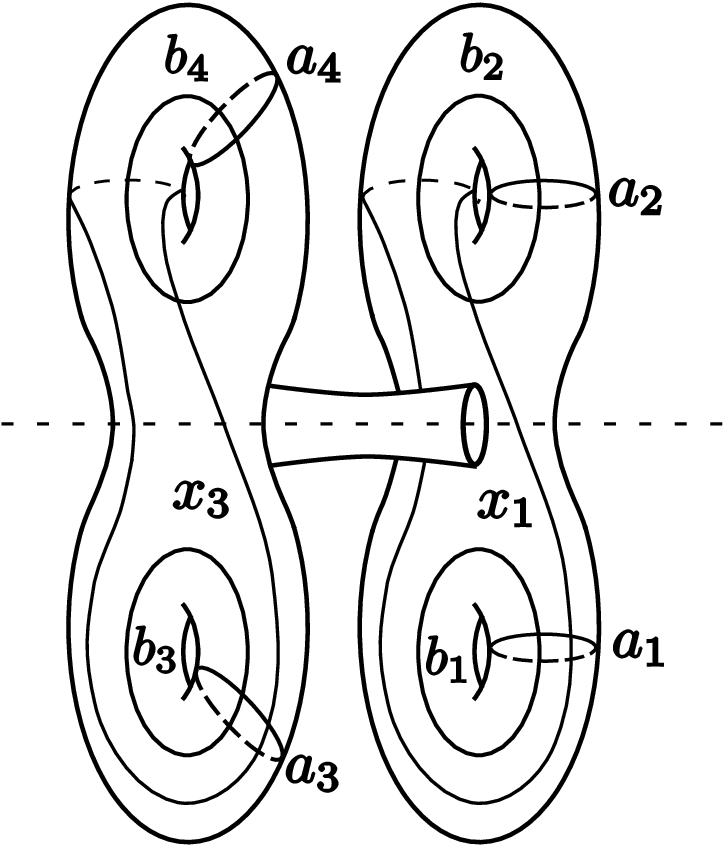}
		\caption{The involution $G_2$.}
		\label{fig:secondinv}
	\end{subfigure}
	\caption{Factorization of a rotation into a product of two involutions.}
\end{figure}

\noindent Thus, by Proposition~\ref{prop:root_of_multicurve}, we have $(\W(G)T_{a_1})^4=T_{a_1}T_{a_2}T_{a_3}T_{a_4}$.
\end{exmp}

\subsection{A construction of pseudo-Anosov mapping classes}
\label{sec:prod_irr_per}
In this subsection, we show that for $g \geq 2$, there exists conjugates of the periodic mapping classes $W_{g,4g}$ and $W_{g,4g+2}$ (from Definition~\ref{defn:star_real}) whose product is pseudo-Anosov. In this connection, we will use the following symplectic (sufficient) criterion~\cite[Proposition 2]{MS1} for a given mapping class to be pseudo-Anosov (originally due to Casson-Bleiler~\cite{CB}).
\begin{theorem}
	\label{thm:symp_crit}
	Let $F \in \Mod(S_g)$ and let $P_F(x)$ be the characteristic polynomial of $\Psi(F)$. Suppose that each of the following conditions hold. 
	\begin{enumerate}[(i)]
		\item $P_F(x)$ is symplectically irreducible over $\Z$.
		\item $P_F(x)$ is not a cyclotomic polynomial.
		\item $P_F(x)$ is not a polynomial in $x^k$ for any $k>1$.
	\end{enumerate}
	Then $F$ is pseudo-Anosov.
\end{theorem}

We now consider the conjugates 
$$W_{4g+2}'=(T_{a_1}\prod_{i=1}^{g-1}(T_{b_i}T_{c_i})T_{b_g}) \text{ and } 
W_{4g}'=(T_{b_1}^2\prod_{i=1}^{g-1}(T_{c_i}T_{b_{i+1}})T_{a_g})$$ of $W_{g,4g+2}$ and $W_{g,4g}$, respectively. (It is not hard to check that these are indeed conjugates). Let $W = W_{4g}'W_{4g+2}'$. A direct computation shows that $$P_W(x) = x^{2g}+2x^{2g-1}+3x^{2g-2}+ \cdots+ gx^{g+1}+(g+3)x^{g}+gx^{g-1}+\cdots+2x+1.$$ We will require the following technical lemma. 

\begin{lemma}
	\label{lem:root_of_unity}
	No complex root of unity can be a root of $P_W(x)$. 
\end{lemma}

\begin{proof}
	It is apparent that $x=\pm 1$ is not a root of $P_W(x).$ Suppose that an $n^\text{th}$ root of unity $e^{\iota\theta}$ for some $\theta\in \mathbb{R}$ and $n\geq 3$, is a root of $P_W(x)$. Then we have $P_W(e^{\iota\theta}) = 0$, which yields the following two equations:
	\begin{gather*}
	\sum_{j=0}^{2g}\cos(j\theta)+\sum_{j=1}^{2g-1}\cos(j\theta)+\cdots+\sum_{j=g-1}^{g+1}\cos(j\theta)+3\cos(g\theta)=0 \\
	\sum_{j=0}^{2g}\sin(j\theta)+\sum_{j=1}^{2g-1}\sin(j\theta)+\cdots+\sum_{j=g-1}^{g+1}\sin(j\theta)+3\sin(g\theta)=0
	\end{gather*}
	Applying the formulas 
	\begin{gather*}
	\sum_{j=0}^{n-1}\cos(\alpha+j\beta)=\frac{\cos(\alpha+(n-1)\beta/2)\sin (n\beta/2)}{\sin(\beta/2)} \text{ and} \\
	\sum_{j=0}^{n-1}\sin(\alpha+j\beta)=\frac{\sin(\alpha+(n-1)\beta/2)\sin (n\beta/2)}{\sin(\beta/2)}
	\end{gather*}
	in the pair of equations above, we obtain the pair of equations  
	\begin{gather*}
	\cos(g\theta)(\sin^2((g+1)\theta/2)+2\sin^2(\theta/2))=0 \\
	\sin(g\theta)(\sin^2((g+1)\theta/2)+2\sin^2(\theta/2))=0
	\end{gather*}
	These equations yield a contradiction, as there does not exist any $\theta$ such that $\cos(g\theta)=0=\sin(g\theta)$, from which our assertion follows.
\end{proof}

\noindent This leads us to the following proposition. 

\begin{prop}
	For $g \geq 2$, there exists conjugates $W_{4g}'$ and $W_{4g+2}'$ of $W_{g,4g}$ and $W_{g,4g+2}$, respectively, such that $W= W_{4g}'W_{4g+2}'$ is pseudo-Anosov in $\Mod(S_g)$.
\end{prop}

\begin{proof}
It is apparent that $P_W(x)$ satisfies condition $(iii)$ of Theorem~\ref{thm:symp_crit}. Further, condition (ii) of Theorem~\ref{thm:symp_crit} holds true in view of Lemma~\ref{lem:root_of_unity}. Finally, to show condition (i) it suffices to show that $W$ does not preserve any subsurface of $S_g$ with genus greater than $0$. To show this, we consider the chain of simple closed curves $C = \{b_1,c_1,b_2,c_2,\cdots,c_{g-1},b_g,a_g\}$ in $S_g$ (as indicated in Figure~\ref{fig:2_lick_gens}). For simplicity, we relabel the curves (appearing in sequence) in $C$ by $\{\alpha_1,\cdots,\alpha_{2g}\}$. By the properties of chain maps, we have $$W(\alpha_i)=\alpha_{i+2}, \text{ for } 1 \leq i < 2g-1$$ and further it is easily seen that there exists a $k$ such that each component of $\overline{S_g \setminus \cup_{i=0}^k W^{i}(\alpha_j)}$ has genus $0$. Consequently, it follows that $W$ cannot preserve a subsurface of $S_g$ of genus greater than $0$, from which our assertion follows.
\end{proof}

\begin{rem}
For $g \geq 2$, let $\lambda_g$ denote the homological dilatation of $W_{4g}'W_{4g+2}'$. From our computations in Mathematica 11~\cite{W1}, we know that $\lambda_3 \approx 1.81$ and it appears that $\{\lambda_g\}$ is a strictly decreasing sequence with $\lambda_{25} < 1.1$. This prompts us to conjecture that the homological dilatation of $W_{4g}'W_{4g+2}'$ converges to 1 as $g \to \infty$.
\end{rem}

\section{Acknowledgements} The authors would like to express their gratitude to the journal referee(s) for several constructive comments and suggestions that have significantly improved the exposition of the paper. The first and the second authors were partly supported during this work by the UGC-JRF and the NBHM PhD fellowships, respectively.  

\bibliographystyle{plain} 
\bibliography{periodicwords}
\end{document}